\tikzstyle{vertex}=[circle,draw=black,fill=black,inner sep=0,minimum size=5pt,text=white,font=\footnotesize]
\theoremstyle{plain}
\newtheorem{theorem}{Theorem}[section]
\newtheorem{claim}[theorem]{Claim}
\newtheorem{lemma}[theorem]{Lemma}
\theoremstyle{definition}
\newcommand{\precdot}{\prec\mathrel{\mkern-5mu}\mathrel{\cdot}}
\title{Dedekind's problem in the hypergrid}
\author{Victor Falgas-Ravry, Eero R\"aty, Istv\'an Tomon\thanks{Ume\r{a} University, \emph{e-mail}: \textbf{\{victor.falgas-ravry,eero.raty,istvan.tomon\}@umu.se}}}
\date{}
\begin{document}
	\sloppy 
	
	\maketitle

\begin{abstract}
Consider the partially ordered set on $[t]^n:=\{0,\dots,t-1\}^n$ equipped with the natural coordinate-wise ordering. Let $A(t,n)$ denote the number of antichains of this poset. The quantity $A(t,n)$ has a number of combinatorial interpretations: it is precisely the number of $(n-1)$-dimensional partitions with entries from $\{0,\dots,t\}$, and by a result of Moshkovitz and Shapira, $A(t,n)+1$ is equal to the $n$-color Ramsey number of monotone paths of length $t$ in 3-uniform hypergraphs. This has led to significant interest in the growth rate of $A(t,n)$.

A number of results in the literature show that $\log_2 A(t,n)=(1+o(1))\cdot \alpha(t,n)$, where $\alpha(t,n)$ is the width of $[t]^n$, and the $o(1)$ term goes to $0$ for $t$ fixed and $n$ tending to infinity. In the present paper, we prove the first bound that is close to optimal in the case where $t$ is arbitrarily large compared to $n$, as well as improve all previous results for sufficiently large $n$. In particular, we prove that there is an absolute constant $c$ such that for every $t,n\geq 2$,
$$\log_2 A(t,n)\leq \left(1+c\cdot \frac{(\log n)^3}{n}\right)\cdot \alpha(t,n).$$
This resolves a conjecture of Moshkovitz and Shapira. A key ingredient in our proof is the construction of a normalized matching flow on the cover graph of the poset $[t]^n$ in which the distribution of weights is close to uniform, a result that may be of independent interest.
\end{abstract}

\section{Introduction}
\subsection{Dedekind's problem in the Boolean lattice}
Dedekind's problem \cite{dedekind} from 1897 is one of the oldest problems in enumerative combinatorics, and in combinatorics more generally. Simply stated, the problem is to determine the number of monotone functions $f:\{0,1\}^n\rightarrow \{0,1\}$, a matter of importance in computer science. Dedekind's problem turns out to be equivalent to a counting problem in the Boolean lattice, which we describe below.

Let $P$ be a partially ordered set, or \emph{poset}. An \emph{antichain} in $P$ is a set of pairwise incomparable elements. Let $\alpha(P)$ denote the size of a largest antichain in $P$, also known as the \emph{width} of $P$, and let $A(P)$ denote the number of antichains in $P$. Since any subset of an antichain is also an antichain, we have the trivial lower bound
\begin{align}\label{eq: trivial lb on A in terms of width} A(P)\geq 2^{\alpha(P)}.\end{align}
The \emph{Boolean lattice} $B_n$ is the poset on the subsets of $\{1,\dots,n\}$, whose partial order is given by the subset relation. As there is a simple bijection between monotone functions $f:\{0,1\}^n\rightarrow \{0,1\}$ and antichains in $B_n$, the answer to Dedekind's problem is precisely $A(B_n)$. But how large is $A(B_n)$?

By Sperner's classical theorem \cite{sperner}, the width of $B_n$ is $\binom{n}{\lfloor n/2\rfloor}$, and the trivial lower bound~\eqref{eq: trivial lb on A in terms of width} gives $A(B_n)\geq \exp_2(\binom{n}{\lfloor n/2\rfloor})$. After seventy years of developments, Kleitman \cite{kleitman69} and subsequently Kleitman and Markowsky \cite{KM75} proved that this simple lower bound is close to optimal, showing that
$$\log_2 A(B_n)\leq \left(1+O\left(\frac{\log n}{n}\right)\right)\cdot\binom{n}{\lfloor n/2\rfloor}.$$
A few years later, Korshunov \cite{korshunov80} determined $A(B_n)$ up to a factor of $(1+o(1))$,  solving Dedekind's problem asymptotically. Such is the theoretical importance of the problem, however, that this was not the end of the story. In 1989, Saphozenko \cite{saphozenko89} gave an alternative proof of Korshunov's bound. Just over a decade later, Kahn~\cite{kahn02} used an entropy argument to give a new proof of the aforementioned bound of Kleitman and Markowsky, while in 2016, Balogh, Treglown and Wagner~\cite{BTW} gave an alternative proof of the same result using the graph container method.

\subsection{Dedekind's problem in the hypergrid}
In this paper, we consider a natural generalization of Dedekind's problem on the \emph{hypergrid}, the poset on $[t]^n$ with the natural coordinate-wise partial order. More precisely, we denote by $[t]$ the set $\{0,\dots,t-1\}$, and for $x,y\in [t]^n$, we write $x\preceq y$ if $x(i)\leq y(i)$ for every $i\in \{1,\dots,n\}$. The Boolean lattice $B_n$ is clearly isomorphic to the poset thus defined on $[2]^n$. For ease of notation, let us write $\alpha(t,n):=\alpha([t]^{n})$ and $A(t,n):=A([t]^n)$. Dedekind's problem in the hypegrid setting is to determine the value of $A(t,n)$

In the past fifteen years, the order of magnitude of $A(t,n)$ has been extensively studied due to its connection to problems in enumerative and extremal combinatorics, which we outline in Subsection \ref{sect:erdos_szekeres} below. The exact value of $A(t,n)$ for arbitrary $t\geq 2$ is only known for $n\in \{1,2,3\}$. Trivially we have $A(t,1)=t+1$, and it is not hard to show that  $A(t,2)=\binom{2t}{t}$ 
(see the proof of Lemma~\ref{lemma:2D}). The case $n=3$ however is highly non-trivial, and is a celebrated theorem  of MacMahon on plane partitions which is discussed in Subsection \ref{sect:erdos_szekeres}.

What about asymptotic estimates of $A(t,n)$ for $t,n\geq 2$? Again, the trivial bound~\eqref{eq: trivial lb on A in terms of width} gives $A(t,n)\geq \exp_2(\alpha(t,n))$, and it is natural to ask whether $A(t,n)$ is close to this lower bound. The asymptotic value of $\alpha(t,n)$ was determined by Anderson~\cite{anderson67}, who showed $$\alpha(t,n)=(1+o(1))\sqrt{\frac{6}{\pi(t^2-1)n}}\cdot t^{n},$$ where $o(1)\rightarrow 0$ as $n\rightarrow\infty$ (independently of $t$). In 2009, Carroll, Cooper, and Tetali \cite{CCT09} proved that $A(t,n)$ is indeed close to the trivial lower bound if $n$ is sufficiently large with respect to $t$. They followed the entropy method of Pippenger \cite{pippenger} to show that if $t\leq n$, then
$$\log_2 A(t,n)\leq \left(1+\frac{11t^2\log t (\log n)^{3/2}}{n^{1/4}}\right)\cdot\alpha(t,n).$$
In the special case $t=3$, Noel, Scott, and Sudakov~\cite{NSS}  used the graph container method to prove   $\log_2 A(3,n)\leq (1+O\left(\sqrt{\frac{\log n}{n}}\right))\cdot \alpha(3,n)$. Subsequently, by a short argument based on chain decompositions, Tsai \cite{tsai} showed that 
$$\log_2 A(t,n)\leq \alpha(t,n)\cdot \log_2 (t+1),$$
which beats the bound of Carroll, Cooper, and Tetali for large $t$. Recently, Pohoata and Zakharov~\cite{PZ21} refined the ideas of \cite{BTW} and \cite{NSS} and used the container method to show that there exists some constant $C_t$ depending only on $t$ such that 
$$\log_2 A(t,n)\leq \left(1+C_t\cdot \frac{\log n}{n^{1/2}}\right)\cdot\alpha(t,n).$$
Even more recently, Park, Sarantis, and Tetali \cite{PST23} employed an entropy based approach to prove that if $t=O(n/\log n)$, then 
$$\log_2 A(t,n)\leq\left(1+ \left(\frac{t(\log n)^3}{n}\right)^{1/2}\right)\cdot\alpha(t,n),$$
and also that $\log_2 A(3,n)\leq (1+O(\frac{1}{n}))\cdot \alpha(3,n)$.

However none of these upper bounds on $\log_2 A(t,n)$ have the right order of magnitude as a function of $t$, and the following simple argument, pointed out by Moshkovitz and Shapira in~\cite{MS12}, beats all of them for $t$ sufficiently large with respect to $n$:  observe the inequality $$A(n,t)\leq A(n-1,t)^{t},$$ which holds since if $A\subset [t]^n$ is an antichain, then for every $i\in [t]$ the slice $A_i=A\cap \{x\in [t]^n:x(n)=i\}$ is isomorphic to an antichain in $[t]^{n-1}$. Therefore, using $A(2,t)=\binom{2t}{t}\leq 4^t$ as the base case, we get by induction that $\log_2 A(n,t)\leq 2t^{n-1}=O(\alpha(n,t)\sqrt{n})$. As an intermediary step in the proof of our main result, we give  a short proof of the sharper bound $\log_2 A(n,t)=O(\alpha(n,t))$ in Theorem~\ref{thm:weak}, which is based on a result of Tomon~\cite{tomon15} about uniform chain decompositions. However, our main result goes even further, and is not only close to optimal for large $t$, but also beats all the previously mentioned bounds for all $n$ sufficiently large.

\begin{theorem}\label{thm:main}
    There exists a constant $c>0$ such that for every $n,t\geq 2$, we have
    $$\log_2 A(t,n)\leq \left(1+c\cdot \frac{(\log n)^3}{n}\right)\cdot \alpha(t,n).$$
\end{theorem}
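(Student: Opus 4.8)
The plan is to bound the number of antichains in $[t]^n$ using an entropy-based container-type argument built around a carefully chosen flow on the cover graph. Recall that an antichain $A$ is determined by its up-set $U(A)=\{x: x\succeq a \text{ for some } a\in A\}$, equivalently by the monotone boundary between $U(A)$ and its complement. The standard approach (going back to Kleitman--Markowsky and Kahn) is to sample a uniformly random antichain $A$, and to bound $H(A)$ — the entropy of $A$ — by splitting the hypergrid into levels $L_k=\{x: \sum_i x(i)=k\}$ and analyzing, level by level, how much information is needed to locate the ``middle'' part of the boundary. The width $\alpha(t,n)$ is (asymptotically) the size of the middle levels, so the crux is to show that the total entropy contributed by all levels exceeds $\alpha(t,n)$ only by a multiplicative factor $1+O((\log n)^3/n)$.

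First I would set up the level decomposition of $[t]^n$ and record the basic facts: the poset is graded with rank function $x\mapsto \sum_i x(i)$, it has the normalized matching property (LYM), and hence the cover graph carries a \emph{normalized matching flow} — a system of nonnegative weights on the cover-graph edges whose restriction to any two consecutive levels is, after normalization, doubly stochastic in the appropriate sense. The key new ingredient promised in the abstract is a normalized matching flow on $[t]^n$ in which the weight is spread nearly uniformly across edges; I would assume this as an available tool (it is the ``result of independent interest'' flagged in the introduction). Using such a flow one can, for a random antichain $A$ with boundary $\partial$, charge each boundary edge to its two endpoints in a balanced way, so that the expected number of boundary edges incident to any given level is controlled in terms of the level sizes rather than blowing up near the extreme levels.

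The main steps, in order: (1) reduce from counting antichains to bounding $H(A)$ for $A$ uniform, and from there to bounding $\sum_k H(A\cap L_k \mid \text{coarser data})$; (2) split the levels into a ``central band'' $B$ consisting of the $O(n\log n/\text{something})$ middle levels — more precisely the band where $|L_k|$ is within a constant factor of the maximum $\alpha(t,n)$ — and the ``tails''; (3) on the tails, use the crude but cheap bound that the number of antichains supported near level $k$ is at most $2^{|L_k|}$ summed appropriately, and show via Anderson's estimate for $\alpha(t,n)$ and Gaussian-type tail bounds on the level sizes that the tails contribute only $o((\log n)^3/n)\cdot \alpha(t,n)$ to the exponent; (4) on the central band, run the flow/entropy argument: use the near-uniform normalized matching flow to show that conditioned on the antichain's trace on a boundary level, the ``free'' portion on adjacent levels is exponentially smaller, gaining a factor roughly $(1 - \Omega(1/n))$ per level across a band of width $O(\text{poly}\log n)$, and combine this with the base case $\log_2 A(t,2)=O(t)$ and the slicing inequality $A(t,n)\le A(t,n-1)^t$ where convenient; (5) use Theorem~\ref{thm:weak}, i.e. $\log_2 A(t,n)=O(\alpha(t,n))$, to absorb all lower-order terms with the correct dependence on $t$, so that the final bound is genuinely of the form $(1+c(\log n)^3/n)\alpha(t,n)$ uniformly in $t$.

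The main obstacle, I expect, is step (4) together with the construction of the near-uniform flow: the normalized matching property alone gives \emph{a} flow, but to get the $(\log n)^3/n$ savings one needs the flow weights to be close to uniform on \emph{every} level, which fails for the naive flow near the boundary of the hypergrid (where the grading is very skewed because of the $\{0,t-1\}^n$ corners). Constructing a flow that simultaneously (a) satisfies the normalized matching constraints and (b) has near-uniform marginals on edges requires a delicate averaging or a fractional-matching/transportation argument on the cover graph of $[t]^n$, and pushing this through is where the real work lies; the rest is a (by now fairly standard) entropy bookkeeping exercise, with the twist that all estimates must be uniform in $t$, which is exactly what Theorem~\ref{thm:weak} and Anderson's width asymptotics are there to guarantee.
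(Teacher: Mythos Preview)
Your proposal diverges from the paper's proof in a way that leaves a genuine gap. The paper does \emph{not} run an entropy argument in the Kahn/Pippenger/Park--Sarantis--Tetali style; it uses the graph container method. The near-uniform normalized matching flow is not used to control conditional entropies level by level as you suggest in step (4). Instead, it induces a regular chain cover $\phi$ with the property that for comparable $x\prec y$ at distance $\ge k$ one has $\mathbb{P}(x,y\in\mathbf{C})\le k!W^k/N(i)$ with $W=O((\log n)/n)$. This yields sharp \emph{supersaturation} estimates: any $A\subseteq T$ with weight $w(A)\ge k+\delta$ has $\Delta(A)\ge \delta W^{-k}/(k!(k+\delta))$. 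These are fed into a Kleitman--Winston style container algorithm (run in three phases, using $k=2$ then $k=1$) to produce fingerprints $S$ of size $O(\alpha(t,n)(\log n)^2/n)$ and containers of size $(1+1/n)\alpha(t,n)$.

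Your step (4) is where the gap lies. The claim that ``conditioned on the antichain's trace on a boundary level, the free portion on adjacent levels is exponentially smaller, gaining a factor roughly $(1-\Omega(1/n))$ per level'' is asserted without mechanism, and existing entropy implementations (notably Park--Sarantis--Tetali) only achieve error $(t(\log n)^3/n)^{1/2}$, which is \emph{not} uniform in $t$; invoking Theorem~\ref{thm:weak} at the end does not repair this, since the $t$-dependence enters multiplicatively in the main term, not as a lower-order correction. You also omit an ingredient with no analogue in your outline: a separate count of \emph{small} antichains (the paper's Theorem~\ref{thm:small_antichains}), proved via a partition of $[t]^n$ into $\Theta(t^{n-2}/n)$ rectangles of side $\Theta(t\sqrt{n})$, which is what bounds the number of fingerprints and closes the container argument. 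Without a supersaturation-to-container pipeline or a concrete substitute for it, your sketch does not reach the stated bound.
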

\noindent We point out that an error term depending on $n$ is necessary. Indeed, in the concluding remarks of this paper we show the lower bound
$$\log_2 A(t,n)\geq (1+2^{-O(n)})\cdot \alpha(t,n).$$
Our proof of Theorem \ref{thm:main} follows in the footsteps of \cite{NSS,PZ21} and is based on the container method. However, we introduce a number of new ideas as well. We give a brief outline of our approach in Section \ref{sect:outline}.

\subsection{High-dimensional partitions and monotone paths}\label{sect:erdos_szekeres}

Let us discuss some corollaries of our main theorem. The study of integer partitions has a long and distinguished history in mathematics, but to keep our discussion concise, we refer an interested reader to~\cite{andrews} as a general reference.  A \emph{$d$-dimensional partition} is a  $d$-dimensional $m\times\dots\times m$-sized non-negative integer array $T$ such that
$$T(a_1,\dots,a_{i},\dots,a_{d})\geq T(a_1,\dots,a_i+1,\dots,a_d)$$
for all indices $a_1,\dots,a_d$ and $i\in \{1,\dots,d\}$. This is also known as a \emph{line partition} (essentially a Young diagram) in the case $d=1$, and as a \emph{plane partition} in the case $d=2$.  Let $P_d(m)$ be the number of $d$-dimensional $m\times \dots\times m$ sized partitions with entries from $\{0,\dots,m\}$. It is a simple exercise to show that $P_1(m)=\binom{2m}{m}$. However, calculating $P_2(m)$ is already highly non-trivial, and it is a celebrated result of MacMahon \cite{macmahon} that
$$P_2(m)=\prod_{1\leq i,j,k\leq m}\frac{i+j+k-1}{i+j+k-2}\approx 2^{1.323\cdot m^2}.$$
Finding similar formulas for $d\geq 3$ is a major open problem in enumerative combinatorics. Nonetheless, as observed by Moshkovitz and Shapira~\cite{MS12}, there is a simple bijection between antichains and high-dimensional partitions, which shows that $P_d(m)=A(m,d+1)$. Therefore, Theorem \ref{thm:main} combined with the estimate on $\alpha(t,n)$ due to Anderson~\cite{anderson67} immediately implies the following.
\begin{theorem}\label{thm:partitions}
    For every $\varepsilon>0$ fixed if $d$ is sufficiently large, then for every $m\geq 2$,
$$(1-\varepsilon) \sqrt{\frac{6}{\pi (m^2-1) d}}\cdot m^{d+1} \leq \log_2 P_d(m)\leq (1+\varepsilon) \sqrt{\frac{6}{\pi (m^2-1) d}}\cdot m^{d+1}.$$
\end{theorem}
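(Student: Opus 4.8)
The plan is to derive Theorem~\ref{thm:partitions} as a direct corollary of Theorem~\ref{thm:main}, the trivial lower bound~\eqref{eq: trivial lb on A in terms of width}, and Anderson's asymptotic formula for the width of the hypergrid, using the bijective identity $P_d(m)=A(m,d+1)$ of Moshkovitz and Shapira. First I would set $t=m$ and $n=d+1$. Then the trivial bound gives $\log_2 P_d(m)=\log_2 A(m,d+1)\geq \alpha(m,d+1)$, while Theorem~\ref{thm:main} gives $\log_2 P_d(m)\leq \left(1+c\,\tfrac{(\log(d+1))^3}{d+1}\right)\alpha(m,d+1)$. The important structural point is that this multiplicative error term depends only on $d$ and not on $m$, so it tends to $1$ uniformly over all $m\geq 2$ as $d\to\infty$.

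Next I would insert Anderson's estimate $\alpha(m,d+1)=(1+o(1))\sqrt{\tfrac{6}{\pi(m^2-1)(d+1)}}\cdot m^{d+1}$, where the $o(1)$ tends to $0$ as $d\to\infty$ independently of $m$. To reconcile the denominator $d+1$ with the denominator $d$ appearing in the statement, I would use the two elementary facts $\sqrt{\tfrac{6}{\pi(m^2-1)(d+1)}}\,m^{d+1}\leq \sqrt{\tfrac{6}{\pi(m^2-1)d}}\,m^{d+1}$ and $\sqrt{\tfrac{6}{\pi(m^2-1)(d+1)}}\,m^{d+1}=\sqrt{\tfrac{d}{d+1}}\cdot\sqrt{\tfrac{6}{\pi(m^2-1)d}}\,m^{d+1}=(1-o(1))\sqrt{\tfrac{6}{\pi(m^2-1)d}}\,m^{d+1}$, both valid for every $m\geq 2$ with error term depending only on $d$. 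Writing $W(m,d):=\sqrt{\tfrac{6}{\pi(m^2-1)d}}\,m^{d+1}$, combining these with the two displays from the previous paragraph yields $\log_2 P_d(m)\geq (1-o(1))W(m,d)$ and $\log_2 P_d(m)\leq (1+o(1))W(m,d)$, where each $o(1)$ is a function of $d$ alone.

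Finally, given $\varepsilon>0$, I would choose $d_0=d_0(\varepsilon)$ large enough that for all $d\geq d_0$ every $(1\pm o(1))$ factor above lies in $(1-\varepsilon,1+\varepsilon)$; since none of these error terms depend on $m$, this single threshold works simultaneously for all $m\geq 2$, producing exactly the claimed inequalities $(1-\varepsilon)W(m,d)\leq \log_2 P_d(m)\leq (1+\varepsilon)W(m,d)$. I do not anticipate a real obstacle, since this is a packaging argument rather than a new result; the only points that require care are (i) tracking that the $o(1)$ in Anderson's theorem and the $(\log n)^3/n$ error in Theorem~\ref{thm:main} are genuinely uniform in $m$ once we substitute $n=d+1$, $t=m$, and (ii) checking that the factor $m^2-1$ in the denominator causes no degeneracy, which it does not because $m\geq 2$.
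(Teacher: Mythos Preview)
Your proposal is correct and follows exactly the approach the paper indicates: the paper simply states that Theorem~\ref{thm:partitions} follows immediately from Theorem~\ref{thm:main} combined with Anderson's estimate (Lemma~\ref{lemma:middle_level}) and the identity $P_d(m)=A(m,d+1)$. Your write-up is in fact more careful than the paper's, since you explicitly track the uniformity in $m$ of all the $o(1)$ terms and reconcile the $d$ versus $d+1$ in the denominator.
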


Moshkovitz and Shapira~\cite{MS12} proved that the number of high-dimensional partitions is closely related to an extremal problem in Ramsey theory. Given an $r$-uniform hypergraph on the labelled vertex set $\{1,\dots,N\}$, a \emph{monotone path} of length $m$ is a sequence $v_1<\dots<v_{m+r-1}$ such that $\{v_{i},\dots,v_{i+r-1}\}$ is an edge for $i=1,\dots,m$. Let $N_r(q,m)$ be the smallest integer $N$ such that any $q$-coloring of the edges of the complete $r$-uniform hypergraph on the vertex set $\{1,\dots,N\}$ contains a monochromatic path of length $m$. The problem of estimating $N_r(q,m)$ for $r\geq 3$ was proposed by Fox, Pach, Sudakov, and Suk \cite{FPSS}. Their motivation was to find a common generalization of the Erd\H{o}s--Szekeres lemma \cite{ESz} on monotone subsequences, which corresponds to $N_2(2,m)$, and the Erd\H{o}s--Szekeres theorem \cite{ESz} on convex subsets of the plane, which corresponds to $N_3(2,m)$.  In \cite{MS12}, the following surprising identity was established, relating high-dimensional partitions and the Ramsey number of monotone paths:
$$N_3(q,m)=P_{q-1}(m)+1.$$
Therefore, by Theorem \ref{thm:partitions}, we immediately get an almost optimal bound for $N_3(q,m)$.

\begin{theorem}
    For every $\varepsilon>0$ if $q$ is sufficiently large, then for every $m\geq 2$, 
$$(1-\varepsilon)\sqrt{\frac{6}{\pi (m^2-1) q}}\cdot m^{q}\leq \log_2 N_3(q,m)\leq (1+\varepsilon)\sqrt{\frac{6}{\pi (m^2-1) q}}\cdot m^{q}.$$

\end{theorem}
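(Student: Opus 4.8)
The plan is to derive this statement directly from Theorem~\ref{thm:partitions} together with the Moshkovitz--Shapira identity $N_3(q,m)=P_{q-1}(m)+1$, so almost all of the work has already been done; the remaining task is purely bookkeeping. First I would substitute $d=q-1$ into Theorem~\ref{thm:partitions}: for every $\varepsilon'>0$, provided $q-1$ (equivalently $q$) is large enough, we have for every $m\geq 2$
\[
(1-\varepsilon')\sqrt{\frac{6}{\pi(m^2-1)(q-1)}}\cdot m^{q}\ \leq\ \log_2 P_{q-1}(m)\ \leq\ (1+\varepsilon')\sqrt{\frac{6}{\pi(m^2-1)(q-1)}}\cdot m^{q}.
\]
Then I would use $\log_2 N_3(q,m)=\log_2\bigl(P_{q-1}(m)+1\bigr)$. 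Since $P_{q-1}(m)\geq 1$, adding $1$ changes the logarithm by at most $1$ (indeed $\log_2 P_{q-1}(m)\leq \log_2 N_3(q,m)\leq \log_2 P_{q-1}(m)+1$), and this additive $1$ is negligible compared with the main term $\Theta\bigl(m^{q}/\sqrt{q}\bigr)$, which tends to infinity with $q$.

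The only genuinely mechanical point is replacing $q-1$ by $q$ inside the square root: we have $\sqrt{1/(q-1)}=\sqrt{q/(q-1)}\cdot\sqrt{1/q}$, and $\sqrt{q/(q-1)}\to 1$ as $q\to\infty$, uniformly in $m$. So for $q$ large the factor $\sqrt{q/(q-1)}$ lies in $(1-\varepsilon'',1+\varepsilon'')$ for any prescribed $\varepsilon''>0$. Combining the three sources of multiplicative slack --- the $(1\pm\varepsilon')$ from Theorem~\ref{thm:partitions}, the $(1\pm\varepsilon'')$ from the index shift, and the $o(1)$ absorption of the additive $+1$ --- and choosing $\varepsilon',\varepsilon''$ small enough (say each at most $\varepsilon/3$) so that the product of the resulting factors lies in $(1-\varepsilon,1+\varepsilon)$, we obtain for all sufficiently large $q$ and all $m\geq 2$
\[
(1-\varepsilon)\sqrt{\frac{6}{\pi(m^2-1)q}}\cdot m^{q}\ \leq\ \log_2 N_3(q,m)\ \leq\ (1+\varepsilon)\sqrt{\frac{6}{\pi(m^2-1)q}}\cdot m^{q},
\]
as required.

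There is essentially no obstacle here: the content is entirely in Theorem~\ref{thm:main} (and hence Theorem~\ref{thm:partitions}), and this corollary is a routine translation. The one thing to be slightly careful about is that all error terms must be shown to be uniform in $m$, not just in $q$; this is automatic because the additive $+1$ is dominated even when $m=2$ (the main term still grows like $2^{q}/\sqrt{q}\to\infty$), and the index-shift factor $\sqrt{q/(q-1)}$ does not depend on $m$ at all. I would state the argument in a couple of sentences rather than as a formal display-heavy proof, since it is a direct consequence of the preceding results.
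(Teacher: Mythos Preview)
Your proposal is correct and follows exactly the paper's approach: the paper simply states that the result follows immediately from Theorem~\ref{thm:partitions} together with the Moshkovitz--Shapira identity $N_3(q,m)=P_{q-1}(m)+1$, without writing out the bookkeeping you carefully spell out. Your handling of the additive $+1$ and the $q-1\to q$ index shift (and the uniformity in $m$) is fine and indeed more detailed than what the paper provides.
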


\subsection{Proof outline}\label{sect:outline}

In this section, we give a brief outline of the proof of Theorem \ref{thm:main}. We follow the ideas of \cite{BTW,NSS,PZ21} and use the graph container method to count the number of antichains in $[t]^{n}$. This method was first used by Kleitman and Winston \cite{KW82}, and further developed by Saphozenko \cite{saphozenko05}; however, we do not assume familiarity with the aforementioned papers on the part of the reader.

Our goal is to show that there is a collection $\mathcal{S}$ of  subsets of $[t]^n$, called \emph{fingerprints}, and a function $\psi:\mathcal{S}\rightarrow 2^{[t]^n}$ with the following properties: 
\begin{enumerate}
    \item  every $S\in\mathcal{S}$ is a small antichain; 
    \item for every $S\in \mathcal{S}$, the set $|\psi(S)|$ is at most a bit larger than $\alpha(t,n)$;
\item every antichain of $[t]^n$ is contained in $S\cup \psi(S)$ for some $S\in \mathcal{S}$. 
\end{enumerate}
The elements of the family $\mathcal{C}=\{S\cup \psi(S):S\in\mathcal{S}\}$ are called \emph{containers}. If such a family $\mathcal{S}$ exists, we can upper bound $A(t,n)$ by counting all subsets of $S\cup\psi(S)$ for every $S\in \mathcal{S}$. To ensure that this is small, we need to show that the family $|\mathcal{S}|$ is itself small. We do this by establishing a separate counting result for small antichains (Theorem~\ref{thm:small_antichains}), whose proof is based on a partition of $[t]^n$ into a collection of `rectangles' of uniform sizes. Combined with property 1. above, this will allow us to bound $\vert \mathcal{S}\vert$  as required.

The key idea underpinning the graph container method is that the existence of a family $\mathcal{S}$ as above is guaranteed by a so-called \emph{supersaturation} result. In our setting, this means showing that subsets of $[t]^n$ larger than $\alpha(t,n)$ contain many pairs of comparable elements, and constitutes the most difficult part of the argument. In order to obtain suitable supersaturation results, we construct a \emph{flow} on the cover graph of $[t]^n$ with certain regularity properties --- this is the main work done in Section~\ref{sect: flows}.  We then use this flow in  Section \ref{sect:chain_covers} to define a probability distribution $\phi$ on the maximal chains of $[t]^n$ such that if $\mathbf{C}$ is a random chain with respect to $\phi$, then the probability $\mathbb{P}(x,y\in \mathbf{C})$ is relatively small for all pairs of comparable elements $x,y\in [t]^n$. We then prove the required supersaturation results in Section~\ref{sect:supersaturation}, by analyzing the intersection of a $\phi$-random chain $\mathbf{C}$ with subsets of $[t]^n$. Finally, we weave all the strands of our argument together in Section \ref{sect:containers} to prove our container lemma (Lemma~\ref{lemma:container}) and conclude the proof of Theorem~\ref{thm:main}.

Before we enter the main part of the argument, we gather in Section~\ref{sect:small_antichains} a number of important preliminary results, while in Section~\ref{sect:levels} we present some bounds and estimates on the distribution of the rank sequence (the sequence of sizes of successive levels) of $[t]^n$. The latter results are technical in nature, but they are crucial for showing that the flow we construct in Section~\ref{sect: flows} is sufficiently regular.

\subsection{Notation}
We use standard graph theoretic, poset and Landau notation. When deployed, the $O(.),\Omega(.),\Theta(.)$ notation always hide an absolute constant independent of all our parameters.

A poset $(P,\prec_P)$ is a set $P$ together with a partial ordering $\prec_P$ on $P$. To simplify notation, we often write $P$ instead of $(P,\prec_P)$, and $\prec$ instead of $\prec_P$ if $P$ is clear from the context. An element $y\in P$ \emph{covers} $x\in P$ if $x\prec y$ and there is no $z\in P$ such that $x\prec z\prec y$. We denote this by $x\precdot_P y$, or simply $x\precdot y$. The \emph{cover graph} (also known as the \emph{Hasse diagram}) of $P$ is the graph with vertex set $P$ in which two vertices are joined by an edge if one covers the other. The \emph{comparability graph} of $P$ is the graph on vertex set $P$ in which comparable pairs form the edge set.
An \emph{antichain} in a poset is a set of pairwise incomparable elements, and a \emph{chain} is a set of pairwise comparable elements.

Recall that we write $[t]=\{0,\dots,t-1\}$. Our main object of study in this paper is the $n$-dimensional \emph{hypergrid}, consisting of the set $[t]^{n}$ endowed with the natural coordinate-wise partial ordering: $x\preceq y$ if $x(i)\leq y(i)$ for every $i=1,\dots,n$. Here, as in the rest of the paper, $x(i)$ denotes the $i$-th coordinate of $x$. We write $|x|:=x(1)+\dots+x(n)$. Thus the edges of the cover graph of $[t]^n$ are precisely those pairs $\{x,y\}$ with $|x-y|=1$. We usually write $xy$ as a shorthand for $\{x,y\}$. Furthermore, we use $\alpha(t,n)$ to denote the size of a largest antichain (or \emph{width}) of the poset $[t]^n$, and $A(t,n)$ to denote the total number of antichains in $[t]^n$. 

\section{A weak upper bound and counting small antichains}\label{sect:small_antichains}
In this section, we prove the following upper bound on $A(t,n)$: 
\begin{theorem}\label{thm:weak}
For every $n,t\geq 2$, we have
$$\log_2 A(t,n)=O(\alpha(t,n)).$$ 
\end{theorem}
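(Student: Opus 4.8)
The plan is to use the induction $A(t,n)\le A(t,n-1)^t$ recalled in the introduction, but to "short-circuit" the recursion before the exponential blow-up accumulates. The trivial induction from the base case $A(t,2)=\binom{2t}{t}\le 4^t$ loses a factor of roughly $\sqrt n$ because $\alpha(t,n-1)/\alpha(t,n)$ is bounded below by a constant close to $1$ but not equal to $1$, and iterating $n$ times multiplies all these losses. Instead I would like to find, for a fixed threshold dimension $k$ (to be chosen as an absolute constant, or perhaps $k=O(1)$ depending on nothing), a bound of the form $\log_2 A(t,k)=O(t^{k-1}/\sqrt k)=O(\alpha(t,k))$ uniformly in $t$, and then climb from $k$ to $n$ in a way that only costs a constant factor overall.

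The tool that makes the climb cost-free is Tomon's result on uniform chain decompositions~\cite{tomon15}: the cover graph of $[t]^n$ admits a partition of its maximal chains (or rather a chain decomposition of the poset) into chains that are "balanced", so that each of the $\alpha(t,n)$ chains meets each level in a controlled way, and in particular the number of chains is exactly the width $\alpha(t,n)$ and every element lies on exactly one chain. The key point is this: if $\mathcal{C}$ is a chain decomposition of $[t]^n$ into $\alpha(t,n)$ chains, then an antichain $A$ picks at most one element from each chain, so $A$ is determined by choosing, for each chain $C\in\mathcal C$, either "nothing" or one of its $\le |C|\le (t-1)n+1$ elements; this gives $A(t,n)\le ((t-1)n+2)^{\alpha(t,n)}$, i.e. $\log_2 A(t,n)=O(\alpha(t,n)\log(tn))$, which is already Tsai's bound but still has a spurious $\log(tn)$. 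To remove it I would instead use the chain decomposition only to reduce the dimension: group the coordinates of $[t]^n$ into $n/k$ blocks of size $k$, so that $[t]^n\cong ([t]^k)^{n/k}$, and apply the slicing inequality at the level of these blocks, $A(t,n)\le A([t]^k, n/k-\text{analogue})$. More precisely, writing $Q=[t]^k$, one has $A(Q^{m})\le A(Q^{m-1})^{|Q|}$ by the same slicing argument (fix the last $Q$-coordinate), and iterating from the base $A(Q^2)\le \big(\text{something}\big)$.

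Here is the cleaner route I would actually pursue. Fix $k$ an absolute constant. By the crude recursion $A(t,k)\le A(t,2)^{t^{k-2}}\le 4^{t^{k-1}}$, so $\log_2 A(t,k)=O(t^{k-1})$; since $\alpha(t,k)=\Theta(t^{k-1}/\sqrt k)$ by Anderson's estimate (valid for $k$ a constant, with the implied constants absolute once $k$ is fixed), we get $\log_2 A(t,k)\le c_k\,\alpha(t,k)$ for an absolute constant $c_k$. Now to pass from $k$ to general $n$: iterate the original inequality $A(t,n)\le A(t,n-1)^t$ down to dimension $k$, giving $\log_2 A(t,n)\le t^{\,n-k}\log_2 A(t,k)\le c_k\, t^{n-k}\,\alpha(t,k)$. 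It remains to check that $t^{n-k}\alpha(t,k)=O(\alpha(t,n))$, i.e. that $\alpha(t,k)/\alpha(t,n)=O(t^{-(n-k)})=O(\alpha(t,k)t^k/(\alpha(t,n)t^n))$ — equivalently $\alpha(t,n)\ge c\, t^{n-k}\alpha(t,k)$. By Anderson, $\alpha(t,n)=\Theta(t^n/\sqrt n)$ and $\alpha(t,k)=\Theta(t^k/\sqrt k)$ with $k$ fixed, so $t^{n-k}\alpha(t,k)=\Theta(t^n/\sqrt k)$, which exceeds $\alpha(t,n)=\Theta(t^n/\sqrt n)$ by a factor $\Theta(\sqrt{n/k})$ — the wrong direction, and this is exactly the obstacle.

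So the naive down-to-constant recursion does not suffice, and one genuinely needs Tomon's \emph{uniform} chain decomposition, whose content is that $[t]^n$ can be partitioned into $\alpha(t,n)$ chains each of length $O(\sqrt n\cdot \text{(something)})$ — in fact each of length $O((t-1)\sqrt n)$ up to logarithmic factors, which is much shorter than the maximum chain length $(t-1)n+1$. Given such a decomposition into $\alpha(t,n)$ chains each of length $\le \ell$ with $\ell=n^{O(1)}$ or better $\ell=O(t\sqrt n\,\mathrm{polylog})$, the counting bound becomes $A(t,n)\le (\ell+1)^{\alpha(t,n)}$, hence $\log_2 A(t,n)\le \alpha(t,n)\log_2(\ell+1)=O(\alpha(t,n)\log(tn))$ — still not quite $O(\alpha(t,n))$. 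The final trick to kill the $\log$ is to combine the two ideas: apply slicing to reduce from dimension $n$ to dimension $n/\log n$ (or $n/\mathrm{polylog}$), where "slicing in blocks of size $s$" via $A(t,n)\le A(t,n-s)^{t^s}$ costs a factor $t^s$ in the exponent, and then apply Tomon's uniform chain decomposition in the reduced dimension, where the chain length is now only polylogarithmic in the original parameters, so $\log(\ell+1)=O(1)\cdot$... — the bookkeeping here is where the real work lies, and choosing $s$ to balance the $t^s$ loss from slicing against the $\log$ saved from shortening chains is the heart of the matter. I expect the main obstacle to be precisely this balancing, i.e. extracting from~\cite{tomon15} a chain decomposition of $[t]^n$ whose chains are short enough (length $n^{o(1)}$, or short enough that $t^{o(n)}\cdot(\text{length})^{\alpha}$ stays within a constant factor of $2^{\alpha(t,n)}$) with the implied constants uniform in both $t$ and $n$.
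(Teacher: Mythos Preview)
Your plan has a genuine gap: none of the routes you sketch removes the extraneous $\log$ factor, and the balancing you propose at the end cannot work. Concretely, if you slice off $s$ coordinates and then apply a uniform chain decomposition in dimension $n-s$, you get
\[
\log_2 A(t,n)\le t^s\cdot K\cdot \log_2(\ell+1),
\]
where $K=\Theta(t^{n-s-1}/\sqrt{n-s})$ is the number of chains and $\ell=\Theta(t\sqrt{n-s})$ their length. Thus the bound is $\Theta\!\bigl(t^{n-1}/\sqrt{n-s}\bigr)\cdot\log(t\sqrt{n-s})$, and since $\sqrt{n/(n-s)}\ge 1$ while $\log(t\sqrt{n-s})\ge 1$, no choice of $s$ brings this down to $O(t^{n-1}/\sqrt n)$. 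The $\log$ is intrinsic to one-dimensional chain counting: a chain of length $\ell$ has $\ell+1$ antichains, so $\log_2 A = \Theta(\alpha\cdot\log\ell)$ there.

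The idea you are missing is to go to \emph{two} dimensions instead of one. Split the coordinates of $[t]^n$ into two halves, $[t]^n\cong[t]^{n_1}\times[t]^{n_2}$ with $n_1=\lfloor n/2\rfloor$, and apply Tomon's uniform chain decomposition separately to each factor. Taking products of chains gives a partition of $[t]^n$ into $s=\Theta(t^{n-2}/n)$ \emph{rectangles} $R_j\cong[c_j]\times[c_j']$ with $c_j,c_j'=\Theta(t\sqrt n)$. The point is that in a two-dimensional grid the number of antichains is a binomial coefficient: $A([c]\times[c'])=\binom{c+c'}{c}\le 4^{\max(c,c')}$, so $\log_2 A(R_j)=O(t\sqrt n)$ with no logarithmic loss. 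Summing,
\[
\log_2 A(t,n)\le \sum_{j=1}^{s}\log_2 A(R_j)=O\bigl(s\cdot t\sqrt n\bigr)=O\!\left(\frac{t^{n-1}}{\sqrt n}\right)=O(\alpha(t,n)).
\]
The reason two dimensions succeed where one fails is exactly that $\log_2 A([c]^2)=\Theta(c)=\Theta(\alpha([c]^2))$, whereas for a chain $\log_2 A([c])=\Theta(\log c)$ while $\alpha([c])=1$.
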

While this is weaker than the bound promised by Theorem~\ref{thm:main}, we shall use the ideas in the proof of Theorem~\ref{thm:weak} to get a rough count of the number of antichains which are `not too large'. Explicitly, we show the following theorem, which is an essential component of the proof of Theorem~\ref{thm:main}:
\begin{theorem}\label{thm:small_antichains}
For every $n,t,k\geq 2$, the number of antichains of $[t]^{n}$ of size at most $\frac{1}{k}\cdot\alpha(t,n)$ is at most 
$$\exp_2\left(O\left(\frac{\log k}{k}\cdot \alpha(t,n)\right)\right).$$ 
\end{theorem}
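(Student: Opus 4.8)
The plan is first to reduce to a more symmetric-looking statement. Writing $s=\lfloor\frac1k\alpha(t,n)\rfloor$, and noting that $\log_2(2\alpha(t,n)/s)=\Theta(\log k)$ for every $k\ge 2$, it suffices to prove that for an absolute constant $C$ and every $1\le s\le\alpha(t,n)$, the number of antichains of $[t]^n$ of size at most $s$ is at most $\exp_2\!\left(C\,s\log_2(2\alpha(t,n)/s)\right)$; for $s>\alpha(t,n)/2$ this is already contained in Theorem~\ref{thm:weak}, so the real work lies in the range $s\le\alpha(t,n)/2$. I would prove this by induction on $n$, with $n=2$ as the base case: antichains of $[t]^2$ of size $a$ are staircases, of which there are exactly $\binom ta^2$, so the bound follows from $\binom ta^2\le (et/a)^{2a}$ and $\alpha(t,2)=t$. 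Throughout, write $N(R,a)$ for the number of antichains of a poset $R$ of size at most $a$.

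For the inductive step I would partition $[t]^n$ into `rectangles' — sub-boxes each order-isomorphic to a product of chains — of equal size, chosen so that: (i) the partition is efficient, i.e.\ the sum of the widths of the rectangles is $O(\alpha(t,n))$; and (ii) each rectangle $R$ admits a good bound on the number of its antichains of each given size, $\log_2 N(R,a)\le C'\,a\log_2(2\alpha(R)/a)$, obtained either from Tomon's uniform chain decomposition applied to $R$ (as in the proof of Theorem~\ref{thm:weak}) or directly from the inductive hypothesis. A natural such family is produced by chopping a bounded number of coordinates of $[t]^n$ into shorter intervals while leaving the rest untouched; Anderson's estimate shows this keeps the total width within a constant factor of $\alpha(t,n)$ and lets one tune the common width $w=\alpha(R)$ of the rectangles. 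Given a partition into $m$ such rectangles $R_1,\dots,R_m$, any antichain $A$ with $|A|\le s$ restricts to an antichain of size $a_i:=|A\cap R_i|$ in $R_i$ with $\sum_i a_i\le s$, so
\[
\log_2\bigl(\#\{\text{antichains of size}\le s\}\bigr)\ \le\ \log_2\!\left(\sum_{\substack{(a_i):\ \sum_i a_i\le s}}\ \prod_{i=1}^{m} N(R_i,a_i)\right).
\]
I would bound the right-hand side by the usual generating-function trick: using $\prod_i N(R_i,a_i)\le\prod_i 2^{C'a_i\log_2(2w/a_i)}$ and comparing the constrained sum with $x^{-s}\bigl(\sum_{a\ge 0}2^{C'a\log_2(2w/a)}x^a\bigr)^{m}$ for an optimal $x\in(0,1]$. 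Since $\sum_{a\ge 0}2^{C'a\log_2(2w/a)}x^a=1+O\bigl((2w)^{C'}x\bigr)$ for $x$ small, this produces a bound of the shape $s\log_2(m/s)+C'\,s\log_2(2w)+O(s)$; using $mw=O(\alpha(t,n))$ together with $w\le\alpha(t,n)/(2s)$, both main terms become $O\bigl(s\log_2(2\alpha(t,n)/s)\bigr)$, which closes the induction.

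The main obstacle — and the point where one has to be clever rather than routine — is to make a single scale of rectangle partition (or a short case analysis over scales) work for the full range $2\le k\le\alpha(t,n)$ and all $t,n$. The naive estimate `choose $\le s$ chains of a uniform chain decomposition of $[t]^n$ and then one element of each' loses a factor of order $s\log(t\sqrt n)$ in the exponent — the cost of locating $A\cap C$ inside a chain $C$ — which is fatal once $t$ is large relative to $k$. The rectangle partition exists precisely to avoid this: inside each rectangle one counts antichain traces directly (as in the $n=2$ computation, where $\binom ta^2$ appears in place of $\binom ta t^a$), so the incomparability of the elements of $A$ is not discarded. Balancing the width $w$ of the rectangles against the efficiency of the partition — so that neither $s\log_2(m/s)$ nor $s\log_2 w$ exceeds $O(s\log k)$ — is the crux, and I expect that handling intermediate $k$ requires picking the scale of the chop as a function of $s$, $t$ and $n$ rather than a fixed one; the extreme regime of $s$ close to $\alpha(t,n)$, where no nontrivial partition helps, is disposed of separately by Theorem~\ref{thm:weak}.
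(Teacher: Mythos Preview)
Your high-level strategy --- partition $[t]^n$ into sub-posets, bound antichain traces in each piece, and combine --- is right and is what the paper does. But your proposed partition by coordinate chopping cannot simultaneously achieve efficiency $mw=O(\alpha(t,n))$, the per-rectangle bound $\log_2 N(R,a)\le C'a\log_2(2w/a)$, and the smallness condition $w\le\alpha(t,n)/(2s)$ across the full range of $s$, and this is a genuine gap rather than a detail. If you chop $d$ coordinates fully you get $m=t^d$ boxes isomorphic to $[t]^{n-d}$, so your inductive hypothesis applies, but then $mw/\alpha(t,n)\asymp\sqrt{n/(n-d)}$; to reach $m\ge\Omega(s)$ when $s$ is near $\alpha(t,n)/2$ you are forced down to $n-d=O(\log_t n)$, the efficiency loss is $\Theta(\sqrt{n/\log_t n})$, and the exponent acquires an extra $\Theta(s\log n)$ term that prevents the induction from closing even for bounded $k$. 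Keeping $d$ bounded instead, as you suggest, limits $m\le t^{O(1)}$, which only handles $s\le t^{O(1)}$. Partial chops produce boxes with unequal side lengths; your inductive hypothesis (stated only for $[t]^n$) does not apply to these, and applying Tomon's chain theorem directly to such an $R$ yields only $\log_2 N(R,a)\le a\log_2(e\alpha(R)/a)+a\log_2(|R|/\alpha(R))$, whose second term is precisely the $s\log(t\sqrt n)$ loss you yourself flag as fatal.

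The paper sidesteps this by using a different, \emph{fixed} partition (Lemma~\ref{lemma:rectangle_partition}): write $[t]^n=[t]^{\lfloor n/2\rfloor}\times[t]^{\lceil n/2\rceil}$, apply Tomon's theorem to \emph{each factor} to partition it into chains of length $\Theta(t\sqrt n)$, and take products. This yields $m=\Theta(t^{n-2}/n)$ \emph{two-dimensional} rectangles of side $\Theta(t\sqrt n)$, with $m\cdot u=\Theta(\alpha(t,n))$ independently of $s$. The per-rectangle count is then the elementary $N([u]^2,a)\le\binom{u}{a}^2$ of Lemma~\ref{lemma:2D}, and the pieces are combined via Jensen's inequality on $x\mapsto-x\log x$ (which, unlike your generating-function optimisation, works equally well when $s\gg m$) together with a separate bound on the number of compositions of $r\le s$ into $m$ parts. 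A case $k\ge n^{1/4}t^{1/2}$ is handled by the crude subset count $\sum_{i\le s}\binom{t^n}{i}$. No induction on $n$ is used.
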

\noindent 
We lay the ground for a proof of these two theorems by introducing some important notions related to partial orders. In particular,  we state some of the theoretical tools from the literature on the rank sequences of hypergrids, normalized matchings and chain partitions that we shall need in our argument.

\subsection{Rank sequences, log concavity and width estimates}

Given a poset $P$, we say that $P$  is \emph{graded} if there exists a function $r:P\rightarrow \mathbb{N}$ such that $r(x)=0$ if $x$ is a $\prec_P$-minimal element of $P$, and $r(y)=r(x)+1$ if $y$ covers $x$. If such a function exists, it is unique, and is called the \emph{rank function} of $P$. For $i\in \mathbb{N}$, the \emph{$i$-th level} of $P$ is the set $$L_P(i):=L(i)=\{x\in P:r(x)=i\},$$ and the $i$-th rank number is $N_P(i):=N(i)=|L(i)|$. The \emph{rank sequence} of $P$ is the sequence $N(0),\dots,N(n-1)$, where $n$ is the number of non-empty levels.

A sequence of non-negative real numbers $a_0,a_1,\dots,a_{n-1}$ is \emph{log-concave} if $$(a_i)^2\geq a_{i-1}\cdot a_{i+1}$$ for every $i=1,\dots,n-2$. Also, it is \emph{unimodal} if there exists $m\in [n]$ such that $a_0, a_1, \dots,a_m$ is monotone increasing, and $a_{m},\dots,a_{n-1}$ is monotone decreasing. It is easy to see that if a sequence is log-concave, then it is also unimodal.  Note also that if $i<i'\leq j'< j$ and $i+j=i'+j'$, then log-concavity implies that $$a_ia_j\leq a_i'a_j'.$$

Clearly, $[t]^{n}$ is graded with the $(t-1)n+1$ levels
\begin{align*}
L(i)=\{x\in [t]^n: |x|=i\}, && i\in\{0,1, \ldots, (t-1)n\}.\end{align*}
 It was proved by Anderson~\cite{anderson68} that the rank sequence  of $[t]^{n}$ is log-concave, and thus unimodal as well. 
\begin{lemma}[Anderson \cite{anderson68}]\label{lemma: log-concavity of hypergrid rank sequence}
    The rank sequence of $[t]^{n}$ is log-concave (and thus unimodal).
\end{lemma}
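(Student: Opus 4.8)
The plan is to prove, by induction on $n$, the slightly stronger statement that the rank sequence of $[t]^{n}$ is log-concave \emph{and has no internal zeros} (i.e.\ is a P\'olya frequency sequence of order $2$; we need the stronger hypothesis for the induction to go through). The starting point is that $N_{[t]^n}(i)$ is exactly the coefficient of $x^{i}$ in $(1+x+\cdots+x^{t-1})^{n}$; equivalently, writing $r_m$ for the rank sequence of $[t]^{m}$, the decomposition $[t]^{n}\cong[t]^{n-1}\times[t]$ together with $|x|=|x'|+x(n)$ yields the convolution identity $r_n=r_{n-1}*r_1$, where $r_1=(1,1,\dots,1)$ has length $t$. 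Since $r_1$ is (trivially) log-concave with no internal zeros, Lemma~\ref{lemma: log-concavity of hypergrid rank sequence} follows once we know that the convolution of two log-concave sequences with no internal zeros is again log-concave with no internal zeros.

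To prove this convolution lemma I would pass through total positivity. Call a finitely supported nonnegative sequence $(a_i)$ \emph{TP$_2$} if every $2\times 2$ minor of its (semi-infinite) Toeplitz matrix $A=(a_{j-i})_{i,j}$ is nonnegative, where $a_m:=0$ outside the range of indices. A short case analysis --- using exactly the consequence of log-concavity recorded just above the lemma, namely that spreading indices apart while keeping their sum fixed does not increase the product --- shows that, for a sequence with no internal zeros, being log-concave is \emph{equivalent} to being TP$_2$. Now if $a,b$ are log-concave with no internal zeros, their Toeplitz matrices $A,B$ are TP$_2$; and since $(AB)_{i,k}=\sum_{j}a_{j-i}b_{k-j}=c_{k-i}$ with $c=a*b$ (all sums here are finite because the sequences are finitely supported), the Cauchy--Binet formula writes each $2\times 2$ minor of $AB$ as a sum of products of nonnegative $2\times 2$ minors of $A$ and of $B$. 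Hence $AB$, which is the Toeplitz matrix of $c$, is TP$_2$, so $c$ is log-concave; and $c$ has no internal zeros because its support is the sumset of the two interval supports of $a$ and $b$, which is again an interval. Feeding this into the induction on $n$, with the trivial base case $n=1$, completes the proof.

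The genuine content is the convolution lemma, and this is the step I expect to be the main obstacle. A naive attempt to prove $r_n(i)^2\ge r_n(i-1)\,r_n(i+1)$ directly by symmetrizing the double sum coming from $r_{n-1}*r_1$ fails: the relevant termwise inequality can go the wrong way when the two summation indices are far apart, so one cannot conclude term by term. Routing the argument through the $2\times 2$-minor characterization and Cauchy--Binet is precisely what forces the interaction between the two sequences to come out with the correct sign; alternatively one can track this interaction by hand, at the cost of a more delicate (and less transparent) computation. Everything else --- the generating-function identity, the convolution structure arising from $[t]^{n}\cong[t]^{n-1}\times[t]$, and the preservation of ``log-concave with no internal zeros'' under convolution --- is routine.
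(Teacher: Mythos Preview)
The paper does not supply a proof of this lemma: it is stated as a quotation of Anderson's result, with no argument given. Your proposal is correct and self-contained. The route you take --- identifying the rank sequence with the coefficient sequence of $(1+x+\cdots+x^{t-1})^n$, reducing to the preservation of ``log-concave with no internal zeros'' under convolution, and proving the latter via the TP$_2$ characterization together with Cauchy--Binet --- is a standard and clean way to establish this fact. The one point worth being explicit about is that the Cauchy--Binet identity is being applied to banded (hence effectively finite) Toeplitz matrices, so the sums involved are finite; you allude to this, and it is indeed unproblematic here.
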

\noindent As $[t]^n$ is symmetric around its middle level, i.e. $N(i)=N((t-1)n-i)$ for $i\in [(t-1)n]$, this implies that a maximum-sized level of $[t]^{n}$ is $L(m)$, where $m=\lfloor (t-1)n/2\rfloor$. By another result of Anderson~\cite{anderson67}, the size of the largest antichain in $[t]^n$ is also $N(m)=\vert L(m)\vert $. Furthermore, Anderson provided the following estimate for the value of $N(m)$:
 \begin{lemma}[Anderson \cite{anderson67}]\label{lemma:middle_level}
 For every fixed $\varepsilon>0$, if $n$ is sufficiently large, then the following holds for every $t\geq 2$: 
$$(1-\varepsilon) \sqrt{\frac{6}{\pi (t^2-1)n}}\cdot t^n\leq \alpha(t,n)=N(m)\leq (1+\varepsilon)\sqrt{\frac{6}{\pi (t^2-1)n}}\cdot t^n.$$
\end{lemma}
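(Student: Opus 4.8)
The plan is to prove Lemma~\ref{lemma:middle_level} as a \emph{local central limit theorem}; in the paper this estimate is quoted from Anderson, but the natural self-contained route is the following. Identify rank numbers with probabilities: if $X_1,\dots,X_n$ are independent and uniform on $[t]=\{0,\dots,t-1\}$ and $S_n=X_1+\dots+X_n$, then $N(i)=t^n\cdot\mathbb{P}(S_n=i)$. A one-line computation gives $\mathbb{E}[X_1]=\tfrac{t-1}{2}$ and $\mathrm{Var}(X_1)=\tfrac{t^2-1}{12}$, so $S_n$ has mean $\mu:=\tfrac{(t-1)n}{2}$ and variance $\sigma^2:=\tfrac{n(t^2-1)}{12}$, and $m=\lfloor\mu\rfloor$ differs from $\mu$ by at most $\tfrac12$. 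Since $\tfrac{1}{\sqrt{2\pi}\,\sigma}=\sqrt{\tfrac{12}{2\pi n(t^2-1)}}=\sqrt{\tfrac{6}{\pi(t^2-1)n}}$, the asserted two-sided bound on $N(m)=t^n\,\mathbb{P}(S_n=m)$ is exactly the statement that $\mathbb{P}(S_n=m)=(1+o(1))\cdot\tfrac{1}{\sqrt{2\pi}\,\sigma}$, with the $o(1)$ tending to $0$ as $n\to\infty$ \emph{uniformly in $t\ge 2$}. (The remaining content of the displayed line, the identity $\alpha(t,n)=N(m)$, is a separate result of Anderson that follows from the normalized matching property of $[t]^n$.)

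To establish this I would use Fourier inversion. With $D_t(\theta):=\mathbb{E}\,e^{i\theta(X_1-(t-1)/2)}=\tfrac{\sin(t\theta/2)}{t\sin(\theta/2)}$ (real, even, $|D_t|\le1$, $D_t(0)=1$), we have
\[
\mathbb{P}(S_n=m)=\frac{1}{2\pi}\int_{-\pi}^{\pi}D_t(\theta)^n\,e^{i\delta\theta}\,d\theta,\qquad \delta:=\mu-m\in\left\{0,\tfrac12\right\}.
\]
Fix the cutoff $\eta:=n^{-1/3}/t$ and split the integral over $|\theta|\le\eta$, $\eta\le|\theta|\le\pi/t$, and $\pi/t\le|\theta|\le\pi$. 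On the first piece the expansion $\log D_t(\theta)=-\tfrac{t^2-1}{24}\theta^2+O(t^4\theta^4)$, valid uniformly in $t$ for $|\theta|\le\pi/t$, gives $D_t(\theta)^n=e^{-\sigma^2\theta^2/2}(1+O(n^{-1/3}))$; together with $e^{i\delta\theta}=1+O(\eta)$ and the fact that the Gaussian tail beyond $\eta$ is negligible (because $\sigma\eta\to\infty$), this piece equals $(1+o(1))\cdot\tfrac{1}{2\pi}\int_{\mathbb{R}}e^{-\sigma^2\theta^2/2}\,d\theta=(1+o(1))\cdot\tfrac{1}{\sqrt{2\pi}\,\sigma}$, the desired main term.

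It remains to see that the other two pieces are $o(1/\sigma)$, uniformly in $t$. For the far piece, $\sin(\theta/2)\ge\theta/\pi$ on $(0,\pi]$ yields $|D_t(\theta)|\le\tfrac{\pi}{t\theta}$, so
\[
\int_{\pi/t}^{\pi}|D_t(\theta)|^n\,d\theta\le\frac{\pi^n}{t^n}\int_{\pi/t}^{\pi}\theta^{-n}\,d\theta\le\frac{\pi}{t(n-1)}=O\!\left(\frac{1}{tn}\right)=o\!\left(\frac{1}{\sigma}\right),
\]
using $\sigma\asymp t\sqrt{n}$ for $t\ge 2$; it is important here that one gets away with this crude power-law bound rather than needing exponential decay, which is what makes the argument uniform for arbitrarily large $t$. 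For the middle piece I would invoke a Gaussian decay bound $|D_t(\theta)|\le e^{-ct^2\theta^2}$ for $0\le\theta\le\pi/t$ with an absolute $c>0$; granting it, $\int_{\eta}^{\pi/t}|D_t|^n\,d\theta\le\int_{\eta}^{\infty}e^{-cnt^2\theta^2}\,d\theta\lesssim\tfrac{1}{tn^{2/3}}e^{-cn^{1/3}}=o(1/\sigma)$. Adding the three estimates gives $\mathbb{P}(S_n=m)=(1+o(1))\tfrac{1}{\sqrt{2\pi}\,\sigma}$ with the error independent of $t$, which is the lemma.

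The step I expect to be the real obstacle is exactly that last bound, $|D_t(\theta)|\le e^{-ct^2\theta^2}$ on $[0,\pi/t]$ with $c$ not depending on $t$: a local limit theorem for a single fixed $t$ is routine, but here everything must be controlled with constants independent of $t$. Substituting $s=t\theta/2$, the bound amounts to showing that $s^{-2}\log\tfrac{t\sin(s/t)}{\sin s}$ is bounded below by a positive absolute constant for all $t\ge2$ and $s\in(0,\pi/2]$; this is a compactness argument once one understands the two extreme regimes, $t=2$ (where $D_2(\theta)=\cos(\theta/2)$) and $t\to\infty$ (where the quantity degenerates to $\tfrac{\sin s}{s}$), but carrying it out cleanly and uniformly is the delicate point. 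The remaining ingredients — the Taylor expansion of $D_t$ near $0$, the Gaussian integral, and the two tail bounds — are routine.
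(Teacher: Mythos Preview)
The paper does not give its own proof of Lemma~\ref{lemma:middle_level}: it is quoted as a result of Anderson~\cite{anderson67}, with the remark that only the cruder $\alpha(t,n)=\Theta(t^{n-1}/\sqrt{n})$ is actually needed. So there is no ``paper's proof'' to compare against for this particular statement.

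That said, your local-CLT proof is correct, and is in fact essentially the same Fourier-analytic analysis that the paper carries out in its Appendix for the related Lemma~\ref{lemma:derivatives}. Your three-piece decomposition of $[0,\pi]$ matches the paper's decomposition into $I_1=[0,2.26/t]$, $I_2=[2.26/t,\pi/2]$, $I_3=[\pi/2,\pi]$ (the cutoffs differ only in inessential constants). The step you single out as the obstacle --- the uniform Gaussian bound $|D_t(\theta)|\le e^{-ct^2\theta^2}$ on $[0,\pi/t]$ --- is exactly what the paper proves as Claim~\ref{claim:First bound} (on the slightly smaller range $|\theta|\le 2.26/t$, which suffices since your power-law bound $|D_t(\theta)|\le \pi/(t\theta)$ then handles $[2.26/t,\pi]$). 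Your power-law tail bound is the analogue of the paper's Claim~\ref{claim:second bound}. So your proposed argument is not merely correct but would slot directly into the paper's own toolkit; the only difference is that the paper, needing the more refined information about $f$, $f'$, $f''$ anyway, simply cites Anderson for the asymptotic of $N(m)$ rather than deriving it separately.
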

In our arguments, we shall only require the weaker result that $\alpha(t,n)=\Theta(t^{n-1}/\sqrt{n})$, a simpler proof of which can be found in~\cite{book_Anderson}.

\subsection{The normalized matching property}

Let $G$ be a bipartite graph with vertex classes $A$ and $B$. For $X\subseteq V(G)$, let $N(X)=\{y\in V(G):\exists x\in X,\ xy\in E(G)\}$ be the \emph{neighborhood of $X$}. Then $G$ has the \emph{normalized matching property} if for every $X\subseteq A$, 
$$\frac{|N(X)|}{|B|}\geq \frac{|X|}{|A|}.$$
Since $N(A\setminus N(Y))\subseteq B\setminus Y$, this also implies that $\frac{|N(Y)|}{|A|}\geq \frac{|Y|}{|B|}$ for every $Y\subseteq B$, so it does not matter how the vertex classes of $G$ are labeled in the definition above. 

If $P$ is a graded poset, we say that $P$ has the \emph{normalized matching property} if the bipartite graph of comparabilities between any two distinct levels has the normalized matching property. Note that it is enough to ensure that this holds for consecutive levels in order to establish the full property. 

\begin{lemma}[Anderson \cite{anderson68} and Harper \cite{harper74}]\label{lemma:normalized_matching}
The poset $[t]^{n}$ has the normalized matching property.
\end{lemma}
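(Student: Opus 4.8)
The plan is to prove that consecutive levels $L(i)$ and $L(i+1)$ of $[t]^n$ induce a bipartite graph with the normalized matching property, which as noted suffices. The standard route is via the \emph{LYM-type inequality} / flow characterization: a graded bipartite graph between levels has the normalized matching property if and only if there is a \emph{normalized matching flow}, i.e.\ a weighting $w$ of the cover edges between $L(i)$ and $L(i+1)$ such that every vertex $x\in L(i)$ sends out total weight $1/N(i)$ and every vertex $y\in L(i+1)$ receives total weight $1/N(i+1)$. So the first step is to record this equivalence (it follows from the max-flow min-cut theorem, or directly from Hall's theorem applied to a suitably blown-up bipartite graph), and then it is enough to exhibit such a flow.

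The key step is to construct the flow explicitly by symmetry. First I would treat the one-dimensional case: for $[t]^1$ the cover graph is a path, and the required weighting is forced and trivially exists. For the product, I would exploit the fact that $[t]^n=[t]\times[t]^{n-1}$ and that the hypergrid is a so-called \emph{normalized matching poset closed under products} --- this is exactly the content of the Anderson--Harper theorem. Concretely, the inductive step runs as follows: assuming $[t]^{n-1}$ has the normalized matching property, one builds the flow on $[t]^n$ between $L(i)$ and $L(i+1)$ by splitting each element $x\in[t]^n$ as $(x(n), x')$ with $x'\in[t]^{n-1}$, decomposing the level sets of $[t]^n$ into ``layers'' indexed by the last coordinate, and combining the flows on the slices $\{j\}\times[t]^{n-1}$ (which are copies of $[t]^{n-1}$) with the trivial flow in the last coordinate direction. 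One must then check the conservation conditions at each vertex, using the product formula $N_{[t]^n}(i)=\sum_j N_{[t]^{n-1}}(i-j)$ and the log-concavity/unimodality of the rank sequence from Lemma~\ref{lemma: log-concavity of hypergrid rank sequence} to verify the relevant inequalities when redistributing weight between adjacent layers.

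Alternatively, and perhaps more cleanly, I would cite Harper's theorem that a product of normalized matching posets with \emph{log-concave, symmetric rank sequences} again has the normalized matching property, and verify the hypotheses: $[t]^1$ trivially has the property and a log-concave symmetric rank sequence $(1,1,\dots,1)$, and log-concavity of the rank sequence of the product is preserved (this is where Anderson's Lemma~\ref{lemma: log-concavity of hypergrid rank sequence} enters, since log-concavity of rank sequences is not automatically preserved under products without an additional symmetry hypothesis). Then the normalized matching property of $[t]^n$ follows by induction on $n$.

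The main obstacle is the induction step: showing that the product of two normalized matching posets is again normalized matching is \emph{not} true in general, and the proof genuinely needs the extra structure (symmetry and log-concavity of the rank sequences). Verifying that the constructed flow satisfies the vertex-conservation equations --- equivalently, that Hall's condition holds in the product after appropriate normalization --- is the delicate part, because naive layer-by-layer matchings can fail to balance the total weights and one must carefully shift weight between consecutive layers, which is exactly where the log-concavity estimates are used. Since both Anderson's and Harper's original arguments establish precisely this, for the purposes of this paper it is cleanest to invoke their result directly as stated.
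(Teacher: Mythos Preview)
Your plan is essentially correct and matches the paper's treatment: the paper states this lemma as a citation to Anderson and Harper without proof, but then in Section~\ref{sect:normalized_flow} it reconstructs Harper's argument in exactly the way you describe --- building an SNMF on $P\times[t]$ from one on $P$, and iterating. One small correction: you write that Harper's product theorem requires \emph{symmetric} log-concave rank sequences, and that log-concavity is not preserved under products without symmetry. Neither claim is accurate: Harper's theorem (as the paper states it) only needs log-concave rank sequences and the normalized matching property for both factors, and the convolution of two log-concave sequences is always log-concave, with no symmetry hypothesis needed. This does not affect the validity of your argument for $[t]^n$, but you should drop the symmetry clause.
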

Kleitman \cite{kleitman74} proved that the normalized matching property is equivalent to a number of other interesting properties. Given a graded poset $P$ with $n$ levels, $P$ has the \emph{LYM property} if for every antichain $A\subseteq P$, we have 
$$\sum_{i=0}^{n-1}\frac{|L(i)\cap A|}{\vert L(i)\vert}\leq 1.$$
A \emph{maximal chain} in $P$ is a chain that intersects every level of $P$. If $\mathcal{C}$ is the set of maximal chains of $P$, a probability distribution $\phi: \mathcal{C}\rightarrow [0,1]$ is a \emph{regular chain cover} if for every $x\in L(i)$, we have $\mathbb{P}(x\in \mathbf{C})=\frac{1}{\vert L(i)\vert }$, where $\mathbf{C}$ is chosen randomly from $\mathcal{C}$ according to the probability distribution given by $\phi$. 

\begin{lemma}[Kleitman \cite{kleitman74}]\label{lemma:LYM}
The following three properties are equivalent for every graded poset~$P$:
\begin{itemize}
    \item[1.] $P$ has the normalized matching property;
    \item[2.] $P$ has the LYM property;
    \item[3.] $P$ has a regular chain cover.
\end{itemize}
\end{lemma}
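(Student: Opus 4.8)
The plan is to prove the three-way equivalence via the cyclic chain of implications $3\Rightarrow 2\Rightarrow 1\Rightarrow 3$. The first two are short, and essentially all of the content lies in $1\Rightarrow 3$, where the purely combinatorial normalized matching inequality must be turned into an actual family of weighted maximal chains; this is where a flow (equivalently, LP-duality) argument enters. Throughout we may assume all of the levels $L(0),\dots,L(n-1)$ are non-empty, as is implicit in the statement.

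For $3\Rightarrow 2$, let $\phi$ be a regular chain cover and $\mathbf{C}$ a $\phi$-random maximal chain. Since a chain and an antichain share at most one element, $|\mathbf{C}\cap A|\le 1$ holds surely for every antichain $A$, so $\mathbb{E}\,|\mathbf{C}\cap A|\le 1$; on the other hand, by linearity of expectation and the defining property of $\phi$, $\mathbb{E}\,|\mathbf{C}\cap A|=\sum_{x\in A}\mathbb{P}(x\in\mathbf{C})=\sum_{i=0}^{n-1}\frac{|L(i)\cap A|}{|L(i)|}$, and comparing the two gives the LYM inequality. For $2\Rightarrow 1$, it suffices by the remark preceding the lemma to verify the normalized matching property between consecutive levels $L(i)$ and $L(i+1)$, whose comparability graph is exactly the bipartite graph of cover relations. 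Given $X\subseteq L(i)$ with up-neighbourhood $N^{\uparrow}(X)\subseteq L(i+1)$, the set $A:=X\cup\bigl(L(i+1)\setminus N^{\uparrow}(X)\bigr)$ is an antichain (a relation between the two parts would be a cover relation, hence absent by definition of $N^{\uparrow}(X)$), so the LYM inequality applied to $A$ reads $\frac{|X|}{|L(i)|}+\frac{|L(i+1)|-|N^{\uparrow}(X)|}{|L(i+1)|}\le 1$, which rearranges to the normalized matching inequality $\frac{|N^{\uparrow}(X)|}{|L(i+1)|}\ge\frac{|X|}{|L(i)|}$.

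For the main implication $1\Rightarrow 3$, I would first, for each $0\le i\le n-2$, produce nonnegative weights $w_i$ on the bipartite graph $G_i$ of cover relations between $L(i)$ and $L(i+1)$ with $\sum_{y}w_i(xy)=\frac{1}{|L(i)|}$ for each $x\in L(i)$ and $\sum_{x}w_i(xy)=\frac{1}{|L(i+1)|}$ for each $y\in L(i+1)$. Such weights exist by max-flow--min-cut: take the network with a source feeding capacity $\frac{1}{|L(i)|}$ into each vertex of $L(i)$, infinite-capacity arcs along $E(G_i)$, and each vertex of $L(i+1)$ feeding capacity $\frac{1}{|L(i+1)|}$ into a sink, so that total supply and demand equal $1$; any finite-capacity cut is given by a set $S\subseteq L(i)$ on the source side together with all of $N^{\uparrow}(S)$, and its capacity $1-\frac{|S|}{|L(i)|}+\frac{|N^{\uparrow}(S)|}{|L(i+1)|}$ is at least $1$ precisely by the normalized matching property, so the maximum flow is $1$, it saturates all source and sink arcs, and its values along $E(G_i)$ give $w_i$. (Equivalently, this is the feasibility criterion for a transportation polytope.) I would then observe that the normalized matching property applied to singletons forces every $x\in L(i)$ with $i<n-1$ to have an up-neighbour, and dually a down-neighbour when $i>0$, so every element lies on a maximal chain; adjoining a super-source $s$ joined to all of $L(0)$ and a super-sink $\tau$ joined from all of $L(n-1)$, the function $f$ equal to $\sum_i w_i$ on the cover graph, to $\frac{1}{|L(0)|}$ on each arc out of $s$, and to $\frac{1}{|L(n-1)|}$ on each arc into $\tau$, is a flow from $s$ to $\tau$ of value $1$, since at each internal vertex $x\in L(i)$ both the inflow and the outflow equal $\frac{1}{|L(i)|}$. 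The cover graph being acyclic, $f$ decomposes into a nonnegative combination of paths from $s$ to $\tau$ with weights summing to $1$; deleting $s$ and $\tau$ from each path leaves a maximal chain, and the weights define a probability distribution $\phi$ on maximal chains for which the total weight of chains through a given $x\in L(i)$ equals its throughput $\frac{1}{|L(i)|}$. Hence $\phi$ is a regular chain cover.

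The only genuine difficulty is this last step: the normalized matching property is a statement about cardinalities of neighbourhoods, and extracting an explicit weighting from it is exactly what max-flow--min-cut provides, after which one must check both that the level-by-level weightings glue into a flow of value precisely $1$ and that its path decomposition produces honest maximal chains --- the latter relying on the elementary but crucial point that the normalized matching property forbids ``dead ends'' inside the poset.
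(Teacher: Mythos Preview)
The paper does not prove this lemma; it is stated as a classical result of Kleitman with only a citation. Your proof is correct and complete.

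It is worth noting, however, that the paper later develops precisely the machinery underlying your implication $1\Rightarrow 3$: Lemma~\ref{lemma:flow} (also cited to Kleitman) states that the normalized matching property on a bipartite graph is equivalent to the existence of a normalized matching flow, which is exactly your level-by-level weighting $w_i$ obtained via max-flow--min-cut; and Section~\ref{sect:chain_covers} shows how to build a regular chain cover from an SNMF by defining $\phi$ multiplicatively along chains and verifying (Lemma~\ref{lemma:chain_weights}) that the mass through each $x\in L(i)$ is $1/N(i)$. The paper's multiplicative construction is a concrete instance of your abstract flow-decomposition step. So while no self-contained proof of the equivalence appears in the paper, your argument is entirely in the spirit of the tools the paper deploys downstream, and the two approaches to $1\Rightarrow 3$ are essentially the same once one unwinds the definitions.
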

The LYM property immediately implies that the width of $P$ is $\max_i N_P(i)$, the size of a largest level in $P$, so we can also deduce from Lemmas~\ref{lemma:normalized_matching} and~\ref{lemma:LYM} that $\alpha(t,n)=N(m)$ with $m=\lfloor (t-1)m/2\rfloor$. We discuss the normalized matching property in more detail in Section \ref{sect:normalized_flow}, where we relate it to the existence of certain normalized flows on the cover graph of $P$.

\subsection{The Cartesian product of posets}

Given posets $(P,\prec_P)$ and $(Q,\prec_Q)$, their \emph{Cartesian product} is the poset $(P\times Q,\prec)$, where $(x,y)\preceq (x',y')$ if $x\preceq_P x'$ and $y\preceq_Q y'$. The Cartesian product preserves a number of useful properties of $P$ and $Q$. If $P$ and $Q$ are graded, then $P\times Q$ is also graded. Also, if $P$ and $Q$ have log-concave rank sequences, then so does $P\times Q$, see \cite{harper74}. Finally, Harper~\cite{harper74} showed if $P$ and $Q$ have log-concave rank sequences, and both have the normalized matching property, then $P\times Q$ also has the normalized matching property. In  Section \ref{sect:normalized_flow}, we discuss Harper's proof in the special case $Q=[t]$.

It is useful to view $[t]^{n}$ as the Cartesian product of $n$ chains of size $t$. In particular, $[t]^n$ is isomorphic to $[t]^m\times [t]^{n-m}$ for every $m\in\{1,\dots,n-1\}$, and we fruitfully identify these two objects in our argument. We also consider the product of two chains, that is, posets isomorphic to $[t_1]\times [t_2]$ for some positive integers $t_1,t_2$; we refer to such posets as \emph{rectangles}. We  notably make use of the following simple lemma bounding the number of antichains contained in a rectangle.
\begin{lemma}\label{lemma:2D}
    Let  $t_1,t_2,u$ be positive integers such that $t_1,t_2\leq u/3$. Then the number of antichains in $[t_1]\times [t_2]$ is at most $4^{u}$. Also, if $\beta\in [0,1/3]$, then the number of antichains of size $\beta u$ in $[t_1]\times [t_2]$ is at most $\exp_2(4\log_2 (1/\beta)\beta u)$.
\end{lemma}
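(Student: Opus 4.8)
The plan is to describe the antichains of a rectangle explicitly, count them exactly, and then finish with two crude binomial estimates.

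First I would pin down the structure of antichains in $[t_1]\times[t_2]$. If $A$ is an antichain, then no two of its elements can share a first coordinate (such a pair would be comparable through their second coordinates), and likewise no two can share a second coordinate. Writing $k=|A|$, the antichain $A$ therefore determines a $k$-subset $S_1\subseteq[t_1]$ (the first coordinates appearing in $A$), a $k$-subset $S_2\subseteq[t_2]$ (the second coordinates), and a bijection $f\colon S_1\to S_2$ matching them; the incomparability condition is then exactly equivalent to $f$ being strictly decreasing. Conversely, for any $k$-subsets $S_1\subseteq[t_1]$ and $S_2\subseteq[t_2]$ there is a unique strictly decreasing bijection $S_1\to S_2$, and the corresponding $k$ points form an antichain. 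Hence the number of antichains of $[t_1]\times[t_2]$ of size exactly $k$ is precisely $\binom{t_1}{k}\binom{t_2}{k}$.

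For the first bound I would simply sum over $k$: using $\sum_k\binom{t_i}{k}=2^{t_i}$ one gets that the total number of antichains is at most $2^{t_1}2^{t_2}=2^{t_1+t_2}$, and since $t_1,t_2\le u/3$ this is at most $2^{2u/3}\le 4^u$. (Alternatively one may invoke Vandermonde's identity to obtain the exact value $\binom{t_1+t_2}{t_1}$ and bound it by $2^{t_1+t_2}$.) For the second bound, set $k=\beta u$; we may assume $k\ge 1$, since otherwise the only antichain of size $0$ is the empty one and there is nothing to prove. Using $t_1,t_2\le u$ together with the standard estimate $\binom{u}{k}\le (eu/k)^k$, we get $\binom{t_1}{k}\binom{t_2}{k}\le\binom{u}{k}^2\le(e/\beta)^{2\beta u}$. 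Taking logarithms base $2$ and using that $\beta\le 1/3<1/e$ forces $\log_2 e<\log_2 3\le\log_2(1/\beta)$, we obtain $\log_2\!\bigl(\binom{t_1}{k}\binom{t_2}{k}\bigr)\le 2\beta u\bigl(\log_2(1/\beta)+\log_2 e\bigr)\le 4\log_2(1/\beta)\,\beta u$, as claimed.

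The whole argument is elementary, so I do not expect a genuine obstacle; the only points that need a little care are the structural characterization of antichains in a rectangle in the first step, and the bookkeeping in the last step, where the hypothesis $\beta\le\tfrac13$ is exactly what lets one absorb the additive $\log_2 e$ term into the factor $4$.
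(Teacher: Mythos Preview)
Your proof is correct and follows essentially the same approach as the paper's: both identify that an antichain of size $k$ in a rectangle is determined by its set of first coordinates and its set of second coordinates, giving the count $\binom{t_1}{k}\binom{t_2}{k}$, and both finish the second part with the bound $\binom{u}{k}^2\le(e/\beta)^{2\beta u}$ together with $\log_2 e<\log_2(1/\beta)$. The only cosmetic difference is that for the first part the paper embeds into $[u]\times[u]$ and uses $\sum_s\binom{u}{s}^2=\binom{2u}{u}\le 4^u$, whereas you stay in $[t_1]\times[t_2]$ and obtain the slightly sharper intermediate bound $2^{t_1+t_2}\le 2^{2u/3}$.
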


\begin{proof}
For any positive integer $s$, the number of antichains of size $s$ in  $[t_1]\times [t_2]$ is at most the number of antichains of size $s$ in $[u]\times [u]$. Each antichain of size $s$ in $[u]\times [u]$ is uniquely determined by the $s$ distinct rows and columns its elements are contained in, so the number of antichains is at most $\binom{u}{s}^2$. Hence, the number of antichains in $[u]\times [u]$ is 
$$\sum_{s=0}^{u}\binom{u}{s}^2=\binom{2u}{u}\leq 4^u.$$
Also, setting $s=\beta u$ with $\beta\in [0, 1/3]$, we have that the number of antichains of size $\beta u$ in $[t_1]\times [t_2]$ is at most
$$\binom{u}{s}^2\leq (e/\beta)^{2\beta u}= 2^{2\log_2(e/\beta) \beta u}<2^{-4 (\log_2 \beta)\beta u}.$$
\end{proof}

\subsection{Proof of Theorems \ref{thm:weak} and \ref{thm:small_antichains}} 
A classical result of Dilworth~\cite{dilworth50} states that in any finite poset $P$, the width $\alpha(P)$ of $P$ is equal to the size of the smallest chain decomposition of $P$. In order to prove the main theorems of this section, we shall use a partial strengthening of Dilworth's theorem due to the third author~\cite{tomon15}, which states that for a poset $P$ with the normalized matching property and a unimodal rank sequence, one can find a partition of $P$ into $\alpha(P)$ chains, each of which is not much shorter than the average length $\vert P\vert /\alpha(P)$. 
\begin{theorem}[Tomon \cite{tomon15}]\label{thm:chain_partition}
    Let $P$ be a poset with the normalized matching property and a unimodal rank sequence. Then $P$ can be partitioned into $\alpha(P)$ chains, each of which has size at least $\frac{|P|}{2\alpha(P)}-\frac{1}{2}$.
\end{theorem}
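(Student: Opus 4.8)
The plan is to build the chain partition greedily, level by level, using a regular chain cover of $P$ (which exists by Lemma~\ref{lemma:LYM}, since $P$ has the normalized matching property) as the engine that guarantees the chains are long on average, and then argue that one can arrange things so that no single chain is far below the average. Let me write $N(0),\dots,N(\ell-1)$ for the rank sequence of $P$, let $\alpha=\alpha(P)=\max_i N(i)$ (using the LYM property), and let $m$ be an index achieving $N(m)=\alpha$; by unimodality the sequence increases up to level $m$ and decreases afterwards.

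First I would partition $P$ into exactly $\alpha$ chains using the normalized matching property directly. The standard approach: by the normalized matching property, between any two consecutive levels $L(i)$ and $L(i+1)$ there is a ``normalized matching'', and in fact one can show (this is the classical consequence of the normalized matching / LYM property, essentially Dilworth applied in the symmetric-chain-decomposition style, or a direct flow/defect-Hall argument) that $P$ decomposes into $\alpha$ chains where each chain, once it appears at some level, continues contiguously through a block of consecutive levels. Concretely, I would process the levels in the unimodal order: going up from level $0$ to level $m$ the level sizes are nondecreasing, so I can extend all currently-active chains by one element into the next level (matching up, which is possible because normalized matching between consecutive levels of nondecreasing size gives a complete matching saturating the smaller class) and then start $N(i+1)-N(i)$ brand-new chains on the freshly uncovered vertices of $L(i+1)$; going down from level $m$ to level $\ell-1$ the sizes are nonincreasing, so I extend $N(i+1)$ of the active chains down and terminate the remaining $N(i)-N(i+1)$ of them. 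This yields exactly $\alpha$ chains, each of which occupies an interval of consecutive levels $[a,b]$ with $a\le m\le b$, i.e.\ every chain passes through the middle level $m$.

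The key point is then the length bound. A chain created at level $a$ (on the way up) and destroyed at level $b$ (on the way down) has length $b-a+1$, and it passes through level $m$. Since every element of $P$ lies in exactly one chain and every chain meets level $m$, I want to lower bound the length of the chain through a given $x\in L(m)$. Here I would use the freedom in choosing \emph{which} up-matchings and down-matchings to use: rather than fix them arbitrarily, select them according to a regular chain cover $\phi$ of $P$. If $\mathbf{C}$ is a $\phi$-random maximal chain, then for each level $i$, $\mathbf{C}$ passes through a uniformly random element of $L(i)$; by decomposing $P$'s maximal chains along level $m$ and averaging, the expected ``span'' of the chain-fragments one obtains is controlled by $\sum_i N(m)/N(i)\cdot(\text{something})$ — more precisely the total size $|P|=\sum_i N(i)$ is distributed among the $\alpha=N(m)$ chains, and because each chain runs through a contiguous block containing $m$, a short chain (one missing many levels) forces those missing levels' vertices onto other chains, but a counting/convexity argument using unimodality shows no chain can be shorter than $\frac{|P|}{2\alpha}-\frac12$: the worst case is balanced by the observation that the levels a chain skips near the top are compensated by the fact that near-the-ends levels are small. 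I expect the cleanest execution is: take the partition above, note the multiset of chain-lengths is $\{b_j-a_j+1\}$, that $\sum_j(b_j-a_j+1)=|P|$, that each $a_j\le m\le b_j$, and that the number of chains alive at level $i$ is exactly $N(i)$; then a chain alive on $[a,b]$ has length $\ge 1+\#\{i: N(i)\ge \text{its birth order}\}$, and minimizing length subject to $N(i)$ being unimodal with max $\alpha$ and $\sum N(i)=|P|$ gives the stated bound by a short extremal argument (pair up the ``latest-born'' chains with the smallest levels).

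The main obstacle is making that last extremal/averaging step rigorous and tight enough to get exactly $\frac{|P|}{2\alpha}-\frac12$ rather than a weaker constant: one must carefully use unimodality to argue that the chains that die earliest (shortest chains) are precisely those living near the middle where levels are largest, so a short chain is ``surrounded'' by many other chains and the small levels at the two ends are all absorbed by the long chains that survive to the extremes — quantifying this trade-off is where the factor $2$ and the $-\tfrac12$ come from, and is the delicate heart of the proof.
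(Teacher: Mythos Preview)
Note first that the present paper does not prove Theorem~\ref{thm:chain_partition}: it is quoted from~\cite{tomon15} and used as a black box, so there is no proof here to compare against.

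Your sketch has a genuine gap, and the approach cannot work as written. The level-by-level construction you describe produces $\alpha$ \emph{skipless} chains, each occupying a contiguous interval of levels through $m$, but skipless partitions need not satisfy the bound. Take $P$ with rank sequence $1,\dots,1,\alpha,1,\dots,1$ (one level of size $\alpha$, all others of size $1$), where the unique element at each small level is comparable to every element of the large level; this $P$ has the normalized matching property and a unimodal rank sequence. In any skipless partition of $P$ into $\alpha$ chains, at most two chains can extend beyond level $m$ (each level $i\ne m$ has a single element, lying in just one chain), so at least $\alpha-2$ chains are singletons. Yet if the number of levels exceeds $2\alpha+1$ then $|P|/(2\alpha)-\tfrac12>1$, forcing every chain to have length at least $2$. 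The theorem does hold here --- via a partition into \emph{non}-skipless chains, distributing the small-level elements among the $\alpha$ chains --- but your construction cannot produce one.

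Your ``short extremal argument'' thus amounts to the false claim that $|\{i:N(i)=\alpha\}|\geq |P|/(2\alpha)-\tfrac12$. The appeal to a regular chain cover is also a non sequitur: that is a probability distribution on maximal chains, not a partition, and it does not confer the freedom to prescribe which chains terminate at a given step (the normalized matching property guarantees a complete matching from the smaller of two consecutive levels into the larger, but not onto a \emph{prescribed} subset of the larger). Tomon's argument in~\cite{tomon15} requires a genuinely different rebalancing idea that your sketch does not supply.
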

\noindent Using this theorem, we show below that $[t]^n$ can be partitioned into large rectangles. This idea has previously been used (albeit in a somewhat more general form) in the context of extremal~\cite{T19} and packing~\cite{T20} problems for posets.
\begin{lemma}\label{lemma:rectangle_partition}
For every $t,n\geq 2$, $[t]^n$ has a partition into sets $R_1,\dots,R_s$ such that $s=\Theta(t^{n-2}/n)$, and each $R_i$ is isomorphic to $[c_i]\times [c_i']$ for some $c_i,c_i'=\Theta(t\sqrt{n})$.
\end{lemma}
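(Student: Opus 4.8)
The plan is to apply Theorem~\ref{thm:chain_partition} not directly to $[t]^n$ but to a suitable Cartesian factor of it, and then to glue the resulting chains back together with the complementary factor to produce rectangles. Concretely, write $n = a+b$ with $a=\lceil n/2\rceil$ and $b=\lfloor n/2\rfloor$, so that $[t]^n \cong [t]^a \times [t]^b$. By Lemmas~\ref{lemma: log-concavity of hypergrid rank sequence} and~\ref{lemma:normalized_matching}, both $[t]^a$ and $[t]^b$ have log-concave (hence unimodal) rank sequences and the normalized matching property. Apply Theorem~\ref{thm:chain_partition} to $P=[t]^a$: this yields a partition of $[t]^a$ into $\alpha(t,a)$ chains $C_1,\dots,C_p$, each of size at least $\tfrac{t^a}{2\alpha(t,a)}-\tfrac12 = \Theta(t^a/\alpha(t,a)) = \Theta(t\sqrt a) = \Theta(t\sqrt n)$, where I have used $\alpha(t,a)=\Theta(t^{a-1}/\sqrt a)$. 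Do the same for $[t]^b$, obtaining chains $D_1,\dots,D_q$ with $q=\alpha(t,b)$ and $|D_j| = \Theta(t\sqrt n)$.

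Next, observe that $\{C_i \times D_j : 1\le i\le p,\ 1\le j\le q\}$ is a partition of $[t]^a\times[t]^b \cong [t]^n$ into $pq = \alpha(t,a)\alpha(t,b) = \Theta\!\left(\tfrac{t^{a-1}}{\sqrt a}\cdot\tfrac{t^{b-1}}{\sqrt b}\right) = \Theta(t^{n-2}/n)$ pieces. Each piece $C_i\times D_j$ is the Cartesian product of two chains, hence isomorphic to $[c_i]\times[c_j']$ where $c_i=|C_i|$ and $c_j'=|D_j|$; and by the size bounds above we have $c_i, c_j' = \Theta(t\sqrt n)$, as required. Re-indexing the pieces as $R_1,\dots,R_s$ with $s=pq=\Theta(t^{n-2}/n)$ completes the construction.

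The only genuinely delicate point is the lower bound $t\sqrt n = O(|C_i|)$ and $t\sqrt n = O(|D_j|)$ on the chain sizes, rather than the upper bound (which is immediate since a chain in $[t]^a$ has at most $(t-1)a+1 = O(t n)$ elements — this is a bit too large, so one must be slightly more careful: a chain in $[t]^a$ has size at most $(t-1)a+1$, so to get the matching $O(t\sqrt n)$ upper bound one should instead note that Theorem~\ref{thm:chain_partition} produces $\alpha(t,a)$ chains covering $t^a$ elements, but does not bound them from above; to force an upper bound one can first refine each long chain into subchains of length $\Theta(t\sqrt n)$, which only increases the number of pieces by a constant factor and keeps $s=\Theta(t^{n-2}/n)$). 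Once this refinement is in place, each $|C_i|$ and $|D_j|$ lies in an interval $[\,c_1 t\sqrt n,\ c_2 t\sqrt n\,]$ for absolute constants $0<c_1<c_2$, and the product bound $c_1^2 t^2 n \le |C_i\times D_j| \le c_2^2 t^2 n$ follows. I expect the bookkeeping around the $\lceil n/2\rceil$ versus $\lfloor n/2\rfloor$ split (which introduces only bounded multiplicative distortion in all the $\Theta(\cdot)$ estimates) and the chain-refinement step to be the main, though routine, obstacles; everything else is a direct assembly of Theorem~\ref{thm:chain_partition}, the product structure of $[t]^n$, and the width estimate $\alpha(t,n)=\Theta(t^{n-1}/\sqrt n)$.
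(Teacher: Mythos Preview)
Your proposal is correct and follows essentially the same approach as the paper: split $[t]^n\cong[t]^{\lceil n/2\rceil}\times[t]^{\lfloor n/2\rfloor}$, apply Theorem~\ref{thm:chain_partition} to each factor, refine over-long chains into subchains of length $\Theta(t\sqrt{n})$, and take all pairwise products. The paper carries out exactly this argument, including the refinement step you flagged (it cuts any chain of length exceeding $t^{n_1}/\alpha(t,n_1)$ into pieces of length between $\tfrac{t^{n_1}}{2\alpha(t,n_1)}-\tfrac12$ and $t^{n_1}/\alpha(t,n_1)$), so your anticipated obstacles and their resolutions match.
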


\begin{proof}
Let $n_1=\lfloor n/2\rfloor$, $n_2=\lceil n/2\rceil$, and identify the $n$-dimensional hypergrid $[t]^n$ with the Cartesian product $[t]^{n_1}\times [t]^{n_2}$. Recall that by Lemmas~\ref{lemma: log-concavity of hypergrid rank sequence} and~\ref{lemma:normalized_matching}, the hypergrids $[t]^{n_1}$ and $[t]^{n_2}$ have unimodal rank sequences and the normalized matching property. We may thus apply Theorem~\ref{thm:chain_partition}: writing  $w_1=\alpha(t,n_1)$, there exists a partition of $[t]^{n_1}$ into chains $D_1,\dots,D_{w_1}$, each of which has length at least $t^{n_1}/(2w_1) -1/2$. If $|D_j|> t^{n_1}/w_1$, partition $D_j$ into subchains, each of which has size between $\frac{t^{n_1}}{2w_1}-\frac{1}{2}$ and $t^{n_1}/w_1$. In this way, we obtain a partition of $[t]^{n_1}$ into chains $C_1,\dots,C_{k_1}$ with some $k_1$ such that for $j\in \{1,\dots,k_1\}$,
$$|C_j|=\Theta\left(\frac{t^{n_1}}{w_1}\right)=\Theta\left(t\sqrt{n_1}\right)=\Theta\left(t\sqrt{n}\right),$$
where the second equality holds by Lemma \ref{lemma:middle_level}. This implies in particular that $k_1=\Theta(t^{n_1-1}/\sqrt{n})$. Similarly, we can partition $[t]^{n_2}$ into chains $C_1',\dots,C_{k_2}'$, each of size $\Theta(t\sqrt{n})$, where $k_2=\Theta(t^{n_2-1}/\sqrt{n})$. The rectangular sets $C_j\times C_{j'}$ then form the desired partition for $(j,j')\in \{1,\dots,k_1\}\times\{1,\dots,k_2\}$, and we have $s=k_1k_2=\Theta(t^{n-2}/n)$ of them in total, as desired.
\end{proof}
\noindent We refer to the partitions of $[t]^n$ into rectangles whose existence is guaranteed by Lemma~\ref{lemma:rectangle_partition} as \emph{good rectangular partitions}.
\begin{proof}[Proof of Theorem \ref{thm:weak}]
Let $R_1,\dots,R_s$ be a good rectangular partition of $[t]^n$. Let $A_{j}$ be the number of antichains of $R_{j}$. Then $A(t,n)\leq \prod_{j=1}^{s}A_j.$ But $R_j$ is isomorphic to $[c_j]\times [c_j']$ for some $c_j,c_j'=O(t\sqrt{n})$, so $A_{j}=2^{O(t\sqrt{n})}$ by Lemma \ref{lemma:2D}. Therefore, 
$$\log_2 A(t,n)=O(s\cdot (t\sqrt{n}))=O\left(\frac{t^{n-1}}{\sqrt{n}}\right)=O(\alpha(t,n)).$$
\end{proof}

\begin{proof}[Proof of Theorem \ref{thm:small_antichains}]
Consider two cases depending on $k$. If $k\geq n^{1/4}t^{1/2}$, then we can bound the number of antichains of size at most  $s=\lfloor \alpha(t,n)/k\rfloor$ by counting all subsets of $[t]^{n}$ of size at most $s$. Thus, we get the upper bound
$$\sum_{i=0}^{s}\binom{t^{n}}{i}\leq 2\binom{t^n}{s}\leq 2\left(\frac{et^n}{s}\right)^{s}\leq (O(t\sqrt{n}k))^{s}=\exp_2(O(s\log k)),$$
where the third inequality holds by $\alpha(t,n)=\Theta(t^{n-1}/\sqrt{n})$, and the last equality holds by our lower bound on $k$. Hence, in this case we are done.

In what follows, assume that $k\leq  n^{1/4}t^{1/2}$. Let $R_1,\dots,R_{s}$ be a good rectangular partition of $[t]^n$. Then each $R_j$ is isomorphic to $[c_j]\times [c_j']$ with $c_j,c_j'=O(t\sqrt{n})$. Set $u=3 \max\{c_1,\dots,c_s,c_1',\dots,c_s'\}$, and note that $u=O(t\sqrt{n})$. Let $r\leq \alpha(t,n)/k$ be an integer, and let $r=\sum_{j=1}^{s}r_{j}$ be a partition of $r$. Let us count the number of antichains $A\subset [t]^n$ such that $|A\cap R_j|=r_{j}$. By Lemma \ref{lemma:2D}, we get that this number is at most 
\begin{equation}\label{equ:thm_antichains}
    \exp_2\left(-4u\sum_{j=1}^s\frac{r_j}{u}\log_2 \frac{r_j}{u}\right).
\end{equation}
As the function $f(x)=-x\log_2 x$ is concave, and $\sum_{j=1}^s \frac{r_j}{u}=\frac{r}{u}$, we get by Jensen's inequality that (\ref{equ:thm_antichains}) is at most
$$\exp_2\left(-4us\cdot \frac{r}{s\cdot u}\log_2\frac{r}{s\cdot u}\right)=\exp_2\left(4r\log_2\frac{s\cdot u}{r}\right)=\exp_2\left(O\left(\frac{\alpha(t,n)}{k}\log_2 k\right)\right).$$
Here, in the last equality, we used that $s=O(t^{n-2}/n)$, $u=O(t\sqrt{n})$, and so $su=O(t^{n-1}/\sqrt{n})=O(\alpha(t,n))$. 

Note that there are at most $\alpha(t,n)/k$ choices for $r$, and that the number of partitions of $r$ into $s$ non-negative integers is at most 
\begin{align*}
\binom{r+s-1}{s-1}  <  \binom{t^n}{s}\leq& \left(\frac{e t^{n}}{s}\right)^{s}\leq (O(t^2 n))^{O(t^{n-2}/n)}=\exp_2\left(O\left(\frac{(\log t+\log n)t^{n-2}}{n}\right)\right)\\
    =&\exp_2\left(O\left(\alpha(t,n)\cdot \frac{\log(t)+\log (n) }{t\sqrt{n}}\right)\right) =\exp_2\left(O\left(\frac{\alpha(t,n)}{k}\log_2 k\right)\right).
\end{align*}
Here, the last equality holds by our upper bound on $k$.  This finishes the proof.
\end{proof}

In the rest of our paper, we may (and will implicitly) assume that $n$ is sufficiently large.  We can deal with small $n$ using Theorem \ref{thm:weak} if we choose $c$ to be sufficiently large in Theorem \ref{thm:main}.

\section{Distribution of levels in the hypergrid}\label{sect:levels}

In this section, we present a number of bounds about the distribution of the level sizes of $[t]^n$. Many of these bounds are rather technical in nature and do not contribute much to the main ideas of the paper, and a reader may safely skip over this part on their first reading. 

Let $N(0),\dots,N((t-1)n)$ denote the sizes of the levels of $[t]^n$.
\begin{lemma}\label{lemma:level_upperbound}
For every integer $r$ such that  $|r|>t$,
$$N\left(\frac{(t-1)n}{2}-r\right)\leq t^{n-1}\cdot e^{-r^2/(2t^2(n-1))}.$$
\end{lemma}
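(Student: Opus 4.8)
The plan is to reformulate $N(m)$ probabilistically and apply a Chernoff/Hoeffding tail bound, with one extra idea --- conditioning on a single coordinate --- that is exactly what produces the factor $t^{n-1}$, rather than $t^n$, in front.

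First, by the symmetry $N(j)=N((t-1)n-j)$ of the rank sequence of $[t]^n$ (and since the claimed right-hand side is even in $r$), I may assume $r>0$, hence $r>t\geq 2$. Write $m=\frac{(t-1)n}{2}-r$; I may further assume $m$ is a non-negative integer, as otherwise $N(m)=0$ and there is nothing to prove (note $m<(t-1)n$ automatically since $r>0$). Next I split off the last coordinate: writing a point of $[t]^n$ as $(x',x_n)$ with $x'\in[t]^{n-1}$ and $x_n\in[t]$, and letting $N'(\cdot)$ denote the level sizes of $[t]^{n-1}$, one has $N(m)=\sum_{j=0}^{t-1}N'(m-j)$. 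Let $X_1,\dots,X_{n-1}$ be independent random variables, each uniform on $\{0,\dots,t-1\}$, and set $S=X_1+\dots+X_{n-1}$; then $N'(\ell)=t^{n-1}\,\mathbb{P}(S=\ell)$ for every integer $\ell$, so
$$N(m)=t^{n-1}\sum_{j=0}^{t-1}\mathbb{P}(S=m-j)=t^{n-1}\,\mathbb{P}\bigl(S\in\{m-t+1,\dots,m\}\bigr)\leq t^{n-1}\,\mathbb{P}(S\leq m).$$
This step is the heart of the matter: a sum of $t$ consecutive point probabilities of $S$ is still at most $1$, which is how we gain the factor $t$.

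It remains to bound $\mathbb{P}(S\leq m)$. The mean of $S$ is $\mu=\frac{(t-1)(n-1)}{2}$, and $m-\mu=\frac{t-1}{2}-r$, which is negative since $r>t$; put $s=r-\frac{t-1}{2}>0$, so that $\mathbb{P}(S\leq m)=\mathbb{P}(S\leq\mu-s)$. Each $X_i$ takes values in an interval of length $t-1$, so Hoeffding's inequality yields $\mathbb{P}(S\leq\mu-s)\leq\exp\bigl(-\frac{2s^2}{(n-1)(t-1)^2}\bigr)$. An elementary computation then shows $\frac{2s^2}{(n-1)(t-1)^2}\geq\frac{r^2}{2t^2(n-1)}$: clearing denominators and taking square roots, this reduces to $2ts\geq r(t-1)$, that is, $2t\bigl(r-\frac{t-1}{2}\bigr)\geq r(t-1)$, which rearranges to $r(t+1)\geq t(t-1)$, and the latter holds since $r>t$. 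Chaining the three displayed bounds gives $N(m)\leq t^{n-1}e^{-r^2/(2t^2(n-1))}$, as required.

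The single substantive choice is the conditioning step; without it, a direct Hoeffding bound on the sum of all $n$ coordinates only gives $t^n\,e^{-\Omega(r^2/(t^2n))}$ in front, and recovering the missing factor of $t$ would require a local central limit estimate for the point probability, which is more delicate. With the conditioning in hand the remaining work is pure bookkeeping --- checking that the constants align --- which is exactly where the hypothesis $|r|>t$ is used, both to guarantee $s>0$ and to validate $r(t+1)\geq t(t-1)$. I do not anticipate any difficulty beyond this.
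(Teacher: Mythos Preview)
Your proof is correct and follows essentially the same approach as the paper: reduce to $r>0$ by symmetry, split off the last coordinate to express $N(m)$ as a sum of $t$ consecutive level sizes of $[t]^{n-1}$, bound this by the cumulative tail $t^{n-1}\mathbb{P}(S\le m)$, and apply Hoeffding's inequality. The only cosmetic difference is in the final algebra---the paper crudely uses $r-\tfrac{t-1}{2}\ge r/2$ before Hoeffding, whereas you keep $s=r-\tfrac{t-1}{2}$ and verify the inequality $2ts\ge r(t-1)$ afterwards; both lead to the same bound.
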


\begin{proof}
As $N\left(\frac{(t-1)n}{2}+r\right)= N\left(\frac{(t-1)n}{2}-r\right)$, it is enough to consider the case $r>t$. Let $N'(0),\dots,N'((t-1)(n-1))$ be the sizes of the levels of $[t]^{n-1}$, and set $N'(i)=0$ if $i<0$ or $i>(t-1)(n-1)$. Letting $m=(t-1)n/2$ and considering the possible value of the last coordinate of a vector $x\in L(i)$ , we then have 
$$N(m-r)=\sum_{i=m-r-t+1}^{m-r}N'(i)\leq  \sum_{i=0}^{m-r}N'(i).$$
Let $X_1$ be a random element of $[t]$ taken from the uniform distribution, and let $X_2,\dots,X_{n-1}$ be independent copies of $X_1$. Set $X=X_1+\dots+X_{n-1}$, then
$$\sum_{i=0}^{m-r}N'(i)=t^{n-1}\cdot \mathbb{P}\left(X\leq \mathbb{E}(X)+\frac{t-1}{2}-r\right)\leq t^{n-1}\cdot \mathbb{P}\left(X\leq \mathbb{E}(X)-\frac{r}{2}\right).$$
By Hoeffding's inequality, the probability on the right-hand-side is at most $e^{-r^2/(2t^2(n-1))}$, finishing the proof.
\end{proof}
\noindent  The remainder of this section is devoted to the proof of the following technical result, which will be needed to estimate certain flows on $[t]^n$ later.
\begin{lemma} \label{lemma:a+b=c+d bound}
Let $k$ be an integer such that $|k-(t-1)n/2|\leq tn^{2/3}$. Let $\Lambda$ be the maximum of $N\left(z_1\right)N\left(z_2\right)-N\left(z_3\right)N\left(z_4\right)$
among all integers $z_1, z_2, z_3, z_4$ satisfying  $k-t+1\leq z_1,z_2,z_3,z_4\leq k+1$ and $z_1+z_2=z_3+z_4$, and
let $N_{min}$ be the minimum of $N\left(k-t+1\right),\dots,N\left(k+1\right)$.
Then $$\frac{\Lambda}{N_{min}^{2}}=O\left(\frac{1}{n}\right).$$
\end{lemma}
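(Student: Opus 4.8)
The plan is to exploit log-concavity of the rank sequence to control the differences $N(z_1)N(z_2)-N(z_3)N(z_4)$, combined with quantitative bounds on how fast consecutive level sizes change in the central window $|k-(t-1)n/2|\le tn^{2/3}$. Write $m=(t-1)n/2$ for the middle level. First I would record two facts. (i) By Lemma~\ref{lemma: log-concavity of hypergrid rank sequence} the sequence $N(\cdot)$ is log-concave, so whenever $z_1\le z_3\le z_4\le z_2$ with $z_1+z_2=z_3+z_4$ we have $N(z_1)N(z_2)\le N(z_3)N(z_4)$; hence the maximum $\Lambda$ is attained when $\{z_1,z_2\}$ is the \emph{outermost} pair and $\{z_3,z_4\}$ the \emph{innermost} pair compatible with a fixed sum, and in all cases the four indices differ by at most $t-1$ (since all lie in $[k-t+1,k+1]$). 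Thus $\Lambda\le \max\bigl(N(a)N(b)-N(a+1)N(b-1)\bigr)$ over $a\le b$ with $a,b\in[k-t+1,k+1]$, telescoping, so it suffices to bound a single ``one-step'' difference $N(a)N(b)-N(a+1)N(b-1)$ and multiply by $O(t)$ — actually one can bound $\Lambda$ directly by the largest such gap times the number of steps, which is $O(t)$.

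The heart of the matter is therefore to show $N(a)N(b)-N(a+1)N(b-1)=O\bigl(N_{\min}^2/(nt)\bigr)$ for $a\le b$ both within distance $t$ of each other and both within distance $O(tn^{2/3})$ of $m$. Write this difference as $N(a)N(b-1)\bigl(\tfrac{N(b)}{N(b-1)}-\tfrac{N(a+1)}{N(a)}\bigr)$. So I need (a) a uniform bound $N(a),N(b-1)=O(N_{\min})$ on the central window — which follows since on a window of width $O(t+tn^{2/3})=O(tn^{2/3})$ around the mode, Lemma~\ref{lemma:level_upperbound} shows the levels vary by at most a factor $e^{O(n^{1/3})}$... which is too weak; instead I would use a \emph{lower} bound on $N_{\min}$ together with an \emph{upper} bound $N(m)=O(t^{n-1}/\sqrt n)$, and show the ratio $N(\cdot)/N(m)$ stays bounded above and below by absolute constants on a window of width $O(t\sqrt n)$ around $m$ — this needs a local central limit / Gaussian-type estimate for the sum of $n$ i.i.d.\ uniform variables on $[t]$, which is exactly what Section~\ref{sect:levels} is designed to provide; and (b) the key estimate that the consecutive ratio $\rho(i):=N(i+1)/N(i)$ changes slowly: $|\rho(b-1)-\rho(a)|=O\bigl(\tfrac{|b-1-a|+|a-m|}{t^2 n}\bigr)\cdot$(something), since near the mode $\rho(i)\approx 1-\Theta\bigl(\tfrac{i-m}{t^2 n}\bigr)$ and $|a-m|=O(tn^{2/3})$, $|b-1-a|=O(t)$. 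Multiplying, $N(a)N(b-1)\cdot|\rho(b-1)-\rho(a)| = O(N_{\min}^2)\cdot O\bigl(\tfrac{tn^{2/3}}{t^2 n}\bigr)=O\bigl(N_{\min}^2/(t n^{1/3})\bigr)$, and then the extra factor of $O(t)$ from telescoping over the $\le t$ steps gives $\Lambda=O(N_{\min}^2/n^{1/3})$, which is even stronger than needed; being more careful (the telescoped gaps do not all have the worst-case size, or using that $b-a\le t$ makes $|\rho(b-1)-\rho(a)|=O(|a-m|/(t^2n))$ without a $t$ in the numerator) should comfortably yield the claimed $O(1/n)$.

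Concretely, the steps in order: (1) reduce $\Lambda$ to one-step differences via log-concavity and telescoping; (2) use the probabilistic description $N(i)=t^{n-1}\,\mathbb P(S=i-\text{shift})$ with $S$ a sum of i.i.d.\ uniforms on $\{0,\dots,t-1\}$ (as in the proof of Lemma~\ref{lemma:level_upperbound}), and derive a two-sided local estimate $N(i)=\Theta(t^{n-1}/\sqrt n)$ and a smoothness estimate for $N(i+1)-N(i)$ and for $N(i+1)/N(i)$ valid on $|i-m|=O(tn^{2/3})$; (3) plug these into $N(a)N(b)-N(a+1)N(b-1)=N(a)N(b-1)(\rho(b-1)-\rho(a))$, bound each factor, and conclude; (4) divide by $N_{\min}^2=\Theta(t^{2n-2}/n)$. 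The main obstacle is step (2): establishing sufficiently precise two-sided control of the level sizes and of their consecutive ratios in the window $|i-m|\le tn^{2/3}$ — i.e.\ a good enough local central limit theorem with explicit error terms for sums of bounded i.i.d.\ integer variables, uniformly in $t$. Everything else (log-concavity reduction, telescoping, and the final arithmetic) is routine once those estimates are in hand; I would expect precisely these local estimates to be the technical content foreshadowed by the remark that ``these bounds are crucial for showing that the flow\dots is sufficiently regular.''
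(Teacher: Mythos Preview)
Your approach differs from the paper's and could in principle be made to work, but there is a real arithmetic slip and a genuine gap in the key estimate. First the slip: $O(1/n^{1/3})$ is \emph{weaker}, not ``even stronger'', than the target $O(1/n)$. The reason you land there is that you bound $|\rho(b-1)-\rho(a)|$ by $O(|a-m|/(t^2n))$, which only records that each $\rho(i)$ lies within $O(|i-m|/(t^2n))$ of $1$; with $|a-m|\le tn^{2/3}$ this gives merely $O(1/(tn^{1/3}))$. To make the telescoping deliver $O(1/n)$ you actually need the second-order fact that $\rho(i)-\rho(i+1)=O(1/(t^2n))$ uniformly on the window, so that $|\rho(a)-\rho(b)|=O(|a-b|/(t^2n))=O(1/(tn))$; then each one-step difference is $O(N_{\min}^2/(tn))$ and summing at most $t$ of them gives $O(N_{\min}^2/n)$. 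Establishing that uniform $O(1/(t^2n))$ bound on consecutive ratio gaps is exactly the hard analytic input, and a basic local CLT with $O(1/\sqrt n)$ relative error will not deliver it---you need Edgeworth-level smoothness of the error term (or equivalently a third-derivative bound). Also, a minor point: by log-concavity the maximum $\Lambda$ is attained with $\{z_1,z_2\}$ the \emph{inner} pair, not the outer one; your ``hence'' sentence has this backwards, though it does not affect the telescoping itself.

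The paper avoids log-concavity and telescoping altogether. It exponentially tilts the uniform-on-$[t]$ distribution (choosing $p$ so that the centred sum $X$ has mean $0$), writes $\mathbb P(X=s-k)=\alpha^{-n}p^sN(s)$, and interpolates these point masses by the holomorphic function $f(x)=\tfrac{1}{2\pi}\int_{-\pi}^\pi e^{-ixy}\varphi(y)^n\,dy$. Direct Fourier estimates give $f=\Theta(1/(t\sqrt n))$ on $[-t,t]$, $\|f'\|_\infty=O(1/(t^2n))$ and $\|f''\|_\infty=O(1/(t^3n^{3/2}))$. A second-order Taylor expansion of $f(a_1)f(a_2)-f(a_3)f(a_4)$ around $0$ then uses $a_1+a_2=a_3+a_4$ to kill the linear part and yields $O(1/(t^2n^2))$; dividing by $f(m-k)^2=\Theta(1/(t^2n))$ gives $\Lambda/N_{\min}^2=O(1/n)$ in one shot. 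The $\|f''\|_\infty$ bound is the moral analogue of the ratio-gap estimate your route needs; the Fourier--Taylor packaging just exploits the cancellation from $a_1+a_2=a_3+a_4$ directly, rather than through $O(t)$ telescoped pieces each of which would require a sharper individual bound than the final one.
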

The main idea in the proof of this lemma is to define an integer-valued random variable $X$  so that the distribution of $X$  `captures' the distribution of the sizes of the levels of $[t]^n$ in a certain sense, and so that most of the weight of $X$'s distribution is concentrated around $k$. We then use tools from the study of \emph{local limit theorems} (see e.g.\  \cite[Section 3.5]{durrett}) and analytic techniques to estimate quantities of the form $$\mathbb{P}(X=a_1)\cdot \mathbb{P}(X=a_2)-\mathbb{P}(X=a_3)\cdot \mathbb{P}(X=a_4).$$


Having outlined the strategy, we now delve into the details. Let $k$ be an integer with $k\in\left[\frac{n\left(t-1\right)}{2}-tn^{2/3},\frac{n\left(t-1\right)}{2} + tn^{2/3}\right]$, and set $q=\frac{n\left(t-1\right)}{2}-k$. Given a real number $p\geq 0$, let $X_1=X_1(p), X_2, \dots,X_n$ be i.i.d.\ random variables with distribution 
 \begin{align}\label{eq:def_of_X_i}
\mathbb{P}\left(X_{1}=s\right)=\alpha^{-1}\cdot p^s\ \ \ \mbox{ for }s=0,\dots,t-1,
\end{align}
 where $\alpha=\alpha(p):=\sum_{j=0}^{t-1}p^j$ is the normalizing factor. The expectation of $X_1$ is
$$\mu(p):=\mathbb{E}(X_1)=\frac{1}{\alpha(p)}\cdot \sum_{j=0}^{t-1}j\cdot p^{j}.$$
It is a simple exercise to verify that $\mu(p)$ is continuous, strictly monotone increasing in $\mathbb{R}_{\geq 0}$ and satisfies $\mu(0)=0$, $\mu(1)=\frac{t-1}{2}$ and $\mu(p)\rightarrow t-1$ as $p\rightarrow \infty$. In particular, there exists a unique $p=p(k,n)\geq 0$ such that $\mu(p)=\frac{k}{n}$ is satisfied. Finally, we define $X$ by setting 
\begin{align}\label{eq:def_of_X}
X=\left(\sum_{j=1}^{n}X_{j}\right)-k,
\end{align}
and note that $\mathbb{E}(X)=0$. A key observation is that the distribution of $X$ is closely related to that of the levels of $[t]^n$. Indeed, we have $$\mathbb{P}(X=s-k)=\alpha^{-n}\cdot p^s\cdot N(s).$$ 
As a first step in our proof of Lemma~\ref{lemma:a+b=c+d bound}, we show that $p=p(k,n)$ is close to $1$. 
\begin{lemma}\label{lemma:p}
For $n$ sufficiently large, we have 
$$\vert 1-p\vert = O\left(\frac{|q|}{t^{2}n}\right).$$ 
\end{lemma}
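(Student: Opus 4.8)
The plan is to show $p=p(k,n)$ is close to $1$ by analyzing the equation $\mu(p)=k/n$ that defines it. First I would record that since $\mu(1)=\tfrac{t-1}{2}$ and $k/n = \tfrac{t-1}{2} - q/n$, the defining equation says $\mu(p) - \mu(1) = -q/n$. Thus it suffices to understand the behaviour of $\mu$ near $p=1$: if $\mu'(1)$ is bounded below by a constant multiple of $t^2$ (which is what one expects, since $\mu'(1)$ is essentially the variance of a uniform random variable on $\{0,\dots,t-1\}$, namely $\tfrac{t^2-1}{12}$), and $\mu$ is sufficiently regular in a neighbourhood of $1$, then inverting gives $|1-p| = O\!\bigl(|q|/(t^2 n)\bigr)$.

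The key steps, in order, are as follows. (i) Compute $\mu'(p)$ explicitly by differentiating $\mu(p) = \alpha(p)^{-1}\sum_{j=0}^{t-1} j p^j$; one gets $\mu'(p) = \tfrac{1}{p}\bigl(\mathbb{E}(X_1^2) - \mathbb{E}(X_1)^2\bigr) = \tfrac{1}{p}\mathrm{Var}(X_1(p))$, which is manifestly positive. (ii) Evaluate at $p=1$: here $X_1(1)$ is uniform on $\{0,\dots,t-1\}$, so $\mathrm{Var}(X_1(1)) = \tfrac{t^2-1}{12} = \Theta(t^2)$, hence $\mu'(1) = \Theta(t^2)$. (iii) Show $\mu'$ does not vary too wildly near $1$ — e.g. that $\mathrm{Var}(X_1(p)) = \Theta(t^2)$ uniformly for $p$ in a fixed interval around $1$ such as $[1/2, 2]$, by crude estimates on the moments of $X_1(p)$ (the probabilities $\mathbb{P}(X_1=s) = p^s/\alpha(p)$ stay within a constant factor of $1/t$ across the range of $s$ when $p \in [1/2,2]$, since the extreme values $p^0$ and $p^{t-1}$... wait, no — for $p$ bounded away from $1$ by a constant, $p^{t-1}$ can be exponentially small or large in $t$). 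I would instead argue more carefully that $\mu$ is strictly increasing and continuous (already noted in the excerpt), and use the mean value theorem on the interval between $p$ and $1$ together with a lower bound $\mu'(\xi) = \Omega(t^2)$ valid whenever $\xi$ lies between $p$ and $1$ and $|1-\xi|$ is small; to bootstrap, first establish the crude bound $|1-p| = o(1)$ (say, $|1-p| \leq 1/t$, say) directly from $|\mu(p)-\mu(1)| = |q|/n \leq tn^{2/3}/n = t n^{-1/3}$ being small and $\mu$ being Lipschitz-ish, and then on that small interval $\mathrm{Var}(X_1(\xi))=\Theta(t^2)$ follows since $\xi^s$ stays within a constant factor of $1$ for all relevant $s$... which again needs $|1-\xi|\cdot t$ bounded. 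So the bootstrap should first yield $|1-p| = O(n^{-1/3})$ which is NOT enough to control $t(1-p)$ for large $t$.

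The main obstacle, therefore, is handling the regime where $t$ is large: a first crude estimate only gives $|1-p|$ small in absolute terms, not $t|1-p|$ small, so one cannot immediately claim $\mathrm{Var}(X_1(p)) = \Theta(t^2)$ from proximity to $p=1$. I expect the right fix is to lower-bound $\mathrm{Var}(X_1(p))$ by $\Omega(t^2)$ for all $p$ in a range like $[1 - c/t,\, 1 + c/t]$ — on such a range $p^{s} \in [e^{-c}, e^{c}]$ for every $s \in \{0,\dots,t-1\}$, so the distribution of $X_1(p)$ is within a constant factor of uniform and its variance is $\Theta(t^2)$ — and then to show that $p$ itself must lie in this range. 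The latter follows because $\mu$ restricted to $[1-c/t, 1+c/t]$ has derivative $\Omega(t^2)$, hence $\mu$ sweeps through an interval of length $\Omega(t)$ around $\mu(1) = \tfrac{t-1}{2}$, while $\mu(p) - \mu(1) = -q/n$ has magnitude at most $tn^{-1/3} \ll t$; by the intermediate value theorem and monotonicity of $\mu$, the unique solution $p$ lies inside $[1-c/t, 1+c/t]$. Once $p$ is pinned to this interval, the mean value theorem with $\mu'(\xi) = \mathrm{Var}(X_1(\xi))/\xi = \Theta(t^2)$ for $\xi$ between $p$ and $1$ gives $|q|/n = |\mu(p)-\mu(1)| = \Theta(t^2)\cdot|1-p|$, i.e. $|1-p| = O\!\bigl(|q|/(t^2 n)\bigr)$, as required. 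I would present the variance estimates as short elementary lemmas and keep the moment computations terse, since they are routine.
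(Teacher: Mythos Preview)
Your approach is correct and genuinely different from the paper's. You work through the mean value theorem: compute $\mu'(p)=\mathrm{Var}(X_1(p))/p$, bootstrap to trap $p$ in $[1-c/t,\,1+c/t]$ (where the distribution of $X_1$ is within constant factors of uniform, so the variance is $\Theta(t^2)$), and then invert $|\mu(p)-\mu(1)|=|q|/n$ to get the bound. The paper instead avoids the bootstrap entirely by a direct comparison: it fixes $x=100q/(t^2n)$, expands $\mu(1-x)$ via Bernoulli's inequality to show $\mu(1-x)\leq \frac{t-1}{2}\bigl(1-\frac{(t+1)x}{6}+O(x^2t^2)\bigr)\leq k/n$ (using that $xt\to 0$), and concludes $1-p\leq x$ by monotonicity of $\mu$; the case $q<0$ is symmetric. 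Your argument is more conceptual and makes the role of the variance transparent, while the paper's is shorter and more self-contained, needing no two-stage localization of $p$. Both are fine; if you want to streamline, note that your bootstrap can be replaced by the paper's trick of simply evaluating $\mu$ at the target point $1\pm C|q|/(t^2n)$ and comparing.
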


\begin{proof}
 First, consider the case when  $q \geq 0$. As $\mu(1)=\frac{t-1}{2}$, the monotonicity of $\mu$ implies that we must have $p \leq 1$, and hence it suffices to prove that $1-p=O(|q|/t^2n)$. Let $x\in [0,1]$, then Bernoulli's inequality implies that for any positive integer $j$,
$$(1-x)^j \geq 1-jx$$
and by a simple generalization of Bernoulli's inequality, 
$$(1-x)^j \leq 1 - jx + \frac{j(j-1)}{2}x^2.$$
Thus, it follows that  
$$ \mu(1-x) \leq \frac{\sum_{j=0}^{t-1} \left(j - j^2x + \frac{j^2(j-1)}{2}x^2\right)}{\sum_{j=0}^{t-1} \left(1 - jx\right)}    
    = \frac{t-1}{2}\cdot \left(1 - \frac{(t+1)x}{6} + O(x^2t^2)\right),$$
where we are using the fact that $\frac{1}{1-\frac{t-1}{2}x} = 1 + \frac{t-1}{2}x + O(x^2t^2)$. Set $x=100q/(t^2n)$. Then $xt \rightarrow 0$ as $n \rightarrow \infty$, and thus we get $\mu(1-x)\leq \frac{t-1}{2}-\frac{k}{n}=\mu(p)$ for sufficiently large $n$. By the monotonicity of $\mu$, we must have $1-p\leq x = O\left(\frac{q}{t^2n}\right)$. The case when $q < 0$ follows in a similar manner. 
\end{proof}

The characteristic function of the random variable $X_{1}-\frac{k}{n}$
is given by the function $$\varphi\left(y\right):=\mathbb{E}e^{iy(X_1-\frac{k}{n})}= \frac{1}{\alpha}\cdot \sum_{j=0}^{t-1} p^je^{iy\left(t-\frac{k}{n}\right)},$$
and the characteristic function of $X$ is then given by the $n$-fold product $\varphi\left(y\right)^{n}$.
Define $f:\mathbb{C}\rightarrow\mathbb{C}$ by setting
\begin{align}\label{eq: f-def}
f\left(x\right):=\frac{1}{2\pi}\int_{-\pi}^{\pi}e^{-ixy}\varphi\left(y\right)^{n}dy.
\end{align}
Then $f$ is clearly holomorphic, and by the Fourier inverse theorem, we
have $$\mathbb{P}\left(X=m\right)=f\left(m\right)$$ for every $m$ integer. Furthermore, the restriction of $f$ to $\mathbb{R}$ is a real-valued function, infinitely differentiable as a real-valued function, and its real derivative coincides with its complex derivative. Thus, we have 
\[
f'\left(x\right)=\frac{1}{2\pi}\int_{-\pi}^{\pi}-iye^{-ixy}\varphi\left(y\right)^{n}dy
\]
 and 

\[
f''\left(x\right)=\frac{1}{2\pi}\int_{-\pi}^{\pi}-y^{2}e^{-ixy}\varphi\left(y\right)^{n}dy.
\]
Recall that our aim is to bound expressions of the form $N\left(z_1\right)N\left(z_2\right)-N\left(z_3\right)N\left(z_4\right)$,
where $z_1+z_2=z_3+z_4$ and $z_1,z_2,z_3, z_4$ are all close to $k$. We shall do this by relating the quantities $N(z_i)$ to the values of $f$ at the points $a_i=z_i-k$, and using Taylor's approximation theorem to bound the size of the difference $f(a_1)f(a_2)-f(a_3)f(a_4)$. To do this, we will need some bounds on the magnitude of $f$, $f'$ and $f''$ near $0$, which are given in Lemma~\ref{lemma:derivatives} below. Recall that for a function $g$, the \emph{supremum norm} of $g$ is defined as
$$\left\Vert g\right\Vert _{\infty}:=\sup_{x\in\mathbb{R}}\left|g\left(x\right)\right|.$$

\begin{lemma} \label{lemma:derivatives}
We have  
\begin{enumerate}
\item $f(x)=\Theta\left(\frac{1}{t\sqrt{n}}\right)$ for $x\in\left[-t,t\right]$,
\item $\left\Vert f'\right\Vert _{\infty}=O\left(\frac{1}{t^{2}n}\right)$,
\item $\left\Vert f''\right\Vert _{\infty}=O\left(\frac{1}{t^{3}n^{3/2}}\right)$. 
\end{enumerate}
\end{lemma}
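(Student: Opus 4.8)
The function $f$ is exactly the kind of smoothed density that arises in a \emph{local central limit theorem} for the sum $X+k=\sum_{j=1}^{n}X_{j}$, and the plan is to estimate the inverse-Fourier integral $f(x)=\frac{1}{2\pi}\int_{-\pi}^{\pi}e^{-ixy}\varphi(y)^{n}\,dy$ by the standard route (see e.g.\ \cite[Section 3.5]{durrett}): show that $|\varphi(y)|^{n}$ is concentrated in a window of width $O\big(1/(t\sqrt n)\big)$ about $y=0$, that on this window $\varphi(y)^{n}$ is Gaussian up to a factor $1+o(1)$ while $e^{-ixy}=1+o(1)$ whenever $|x|\le t$, and that the rest of $[-\pi,\pi]$ contributes only $o\big(1/(t\sqrt n)\big)$. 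Since $f'(x)=\frac{1}{2\pi}\int_{-\pi}^{\pi}(-iy)e^{-ixy}\varphi(y)^{n}\,dy$ and $f''(x)=\frac{1}{2\pi}\int_{-\pi}^{\pi}(-y^{2})e^{-ixy}\varphi(y)^{n}\,dy$, the estimates $\|f^{(j)}\|_{\infty}\le\frac{1}{2\pi}\int_{-\pi}^{\pi}|y|^{j}|\varphi(y)|^{n}\,dy$ then reduce items (2) and (3) to the same understanding of where $|\varphi|^{n}$ lives.

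The first ingredient is a pair of single-variable estimates. Writing $\sigma^{2}=\mathrm{Var}(X_{1})$, the bound $|1-p|=O\big(|q|/(t^{2}n)\big)=O\big(n^{-1/3}/t\big)=o(1/t)$ from Lemma~\ref{lemma:p} shows that $X_{1}$ is a small perturbation of the uniform distribution on $\{0,\dots,t-1\}$; in particular $\mathbb{P}(X_{1}\le t/8)$ and $\mathbb{P}(X_{1}\ge 3t/8)$ are both $\Omega(1)$, so $\sigma^{2}=\tfrac12\,\mathbb{E}(X_{1}-X_{1}')^{2}=\Omega(t^{2})$ for an independent copy $X_{1}'$, while trivially $\sigma^{2}\le(t-1)^{2}$; hence $\sigma^{2}=\Theta(t^{2})$. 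Likewise $\mathbb{E}\,|X_{1}-\mathbb{E}X_{1}|^{j}=O(t^{j})$ for $j=3,4$, since $X_{1}$ lives in an interval of length $t$.

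The second ingredient is pointwise control of $\varphi$. Because $|e^{-iky/n}|=1$ and $X_{1}-X_{1}'$ is symmetric, one has $|\varphi(y)|^{2}=\mathbb{E}\cos\big(y(X_{1}-X_{1}')\big)=1-2\,\mathbb{E}\sin^{2}\!\big(\tfrac{y}{2}(X_{1}-X_{1}')\big)$, and applying $\sin^{2}z\ge\tfrac{4}{\pi^{2}}z^{2}$ (valid for $|z|\le\pi/2$, which holds on $|y|\le 1/t$ since $|X_{1}-X_{1}'|\le t$) gives $|\varphi(y)|^{2}\le 1-\Omega(\sigma^{2}y^{2})\le e^{-\Omega(t^{2}y^{2})}$, so $|\varphi(y)|\le e^{-c_{0}t^{2}y^{2}}$ for $|y|\le 1/t$, with $c_{0}>0$ absolute. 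A Taylor expansion of the cumulant generating function, $\log\varphi(y)=-\tfrac{\sigma^{2}}{2}y^{2}+O(t^{3}|y|^{3})$ for $|y|$ at most a small multiple of $1/t$ (using $\mathbb{E}(X_{1}-\tfrac{k}{n})=0$ and the moment bounds above), refines this to $\varphi(y)^{n}=e^{-n\sigma^{2}y^{2}/2}(1+o(1))$ uniformly for $|y|\le n^{-2/5}/t$. For $|y|$ bounded away from $0$, the geometric-sum identity $\sum_{j=0}^{t-1}p^{j}e^{ijy}=\frac{p^{t}e^{ity}-1}{pe^{iy}-1}$, together with $|pe^{iy}-1|\ge 2\sqrt p\,|y|/\pi$ for $|y|\le\pi$, $\sum_{j=0}^{t-1}p^{j}=\Theta(t)$ and $p^{t}=1+o(1)$, gives $|\varphi(y)|\le C/(t|y|)$ on $1/t\le|y|\le\pi$ for an absolute constant $C$; in particular $|\varphi(y)|\le 1/2$ once $|y|\ge A/t$ for a suitable absolute $A$, and an elementary estimate of the Dirichlet-type kernel shows $|\varphi(y)|\le 1-\delta$ with $\delta>0$ absolute on the remaining sliver $1/t\le|y|\le A/t$ as well.

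It remains to assemble these. By symmetry we integrate over $[0,\pi]$ and split at $\epsilon_{n}/t,\ 1/t,\ A/t$ with $\epsilon_{n}:=n^{-2/5}$. On $[0,\epsilon_{n}/t]$ we have $|xy|\le\epsilon_{n}=o(1)$ for $|x|\le t$, so $e^{-ixy}\varphi(y)^{n}=e^{-n\sigma^{2}y^{2}/2}(1+o(1))$, and extending the Gaussian to all of $\mathbb{R}$ (its tail beyond $\epsilon_{n}/t$ is negligible since $n\sigma^{2}\epsilon_{n}^{2}/t^{2}=\Theta(n^{1/5})\to\infty$) yields $\frac{1}{2\pi}\cdot 2\int_{0}^{\infty}e^{-n\sigma^{2}y^{2}/2}\,dy=\frac{1}{\sqrt{2\pi n\sigma^{2}}}(1+o(1))=\Theta\big(\tfrac{1}{t\sqrt n}\big)$; taking real parts and using that $f$ is real gives the main term of (1). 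The contribution of $[\epsilon_{n}/t,1/t]$ is $O\big(t^{-1}e^{-c_{0}n^{1/5}}\big)$ via $|\varphi|\le e^{-c_{0}t^{2}y^{2}}$; that of $[1/t,A/t]$ is $O\big(t^{-1}(1-\delta)^{n}\big)$ since the interval has length $<A/t$; and that of $[A/t,\pi]$ is $\int_{A/t}^{\pi}(C/(t|y|))^{n}\,dy=O\big(t^{-1}n^{-1}2^{-n}\big)$ by the substitution $u=t|y|/\pi$. All three are $o\big(1/(t\sqrt n)\big)$, completing (1). For (2) and (3), the same split of $[0,\pi]$ applied to $\frac{1}{2\pi}\int|y|^{j}|\varphi(y)|^{n}\,dy$ has central part $\int_{0}^{\infty}|y|^{j}e^{-c_{0}nt^{2}y^{2}}\,dy=\Theta\big(t^{-(j+1)}n^{-(j+1)/2}\big)$, that is $O(1/(t^{2}n))$ for $j=1$ and $O(1/(t^{3}n^{3/2}))$ for $j=2$, and the same tails are again of strictly smaller order (here one may take $n$ large, which is permitted).

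The step I expect to be the main obstacle is controlling $\varphi$ in the intermediate regime $1/t\lesssim|y|\lesssim 1$. There $|\varphi(y)|$ is merely bounded away from $1$, so $|\varphi(y)|^{n}$ is only $e^{-\Omega(n)}$, which does \emph{not} beat $1/(t\sqrt n)$ when $t$ is, say, exponentially large in $n$. What rescues the argument is that the sub-interval on which $|\varphi|$ is close to $1$ has length $O(1/t)$, so the dangerous factor $1/t$ cancels and leaves an exponentially small quantity, while on the rest of $[\Theta(1/t),\pi]$ one must invoke the genuinely decaying bound $|\varphi(y)|=O(1/(t|y|))$. The accompanying bookkeeping — checking that the constants $c_{0},C,\delta$ and the $\Theta(t^{2})$ in $\sigma^{2}$ are uniform over all $t\ge 2$ and over the perturbation $p=1+o(1/t)$, and disposing of the smallest values of $t$ directly if needed — is routine but must be carried out carefully.
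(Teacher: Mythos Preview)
Your approach is correct and closely mirrors the paper's: both bound $|\varphi(y)|$ by an exponential $e^{-\Omega(t^{2}y^{2})}$ near $y=0$ (the paper on $|y|\le 2.26/t$ via a direct Taylor expansion of $\varphi$, you on $|y|\le 1/t$ via the symmetrized form $|\varphi(y)|^{2}=\mathbb{E}\cos\big(y(X_{1}-X_{1}')\big)$) and by $O(1/(t|y|))$ away from $0$ via the geometric-sum identity for the Dirichlet-type kernel, and then integrate $|y|^{j}|\varphi(y)|^{n}$ on each piece for $j=0,1,2$. Your four-interval split is slightly finer than the paper's three; the sliver $[1/t,A/t]$ is in fact unnecessary, since the inequality $\sin^{2}z\ge(4/\pi^{2})z^{2}$ holds out to $|z|\le\pi/2$, so your exponential bound extends to $|y|\le\pi/(t-1)$, beyond which the $O(1/(t|y|))$ bound already gives $|\varphi|$ bounded away from $1$.

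The one genuine methodological difference is in part~1. You obtain the two-sided estimate $f(x)=\Theta(1/(t\sqrt n))$ by the direct local-CLT route: on the central window $|y|\le n^{-2/5}/t$ you expand $\log\varphi$ to third order, read off the Gaussian main term $(2\pi n\sigma^{2})^{-1/2}(1+o(1))$, and check that $e^{-ixy}=1+o(1)$ there when $|x|\le t$. The paper instead proves parts~2 and~3 first, then invokes the Berry--Esseen theorem to locate a single integer $x_{0}$ with $f(x_{0})=\Theta(1/(t\sqrt n))$, and uses the already-established bound $\|f'\|_{\infty}=O(1/(t^{2}n))$ together with $|x-x_{0}|=O(t)$ to transfer this to all $x\in[-t,t]$. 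Both routes yield the same conclusion; yours is the textbook analytic proof of a local limit theorem, while the paper's is more modular, trading the cumulant expansion for a black-box CLT plus a derivative estimate it needed anyway.
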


\begin{proof}
See the Appendix.
\end{proof}

In the next Lemma, we use Taylor's theorem to bound a difference of
the form $f\left(a_{1}\right)f\left(a_{2}\right)-f\left(a_{3}\right)f\left(a_{4}\right)$
where $a_{1}+a_{2}=a_{3}+a_{4}$ by using the estimates obtained in
the previous lemma. 
\begin{lemma}\label{lemma:a_1234}
Let $a_{1},\dots,a_{4}\in \left[-t,t\right]$ such that $a_{1}+a_{2}=a_{3}+a_{4}$. Then $$\left|f\left(a_{1}\right)f\left(a_{2}\right)-f\left(a_{3}\right)f\left(a_{4}\right)\right|=O\left(\frac{1}{n^2t^2}\right).$$
\end{lemma}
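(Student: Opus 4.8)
The plan is to reduce the difference $f(a_1)f(a_2)-f(a_3)f(a_4)$ to a sum of two terms, each of which is controlled by the derivative bounds from Lemma~\ref{lemma:derivatives}. Writing $b = a_1 + a_2 = a_3 + a_4$, the natural thing is to introduce the function $g(s) := f(s)f(b-s)$ on the interval $s \in [-t, t]$ (note $b \in [-2t, 2t]$, and all of $a_1, a_2, a_3, a_4$ lie in $[-t,t]$, so $b - a_i \in [-t, t]$ as well whenever $a_i \in [-t,t]$ — here one should double-check that the relevant arguments of $f$ all land in a window of radius $O(t)$ around $0$, which is exactly where the estimates in Lemma~\ref{lemma:derivatives} apply; since each $a_i \in [-t,t]$, every argument $f(a_i)$ is covered directly). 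Then $f(a_1)f(a_2) - f(a_3)f(a_4) = g(a_1) - g(a_3)$, and by the mean value theorem this equals $g'(\xi)(a_1 - a_3)$ for some $\xi$ between $a_1$ and $a_3$, with $|a_1 - a_3| \le 2t$.

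First I would compute $g'(s) = f'(s) f(b-s) - f(s) f'(b-s)$, so that $|g'(s)| \le 2 \|f'\|_\infty \cdot \|f\|_{\infty,[-t,t]}$ where I use the shorthand $\|f\|_{\infty,[-t,t]} := \sup_{x \in [-t,t]} |f(x)|$. By Lemma~\ref{lemma:derivatives}, parts 1 and 2, this gives $|g'(s)| = O\!\left(\frac{1}{t^2 n}\right) \cdot O\!\left(\frac{1}{t\sqrt n}\right) = O\!\left(\frac{1}{t^3 n^{3/2}}\right)$. Multiplying by $|a_1 - a_3| = O(t)$ yields $|f(a_1)f(a_2) - f(a_3)f(a_4)| = O\!\left(\frac{1}{t^2 n^{3/2}}\right)$, which is in fact slightly stronger than the claimed $O\!\left(\frac{1}{n^2 t^2}\right)$ bound whenever $n^{3/2} \le n^2$, i.e.\ always — so this crude approach already suffices, and I do not even need the second-derivative bound.

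Let me reconsider whether the claimed bound $O(1/(n^2 t^2))$ is what we actually get, since it is stronger than $O(1/(t^2 n^{3/2}))$. If the paper really wants $O(1/(n^2 t^2))$, the mean-value-theorem estimate above is not enough and one must use the second-order structure: expand $g$ to second order around the midpoint. The point is that the first-order term in $g(a_1) - g(a_3)$ may not vanish, but one can instead write $g(a_1) - g(a_3)$ using a symmetric expansion. Actually the cleaner route: since $a_1 + a_2 = a_3 + a_4 = b$, set $m = b/2$ and $a_1 = m + u$, $a_2 = m - u$, $a_3 = m + v$, $a_4 = m - v$. Then $f(a_1)f(a_2) = f(m+u)f(m-u)$ and by Taylor's theorem with remainder, $f(m \pm u) = f(m) \pm u f'(m) + \tfrac{u^2}{2} f''(\eta_{\pm})$, so $f(m+u)f(m-u) = f(m)^2 - u^2 f'(m)^2 + O(u^2 \|f\|_{\infty,[-t,t]} \|f''\|_\infty) + O(u^3 \|f'\|_\infty \|f''\|_\infty)$, and similarly for $v$. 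Subtracting, the $f(m)^2$ terms cancel, and one is left with $(v^2 - u^2) f'(m)^2$ plus lower-order junk. The main term is then bounded by $|v^2 - u^2| \cdot \|f'\|_\infty^2 = O(t^2) \cdot O\!\left(\frac{1}{t^4 n^2}\right) = O\!\left(\frac{1}{t^2 n^2}\right)$, using $|u|, |v| \le t$ hence $|v^2 - u^2| = O(t^2)$. The error terms involving $\|f''\|_\infty$ are of order $O(t^2) \cdot O\!\left(\frac{1}{t\sqrt n}\right) \cdot O\!\left(\frac{1}{t^3 n^{3/2}}\right) = O\!\left(\frac{1}{t^2 n^2}\right)$ as well, and the cubic term is even smaller. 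This gives exactly $O\!\left(\frac{1}{n^2 t^2}\right)$, as claimed.

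The main obstacle, such as it is, is purely bookkeeping: making sure every argument of $f$, $f'$, $f''$ that appears stays inside the window $[-t,t]$ (or a constant dilate of it) where Lemma~\ref{lemma:derivatives} applies — with the substitution $a_i = m \pm u$ and $|u|, |v| \le t$, the intermediate points $\eta_\pm$ lie in $[m-t, m+t] \subseteq [-2t, 2t]$, which would force me to either restate Lemma~\ref{lemma:derivatives}(2),(3) on $[-2t,2t]$ (the proof in the Appendix surely gives this with no change, since those are global $\|\cdot\|_\infty$ bounds anyway — indeed parts 2 and 3 are already stated as supremum norms over all of $\mathbb{R}$) or to note that parts 2 and 3 are global bounds and only part 1 is local, and part 1 is only ever applied at the points $a_i$ themselves, which are in $[-t,t]$ by hypothesis. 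So in fact no reformulation is needed: $\|f'\|_\infty$ and $\|f''\|_\infty$ are genuinely global, and $f(m)^2$, being the surviving "large" piece, cancels before any size estimate is invoked. I would therefore write the proof via the substitution $a_i = m \pm u, m \pm v$, apply the second-order Taylor expansion to $f(m \pm u)$ and $f(m \pm v)$, cancel the $f(m)^2$ terms, and bound the remaining $(v^2-u^2)f'(m)^2$ term together with the $\|f''\|_\infty$-error terms using Lemma~\ref{lemma:derivatives} and the bounds $|u|,|v|\le t$.
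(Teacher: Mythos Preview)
Your approach is correct and essentially the same as the paper's: both arguments Taylor-expand $f$ to second order and bound the surviving pieces using the three estimates of Lemma~\ref{lemma:derivatives}. The only cosmetic difference is the expansion point --- you expand around the midpoint $m=(a_1+a_2)/2$ and cancel $f(m)^2$ by symmetry, whereas the paper expands each $f(a_i)$ around $0$ and uses $a_1+a_2=a_3+a_4$ to kill the $f(0)f'(0)$ cross terms after multiplying out; the resulting error terms ($t^2\|f'\|_\infty^2$, $t^2|f|\cdot\|f''\|_\infty$, etc.) and their sizes are identical. One tiny bookkeeping correction: in your error term $u^2 f(m) f''(\eta_\pm)$ you do invoke part~1 of Lemma~\ref{lemma:derivatives} at $m$, not only at the $a_i$ --- but since $m=(a_1+a_2)/2\in[-t,t]$ this is harmless.
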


\begin{proof}
By Taylor's theorem, there exists $\xi_{i}\in\left[-t,t\right]$
for $i\in\left\{ 1,\dots,4\right\} $ so that $$f\left(a_{i}\right)=f\left(0\right)+a_{i}f'\left(0\right)+\frac{1}{2}a_{i}^{2}f''\left(\xi_{i}\right).$$
Using the condition $a_{1}+a_{2}=a_{3}+a_{4}$, we can write
\begin{align*}
f(a_{1})f(a_{2})-f(a_{3})&f(a_{4}) =(a_{1}a_{2}-a_{3}a_{4})f'(0)^{2}\\
 &+\frac{1}{2}f(0)\cdot(a_{1}^{2}f''(\xi_{1})+a_2^2f''(\xi_2)-a_3^2f''(\xi_3)-a_4^2f''(\xi_4))\\
 & +\frac{1}{2}f'(0)\cdot (a_{1}^{2}a_{2}f''(\xi_{1})+a_{1}a_{2}^{2}f''(\xi_{2})-a_{3}^{2}a_{4}f''(\xi_{3})-a_{3}a_{4}^{2}f''(\xi_{4}))\\
 & +\frac{1}{4}(a_{1}^{2}a_{2}^{2}f''(\xi_{1})f''(\xi_{2})-a_{3}^{2}a_{4}^{2}f''(\xi_{3})f''(\xi_{4})).
\end{align*}
Bound each term in the right-hand-side by its absolute value. Then, using $\left|a_{i}\right|\leq t$ for each $i$, 
\begin{align*}
    |f\left(a_{1}\right)f\left(a_{2}\right)&-f\left(a_{3}\right)f\left(a_{4}\right)|\\
    &\leq  O\left(t^{2}\left\Vert f'\right\Vert _{\infty}^{2}+t^{2}\left|f\left(0\right)\right|\left\Vert f''\right\Vert _{\infty}+t^{3}\left\Vert f'\right\Vert _{\infty}\left\Vert f''\right\Vert _{\infty}+t^{4}\left\Vert f''\right\Vert _{\infty}^{2}\right).
\end{align*}
Thus, Lemma \ref{lemma:derivatives} implies that $\left|f\left(a_{1}\right)f\left(a_{2}\right)-f\left(a_{3}\right)f\left(a_{4}\right)\right|=O\left(\frac{1}{t^{2}n^{2}}\right).$
\end{proof}

\begin{proof}[Proof of Lemma \ref{lemma:a+b=c+d bound}]
Let $z_1, z_2, z_3, z_4$ and $m$ be integers in $\left[k-t+1,k+1\right]$ with $z_1+z_2=z_3+z_4$ such that  $\Lambda=N\left(z_1\right)N\left(z_2\right)-N\left(z_3\right)N\left(z_4\right)$
and $N_{min}=N\left(m\right)$. Let $a_i=z_i-k$ for $i\in [z]$, let $X$ be the random variable and
$f$ be the function defined in~\eqref{eq:def_of_X} and~\eqref{eq: f-def} respectively.
 Recall that for every $s\in[(t-1)n]$, we have
\[
f\left(s-k\right)=\mathbb{P}\left(X=s-k\right)=\alpha^{-n}p^{s}N\left(s\right).
\]
Therefore, $$\Lambda=\alpha^{2n}p^{-\left(z_1+z_2\right)}\left(f\left(a_{1}\right)f\left(a_{2}\right)-f\left(a_{3}\right)f\left(a_{4}\right)\right),$$
and 
$$N_{min}^2=\alpha^{2n}p^{-2m}f\left(m-k\right)^{2}.$$
By Lemma \ref{lemma:p}, we have $|1 - p| = O\left(\frac{|q|}{t^{2}n}\right)$.   Since $\left|2m-\left(z_1+z_2\right)\right|\leq2t$ and  $q\leq tn^{2/3}$, it follows that for sufficiently large $n$,
$$ p^{2m-\left(z_1+z_2\right)}\leq e^{O\left(\frac{|q|}{tn}\right)} = O(1)$$ by using the estimate $1 + x \leq e^{x}$.
Thus, it follows from  1. in Lemma \ref{lemma:derivatives} (noting that $|m-k|\leq t$) and Lemma \ref{lemma:a_1234} that
\[
\frac{\Lambda}{N_{min}^{2}}=p^{2m-(z_1+z_2)}\cdot \frac{f(a_1)f(a_2)-f(a_3)f(a_4)}{f(m-k)^2}= O\left(\frac{1}{n}\right),
\]
as required. 
\end{proof}
\section{Normalized flows}\label{sect: flows}
\subsection{Normalized flows in Cartesian products}\label{sect:normalized_flow}
In this subsection, we discuss flows on graphs and investigate the normalized matching property in more detail. In particular, we use a method of Harper for constructing normalized flows in Cartesian products of posets to obtain a normalized flow in the hypergrid.

A \emph{weighted graph} is a pair $(G,\sigma)$, where $G$ is a graph, and $\sigma:V(G)\rightarrow \mathbb{R}_{\geq 0}$. A \emph{flow} on $(G,\sigma)$ is a function $f:E(G)\rightarrow \mathbb{R}_{\geq 0}$ such that $$\sigma(v)=\sum_{w: vw\in E(G)}f(vw)$$ for every $v\in V(G)$. If $G$ is a bipartite graph with vertex classes $A$ and $B$, the \emph{canonical weighting} of $G$ is defined as $\sigma_c(v)=\frac{1}{|A|}$ for every $v\in A$, and $\sigma_c(v)=\frac{1}{|B|}$ for every $v\in B$. We refer to a flow of $(G,\sigma_c)$ as a \emph{normalized matching flow} of $G$. A result of Kleitman relates the normalized matching property to the existence of normalized matching flows.
\begin{lemma}[Kleitman \cite{kleitman74}]\label{lemma:flow} A bipartite graph $G$ has the normalized matching property if and only if it has a normalized matching flow.
\end{lemma}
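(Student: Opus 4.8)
The plan is to prove the two implications separately. The forward direction (a normalized matching flow forces the normalized matching property) is a one-line double-counting argument; the reverse direction is where the work lies, and I would deduce it from the max-flow--min-cut theorem.

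\textbf{Flow implies normalized matching.} Suppose $f$ is a normalized matching flow of the bipartite graph $G$ with classes $A,B$, and fix $X\subseteq A$. Summing the defining identity $\sigma_c(v)=\sum_{w:\,vw\in E(G)}f(vw)$ over $v\in X$ gives
$$\frac{|X|}{|A|}=\sum_{v\in X}\sigma_c(v)=\sum_{vw\in E(G),\,v\in X}f(vw)\leq \sum_{w\in N(X)}\sum_{v:\,vw\in E(G)}f(vw)=\sum_{w\in N(X)}\sigma_c(w)=\frac{|N(X)|}{|B|},$$
since every edge contributing on the left has its $B$-endpoint in $N(X)$, and $f\geq 0$. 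This is exactly the normalized matching property.

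\textbf{Normalized matching implies flow.} I would model the problem as a capacitated $s$--$z$ network: introduce a source $s$ with an arc $s\to a$ of capacity $1/|A|$ for each $a\in A$, a sink $z$ with an arc $b\to z$ of capacity $1/|B|$ for each $b\in B$, and for each edge $ab\in E(G)$ an arc $a\to b$ of capacity larger than $1$ (say $+\infty$). The total capacity out of $s$ is $1$, so every feasible flow has value at most $1$; and a feasible flow of value exactly $1$ must saturate every arc out of $s$ and (by conservation, since the sink arcs also have total capacity $1$) every arc into $z$. Restricting such a flow to the middle arcs then yields a function $f:E(G)\to\mathbb{R}_{\geq 0}$ with $\sum_{w:\,vw\in E(G)}f(vw)=\sigma_c(v)$ for every $v\in A\cup B$, i.e.\ a normalized matching flow. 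So it suffices to exhibit a flow of value $1$, and by max-flow--min-cut this amounts to showing every $s$--$z$ cut has capacity at least $1$.

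\textbf{Bounding the cut.} Take a cut determined by a vertex set $S$ with $s\in S$, $z\notin S$, and set $Y=A\cap S$. If some middle arc $a\to b$ with $a\in Y$, $b\notin S$ were present, the cut would have capacity exceeding $1$; otherwise $N(Y)\subseteq S$, so $|B\cap S|\geq |N(Y)|$. The cut then contains the arcs $s\to a$ for $a\in A\setminus Y$ and the arcs $b\to z$ for $b\in B\cap S$, whence its capacity is at least
$$\frac{|A\setminus Y|}{|A|}+\frac{|B\cap S|}{|B|}\ \geq\ \Bigl(1-\frac{|Y|}{|A|}\Bigr)+\frac{|N(Y)|}{|B|}\ \geq\ 1,$$
where the last step applies the normalized matching property to $Y\subseteq A$. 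Hence the minimum cut is at least $1$, completing the proof. The only step requiring any care is the modelling choice of giving the middle arcs capacity greater than $1$, which is precisely what forces finite cuts to respect neighborhoods so that the normalized matching inequality can be invoked verbatim; beyond that, the argument is routine (one could alternatively phrase the reverse direction through LP duality or a fractional Hall-type theorem, but the network formulation minimizes bookkeeping).
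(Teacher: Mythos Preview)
Your proof is correct. The paper does not actually supply its own proof of this lemma: it is stated as a result of Kleitman~\cite{kleitman74} and simply cited, so there is nothing to compare against. Your argument is the standard one --- the forward direction by double-counting flow across the edges from $X$ to $N(X)$, and the reverse direction by modelling the problem as a max-flow instance and checking via max-flow--min-cut that the minimum cut has value at least $1$ using the normalized matching inequality --- and it goes through without issue.
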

It turns out to be convenient to work with a scaled version of the canonical weighting. Breaking the symmetry, define the  \emph{scaled canonical weighting} of a bipartite graph $G$ with classes labelled $A$ and $B$ as $\sigma_s(v)=1$ if $v\in A$, and $\sigma_s(v)=|A|/|B|$ if $v\in B$. A \emph{scaled normalized matching flow} (or SNMF for short) of $G$ is then a flow of $(G,\sigma_s)$, and we observe that $G$ has an SNMF if and only if it has a normalized matching flow.
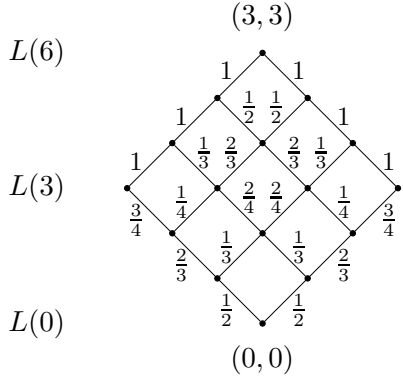
\begin{figure}
\begin{center}
\begin{tikzpicture}[scale=1.2]
    \node at (-2.5,0) {$L(0)$};
    \node[vertex,minimum size=2pt] (v00) at (0,0)  {}; \node at (0,-0.4) {$(0,0)$};
    \node at (-0.4,0.15) {\small $\frac{1}{2}$};
    \node at (0.4,0.15) {\small $\frac{1}{2}$};

    \node[vertex,minimum size=2pt] (v10) at (-0.5,0.5)  {};
    \node[vertex,minimum size=2pt] (v01) at (0.5,0.5)  {};
    \node at (-0.9,0.65) {\small $\frac{2}{3}$};
    \node at (-0.4,0.85) {\small $\frac{1}{3}$};
    \node at (0.9,0.65) {\small $\frac{2}{3}$};
    \node at (0.4,0.85) {\small $\frac{1}{3}$};
    
    \node[vertex,minimum size=2pt] (v20) at (-1,1)  {};
    \node[vertex,minimum size=2pt] (v11) at (0,1)  {};
    \node[vertex,minimum size=2pt] (v02) at (1,1)  {};
    \node at (-1.4,1.15) {\small $\frac{3}{4}$};
    \node at (-0.9,1.35) {\small $\frac{1}{4}$};
    \node at (-0.15,1.4) {\small $\frac{2}{4}$};

    \node at (1.4,1.15) {\small $\frac{3}{4}$};
    \node at (0.9,1.35) {\small $\frac{1}{4}$};
    \node at (0.15,1.4) {\small $\frac{2}{4}$};

    \node at (-2.5,1.5) {$L(3)$};
    \node[vertex,minimum size=2pt] (v30) at (-1.5,1.5)  {};
    \node[vertex,minimum size=2pt] (v21) at (-0.5,1.5)  {};
    \node[vertex,minimum size=2pt] (v12) at (0.5,1.5)  {};
    \node[vertex,minimum size=2pt] (v03) at (1.5,1.5)  {};
    
     \node at (-1.4,1.8) {\small $1$};
    \node at (-0.65,1.9) {\small $\frac{1}{3}$};
    \node at (-0.35,1.9) {\small $\frac{2}{3}$};

     \node at (1.4,1.8) {\small $1$};
    \node at (0.65,1.9) {\small $\frac{1}{3}$};
    \node at (0.35,1.9) {\small $\frac{2}{3}$};

    \node[vertex,minimum size=2pt] (v31) at (-1,2)  {};
    \node[vertex,minimum size=2pt] (v22) at (0,2)  {};
    \node[vertex,minimum size=2pt] (v13) at (1,2)  {};

    \node at (-0.9,2.3) {\small $1$};
    \node at (-0.15,2.4) {\small $\frac{1}{2}$};
    \node at (0.9,2.3) {\small $1$};
    \node at (0.15,2.4) {\small $\frac{1}{2}$};
    
    \node[vertex,minimum size=2pt] (v32) at (-0.5,2.5)  {};
    \node[vertex,minimum size=2pt] (v23) at (0.5,2.5)  {};

     \node at (-0.4,2.8) {\small $1$};
     \node at (0.4,2.8) {\small $1$};

    \node at (-2.5,3) {$L(6)$};
    \node[vertex,minimum size=2pt] (v33) at (0,3)  {}; \node at (0,3.4) {$(3,3)$};

    \draw[very thin] (v00) -- (v30) ;
    \draw[very thin] (v01) -- (v31) ;
    \draw[very thin] (v02) -- (v32) ;
    \draw[very thin] (v03) -- (v33) ;
    
    \draw[very thin] (v00) -- (v03) ;
    \draw[very thin] (v10) -- (v13) ;
    \draw[very thin] (v20) -- (v23) ;
    \draw[very thin] (v30) -- (v33) ;
    
\end{tikzpicture}
\caption{A scaled normalized matching flow (SNMF) of $[4]^2$.}
\label{fig:SNMF}
\end{center}
\end{figure}

Let $P$ be a graded poset, and write $E(P)$ for the edges of the cover graph of $P$. Writing $n$ for the number of levels of $P$, we say that a function $f: E(P)\rightarrow \mathbb{R}_{\geq 0}$ is a SNMF of $P$ if for $i=0,\dots,n-2$, the restriction of $f$ to the bipartite graph induced by the levels $L(i)\cup L(i+1)$ is an SNMF of that graph with $L(i)$ playing the role of the distinguished part $A$. See Figure \ref{fig:SNMF} for an example. By our earlier observation that a graded poset has the normalized matching property if and only consecutive levels have the normalized matching property, Lemma \ref{lemma:flow} implies that the poset $P$ has the normalized matching property if and only if it has an SNMF.

In the remainder of this subsection, our goal is to show that there exists an SNMF $f$ of $[t]^{n}$ such that for all edges $uv$ of the cover graph, $f(uv)$ is not too large. In order to construct such an SNMF of $[t]^n$, we follow a method of Harper~\cite{harper74}. Given two graded posets $P$ and $Q$ with log-concave rank-sequences, and given SNMF $f_P$ of $P$ and $f_Q$ of $Q$, Harper~\cite{harper74} showed how to construct an SNMF of the Cartesian product $P\times Q$. We present his method in the special case when $Q=[t]$, the chain of length $t$.

Let $P$ be a graded poset with $n$ levels and a log-concave rank sequence, and let $f_P$ be an SNMF of $P$. For ease of notation, we write $L_P(i)=\emptyset$ if $i<0$ or $i\geq n$. Setting $N_P(i)=\vert L_P(i)\vert =0$ for $i<0$ or $i\geq n$, we note that the infinite sequence $\dots,N_P(-1),N_P(0),N_P(1),\dots$ remains log-concave. Now the poset $R=P\times [t]$ is graded, and we have $$L_R(i)=\bigcup_{j=0}^{t-1}L_P(i-j)\times \{j\}$$ for every $i$. The cover graph of $R$ has two types of edges. If the pair $\{(x,i),(y,j)\}$ is an edge with $x,y\in P$, $i,j\in [t]$, then we say that it is a \emph{left edge} if $i=j$, and that it is a \emph{right edge} if $x=y$ (and $\vert i-j\vert=1$).

Now let $k$ be an integer such that $L_R(k)$ and $L_R(k+1)$ are non-empty, and consider the bipartite graph $G$ between $L_R(k)$ and $L_R(k+1)$. By collapsing the levels of $P$, we define a weighted bipartite graph $(H,\sigma)$ as follows. The vertex classes of $H$ are $A=\{a_0,\dots,a_{t-1}\}$ and $B=\{b_0,\dots,b_{t-1}\}$, and $a_i$ and $b_j$ are joined by an edge if $i=j$ or $i=j-1$. Set $\sigma(a_i)=N_P(k-i)$, and set $\sigma(b_i)=\frac{N_R(k)}{N_R(k+1)}\cdot N_P(k+1-i)$. One can think of $a_i$ as a collapse of the set $L_P(k-i)\times\{i\}$ to a single point, and similarly of $b_i$ as a collapse of the set $L_P(k+1-i)\times \{i\}$. See Figure \ref{figure:1} for an illustration. 

Suppose there exists a flow $g$ of $(H,\sigma)$. We can then define a function $f:E(G)\rightarrow \mathbb{R}_{\geq 0}$ as follows. If $e=\{(x,i),(y,i)\}$ is a left edge for some $x\in L_P(k-i)$, $y\in L_P(k+1-i)$, $i\in [t]$, we set $$f(e)=\frac{1}{N_P(k-i)}\cdot f_P(xy)\cdot g(a_ib_i).$$
On the other hand, if $e=\{(x,i),(x,i+1)\}$ is a right edge for some $x\in L_P(k-i)$, $i\in [t-1]$, we set $$f(e)=\frac{1}{N_P(k-i)}\cdot g(a_ib_{i+1}).$$
Note that if $e$ is a left edge, then using that $g$ is a flow, we have $g(a_ib_i)\leq \sigma(a_i)=N_P(k-i)$, and hence $f(e)\leq f_P(xy)$. Similarly, if $e$ is a right edge, then $g(a_ib_{i+1})\leq N_P(k-i)$ and $f(e)\leq 1$.

\begin{figure}
\begin{center}
    \begin{tikzpicture}
        \node[vertex,label=below:$a_0$] (a0) at (-4,-1) {}; \node at (-4,-2) {$N_P(k)$};
        \node[vertex,label=above:$b_0$] (b0) at (-5,1) {}; \node at (-5,2) {\small $\frac{N_R(k)}{N_R(k+1)}\cdot N_P(k+1)$};
        \node[vertex,label=below:$a_1$] (a1) at (-2,-1) {}; \node at (-2,-2) {$N_P(k-1)$};
		\node[vertex,label=above:$b_1$] (b1) at (-3,1) {};
        \node[vertex,label=above:$b_2$] (b2) at (-1,1) {};

        \node[vertex,label=above:$b_{t-2}$] (bt-) at (2,1) {};
        \node[vertex,label=below:$a_{t-2}$] (at-) at (3,-1) {}; 
        \node[vertex,label=below:$a_{t-1}$] (at) at (5,-1) {}; \node at (5,-2) {$N_P(k-t+1)$};
		\node[vertex,label=above:$b_{t-1}$] (bt) at (4,1) {};  \node at (4,2) {\small $\frac{N_R(k)}{N_R(k+1)}\cdot N_P(k-t+2)$};
        \node at (0.5,0) {$\dots$};
        \node at (-7,-1) {$A$};
        \node at (-7,1) {$B$};
 
        \draw (a0) edge (b0) ;
        \draw (a0) edge (b1) ;
        \draw (a1) edge (b1) ;
        \draw (a1) edge (b2) ;
        \draw (b2) edge (-0.5,0) ;

        \draw (1.5,0) edge (bt-);
        \draw (bt-) edge (at-);
        \draw (at-) edge (bt) ;
        \draw (bt) edge (at) ;

    \end{tikzpicture}
\end{center}
\caption{An illustration of the weighted bipartite graph $(H,\sigma)$.}
\label{figure:1}
\end{figure}
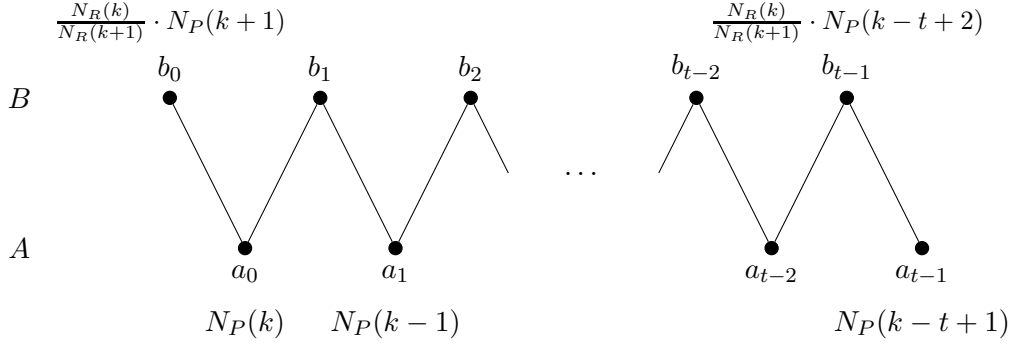

\begin{claim}\label{claim: f is an SNMF}
$f$ is an SNMF of $G$.
\end{claim}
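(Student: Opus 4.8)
The plan is to verify directly that the function $f$ defined on $E(G)$ satisfies the flow conservation equations with respect to the scaled canonical weighting $\sigma_s$ on $G$, where $L_R(k)$ plays the role of the distinguished part $A$. Concretely, I need to show two things: (i) for every vertex $(x,i)\in L_R(k)$, the total $f$-weight on edges leaving $(x,i)$ equals $\sigma_s((x,i))=1$; and (ii) for every vertex $(y,j)\in L_R(k+1)$, the total $f$-weight on edges at $(y,j)$ equals $\sigma_s((y,j))=N_R(k)/N_R(k+1)$. The edges incident to a given vertex split into left edges and right edges, and the definition of $f$ was rigged precisely so that summing picks up a factor coming from $f_P$ (an SNMF of $P$) and a factor coming from $g$ (a flow of $(H,\sigma)$).

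First I would handle the lower vertices $(x,i)\in L_R(k)$, so $x\in L_P(k-i)$. The edges at $(x,i)$ going up are: the left edges $\{(x,i),(y,i)\}$ with $y\in L_P(k+1-i)$ and $x\precdot_P y$, and (if $i\le t-2$) the single right edge $\{(x,i),(x,i+1)\}$. Summing $f$ over these,
\begin{align*}
\sum_{y:\, x\precdot_P y}\frac{1}{N_P(k-i)}f_P(xy)g(a_ib_i)+\frac{1}{N_P(k-i)}g(a_ib_{i+1})
&=\frac{g(a_ib_i)}{N_P(k-i)}\sum_{y:\,x\precdot_P y}f_P(xy)+\frac{g(a_ib_{i+1})}{N_P(k-i)}.
\end{align*}
Since $f_P$ is an SNMF of $P$ with $L_P(k-i)$ distinguished, $\sum_{y}f_P(xy)=\sigma_s(x)=1$ (here one must note $L_P(k-i)$ and $L_P(k+1-i)$ are the relevant consecutive levels; if $L_P(k+1-i)=\emptyset$ the left-edge sum is empty and $g(a_ib_i)=0$ automatically since $\sigma(b_i)=0$, so there is no issue). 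Thus the total is $\frac{1}{N_P(k-i)}\big(g(a_ib_i)+g(a_ib_{i+1})\big)=\frac{\sigma(a_i)}{N_P(k-i)}=1$, using that $g$ is a flow of $(H,\sigma)$ and $\sigma(a_i)=N_P(k-i)$. (When $i=t-1$ there is no right edge and also $a_{t-1}$ has only the neighbor $b_{t-1}$ in $H$, so $g(a_{t-1}b_{t-1})=\sigma(a_{t-1})$ and the computation still closes.)

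Next I would do the upper vertices $(y,j)\in L_R(k+1)$, so $y\in L_P(k+1-j)$. The edges at $(y,j)$ going down are: left edges $\{(x,j),(y,j)\}$ with $x\in L_P(k-j)$ and $x\precdot_P y$, and (if $j\ge 1$) the right edge $\{(y,j-1),(y,j)\}$ where $y\in L_P(k+1-j)=L_P(k-(j-1))$. Summing,
\begin{align*}
\sum_{x:\,x\precdot_P y}\frac{1}{N_P(k-j)}f_P(xy)g(a_jb_j)+\frac{1}{N_P(k-(j-1))}g(a_{j-1}b_j).
\end{align*}
Here I invoke that $f_P$ is an SNMF of $P$ on the levels $L_P(k-j)\cup L_P(k+1-j)$, so $\sum_{x:\,x\precdot_P y}f_P(xy)=\sigma_s(y)=N_P(k-j)/N_P(k+1-j)$; hence the first term equals $\frac{g(a_jb_j)}{N_P(k-j)}\cdot\frac{N_P(k-j)}{N_P(k+1-j)}=\frac{g(a_jb_j)}{N_P(k+1-j)}$. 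The second term is $\frac{g(a_{j-1}b_j)}{N_P(k+1-j)}$ since $k-(j-1)=k+1-j$. So the total is $\frac{1}{N_P(k+1-j)}\big(g(a_{j-1}b_j)+g(a_jb_j)\big)=\frac{\sigma(b_j)}{N_P(k+1-j)}=\frac{N_R(k)}{N_R(k+1)}$, using the flow property of $g$ at $b_j$ and the definition $\sigma(b_j)=\frac{N_R(k)}{N_R(k+1)}N_P(k+1-j)$. One should remark that $b_0$ has only the neighbor $a_0$, so the $j=0$ case has no right-edge term and closes analogously.

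The only genuine subtlety — and the thing I would be most careful about — is the bookkeeping of degenerate indices: levels $L_P(k-i)$ or $L_P(k+1-i)$ that are empty (so $N_P=0$), which makes some weights $\sigma(a_i)$ or $\sigma(b_i)$ vanish and the corresponding $g$-values zero, and also forces care with the convention $f(e)=\frac{1}{N_P(k-i)}(\dots)$ not dividing by zero — but whenever $N_P(k-i)=0$ the vertex $(x,i)$ simply does not exist, so that case never arises in the sum. I would state this once cleanly at the start so the two verification computations above go through without caveats. Everything else is the routine substitution shown above; the claim follows.
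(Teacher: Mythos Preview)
Your proposal is correct and follows essentially the same approach as the paper: directly verifying the flow conservation equations at vertices of $L_R(k)$ and $L_R(k+1)$ by summing over left and right edges and using that $f_P$ is an SNMF and $g$ is a flow of $(H,\sigma)$. Your treatment is in fact more careful than the paper's about the boundary cases $i=t-1$, $j=0$, and empty levels, which the paper handles implicitly.
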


\begin{proof}
Let $(x,i)\in L_R(k)$ with $i\in [t]$ and $x\in L_P(k-i)$. Then
\begin{align*}
    \sum_{(y,j)\sim_G (x,i)}f((x,i)(y,j))&=f((x,i)(x,i+1))+\sum_{x\precdot y}f_P((x,i)(y,i))\\
    &=\frac{1}{N_P(k-i)}\cdot g(a_ib_{i+1})+\sum_{x\precdot y}\frac{1}{N_P(k-i)}\cdot f_P(xy)\cdot g(a_ib_i)\\
    &=\frac{1}{N_P(k-i)}\cdot g(a_i b_{i+1})+\frac{1}{N_P(k-i)}\cdot g(a_ib_i)=\frac{g(a_ib_{i+1})+g(a_ib_i)}{\sigma(a_i)}=1.
\end{align*}
Here, the third and fourth equalities holds as $f_P$ is an SNMF on $P$ and $g$ is a flow of $(H, \sigma)$. Similarly, one can show that if $(x,i)\in L_R(k+1)$, then
$$\sum_{(y,j):\ (x,i)\sim_G(y,j)}f((x,i)(y,j))=\frac{1}{N_P(k+1-i)}(g(a_{i-1}b_{i})+g(a_ib_i))=\frac{N_R(k)}{N_R(k+1)}.$$
\end{proof}
Thus all that remains to show is that $(H,\sigma)$ has a flow $g$. As $H$ is a tree (more precisely a path), if a flow exists, it is unique, and can be easily calculated. Indeed, the only possibility is the function $g$ defined as
$$g(a_{\ell}b_{\ell})=\sigma(b_0)-\sigma(a_1)+\sigma(b_1)-\dots+\sigma(b_{\ell})=\frac{N_R(k)}{N_R(k+1)}\sum_{i=0}^{\ell}N_P(k+1-i)-\sum_{i=0}^{\ell-1}N_P(k-i),$$
for every $\ell\in [t]$, and
$$g(a_{\ell}b_{\ell+1})=\sigma(a_0)-\sigma(b_0)+\sigma(a_1)-\dots+\sigma(a_{\ell})-\sigma(b_{\ell})=\sum_{i=0}^{\ell}N_P(k-i)-\frac{N_R(k)}{N_R(k+1)}\sum_{i=0}^{\ell}N_P(k+1-i),$$
for every $\ell\in [t-1]$. This function $g$ clearly satisfies that 
\begin{itemize}
    \item $g(a_0b_0)=\sigma(b_0)$,
    \item  $g(a_{\ell}b_{\ell})+g(a_{\ell}b_{\ell+1})=\sigma(a_{\ell})$ for $\ell\in [t-1]$,
    \item $g(a_{\ell}b_{\ell})+g(a_{\ell-1}b_{\ell})=\sigma(b_{\ell})$ for $\ell\in \{1,\dots,t-1\}$, and
    \item $g(a_{t-1}b_{t-1})=\frac{N_R(k)}{N_R(k+1)} \cdot N_R(k+1)-(N_R(k)-N_P(k-t+1))=\sigma(a_{t-1})$.
\end{itemize}
To show that $g$ is a flow of $(H, \sigma)$, it suffices therefore to show that $g$ is non-negative.
\begin{claim}
$g(e)\geq 0$ for every $e\in E(H)$.
\end{claim}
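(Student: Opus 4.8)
I want to show that the unique candidate flow $g$ on the path $H$ is non-negative, i.e.\ that $g(a_\ell b_\ell)\ge 0$ for $\ell\in[t]$ and $g(a_\ell b_{\ell+1})\ge 0$ for $\ell\in[t-1]$. Writing $\lambda:=\frac{N_R(k)}{N_R(k+1)}$, the two quantities are
$$g(a_\ell b_\ell)=\lambda\sum_{i=0}^{\ell}N_P(k+1-i)-\sum_{i=0}^{\ell-1}N_P(k-i),\qquad g(a_\ell b_{\ell+1})=\sum_{i=0}^{\ell}N_P(k-i)-\lambda\sum_{i=0}^{\ell}N_P(k+1-i).$$
Set $S_\ell:=\sum_{i=0}^{\ell}N_P(k-i)$ and $T_\ell:=\sum_{i=0}^{\ell}N_P(k+1-i)$; then $g(a_\ell b_{\ell+1})=S_\ell-\lambda T_\ell$ and $g(a_\ell b_\ell)=\lambda T_\ell-S_{\ell-1}$, and since $S_\ell=S_{\ell-1}+N_P(k-\ell)$ the second inequality $g(a_\ell b_\ell)\ge0$ is equivalent to $g(a_{\ell-1}b_\ell)=S_{\ell-1}-\lambda T_{\ell-1}\le N_P(k+1-\ell)\cdot\lambda$ — so really the whole claim reduces to controlling the sign of $S_\ell-\lambda T_\ell$ for each $\ell$. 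Concretely I will prove the single chain of inequalities
$$0\ \le\ S_0-\lambda T_0\ \le\ \text{(stuff)}\ \le\ \cdots,$$
or more cleanly: show $\lambda\,T_\ell\ \le\ S_\ell$ AND $S_{\ell-1}\ \le\ \lambda\, T_\ell$ for all relevant $\ell$.

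**Key steps.** First I would record the endpoint identities already listed: $g(a_0b_0)=\sigma(b_0)=\lambda N_P(k+1)\ge 0$ trivially, and $g(a_{t-1}b_{t-1})=\sigma(a_{t-1})=N_P(k-t+1)\ge0$ trivially (using $T_{t-1}=N_R(k+1)$, $S_{t-1}=N_R(k)$ up to the boundary terms). So the extreme cases are free, and it remains to handle $0<\ell<t-1$. Second, the heart of the matter: I claim that for every $\ell$,
$$\frac{S_\ell}{T_\ell}\ \ge\ \lambda\ =\ \frac{S_{t-1}}{T_{t-1}}\qquad\text{and}\qquad \frac{S_{\ell-1}}{T_{\ell}}\ \le\ \lambda,$$
and both will follow from log-concavity of the (two-sided) sequence $N_P$ via the rearrangement fact quoted right after the definition of log-concavity: if $i<i'\le j'<j$ with $i+j=i'+j'$ then $a_ia_j\le a_{i'}a_{j'}$. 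The standard way to exploit this for ratios of partial sums is: log-concavity of $(N_P(k-i))_{i}$ implies that the sequence of ratios $N_P(k-i-1)/N_P(k-i)$ is monotone (non-decreasing in $i$), and a "mediant" / Chebyshev-type argument then shows that the partial-sum ratios $S_\ell/T_\ell$ — note $T_\ell=\sum_{i=0}^\ell N_P(k+1-i)$ is the same sum shifted by one index — form a monotone sequence in $\ell$. Since at $\ell=t-1$ the ratio equals exactly $\lambda$ (the global normalization), monotonicity pins down the sign on both sides: $S_\ell/T_\ell\ge\lambda$ for $\ell\le t-1$ gives $g(a_\ell b_{\ell+1})\ge0$, and the companion inequality $S_{\ell-1}\le\lambda T_\ell$ gives $g(a_\ell b_\ell)\ge0$. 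I would carry this out by an explicit induction on $\ell$: assuming $S_{\ell-1}\le\lambda T_{\ell-1}$ is false / true, add the new terms $N_P(k-\ell)$ and $\lambda N_P(k+1-\ell)$ and use that $N_P(k-\ell)/N_P(k+1-\ell)$ is a monotone ratio to propagate the inequality — this is where log-concavity enters in an essential way.

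**Main obstacle.** The only real work is the monotonicity of the partial-sum ratios $S_\ell/T_\ell$ from log-concavity; everything else (the boundary cases, unwinding the definition of $g$, reindexing) is bookkeeping. One has to be a little careful because the two sums $S_\ell$ and $T_\ell$ run over overlapping but shifted ranges of the $N_P$ sequence, and because some terms $N_P(k-i)$ may vanish (the padding $N_P(j)=0$ for $j<0$ or $j\ge n$) — but log-concavity is preserved under this padding (as noted in the excerpt), and a vanishing term only makes the relevant ratio inequalities degenerate in the favorable direction, so it causes no trouble. I would present the ratio-monotonicity as a short self-contained sub-lemma ("if $(a_j)$ is a non-negative log-concave sequence then $\sum_{i=0}^\ell a_{i}\big/\sum_{i=0}^\ell a_{i+1}$ is non-increasing in $\ell$"), prove it in two lines from the mediant inequality, and then apply it twice to finish.
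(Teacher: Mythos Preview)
Your plan is correct and reaches the same conclusion by a different packaging. The paper proceeds by direct algebraic expansion: it multiplies through by $N_R(k+1)$ and, for $g(a_\ell b_\ell)$, obtains a sum of two manifestly non-negative products (no log-concavity needed there), while for $g(a_\ell b_{\ell+1})$ it rewrites $N_R(k+1)\,g(a_\ell b_{\ell+1})$ as a sum of terms of the shape $N_P(a)N_P(b)-N_P(a')N_P(b')$ with $a+b=a'+b'$ and $\{a,b\}$ nested inside $\{a',b'\}$, each non-negative by log-concavity. Your monotonicity-of-ratios argument is a repackaging of the same termwise comparison: verifying $S_\ell T_{\ell+1}\ge S_{\ell+1}T_\ell$ unwinds to exactly such a sum. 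What your formulation buys is a cleaner conceptual statement (one monotone ratio sequence pinned at $\ell=t-1$); what the paper's buys is that the case $g(a_\ell b_\ell)\ge0$ is visibly trivial.

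Two points to tighten. First, there is a direction slip: with $a_i=N_P(k-i)$, log-concavity makes $a_{i+1}/a_i$ \emph{non-increasing} in $i$, so your sub-lemma should say that $\sum_{i\le\ell}a_i\big/\sum_{i\le\ell}a_{i+1}$ is \emph{non-decreasing} in $\ell$. This is harmless --- after the obvious reindexing it still yields that $S_\ell/T_\ell$ is non-increasing, which is exactly what you need for $g(a_\ell b_{\ell+1})\ge0$. Second, ``apply it twice'' is vague for the companion inequality $S_{\ell-1}\le\lambda T_\ell$, which is not of the same shape as your sub-lemma. It does follow, and in fact without log-concavity: since $T_\ell=S_{\ell-1}+N_P(k+1)$, one computes
\[
S_{\ell-1}T_{\ell+1}-S_\ell T_\ell \;=\; N_P(k-\ell)\bigl(S_{\ell-1}-T_\ell\bigr)\;=\;-\,N_P(k-\ell)\,N_P(k+1)\;\le\;0,
\]
so $S_{\ell-1}/T_\ell$ is non-decreasing in $\ell$ and bounded above by $S_{t-2}/T_{t-1}\le S_{t-1}/T_{t-1}=\lambda$. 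This matches the paper, where $g(a_\ell b_\ell)\ge0$ is the easy half.
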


\begin{proof}
To simplify notation, for integers $i_1, i_2$ write $S_{i_1,i_2}=\sum_{i=i_1}^{i_2}N_P(i)$. Then $N_R(k)=S_{k-t+1,k}$ and $N_R(k+1)=S_{k-t+2,k+1}$. First, consider the edges $a_{\ell}b_{\ell}$. We have
\begin{align*}
N_R(k+1)g(a_{\ell}b_{\ell})&=S_{k-t+1,k}S_{k+1-\ell,k+1}-S_{k-t+2,k+1}S_{k+1-\ell,k}\\
&=S_{k+1-\ell,k}\cdot N_P(k-t+1)+N_P(k+1)\cdot S_{k-t+1,k-\ell},
\end{align*}
which is clearly positive. On the other hand, we have
\begin{align}\label{equ:long}
\begin{split}
N_R(k+1)g(a_{\ell}b_{\ell+1})=&S_{k-t+2,k+1}S_{k-\ell,k}-S_{k-t+1,k}S_{k+1-\ell,k+1}\\
=&N_P(k-\ell)\cdot S_{k-t+2,k}-N_P(k+1)\cdot S_{k-t+2,k-\ell-1}\\
&-N_P(k-t+1)\cdot S_{k+1-\ell,k+1}\\
=&\sum_{i=k-t+2}^{k-\ell-1}[N_P(k-\ell)N_P(i+\ell+1)-N_P(k+1)N_P(i)]\\
&+\sum_{i=k+1-\ell}^{k+1}[N_P(k-\ell)N_P(i+\ell-t+1)-N_P(k-t+1)N_P(i)]
\end{split}
\end{align}
Using the log-concavity of the rank sequence of $P$, every term in each of the two sums in~\eqref{equ:long} is non-negative, whence $g(a_{\ell}b_{\ell+1})$ must also be non-negative.
\end{proof}
We conclude that $g$ is a flow on $(H, \sigma)$, and thus that the function $f$ we defined from $g$ does indeed give rise to an SNMF of $R=P\times [t]$. We record below some properties of $f$ that we shall need later.
\begin{lemma}\label{lemma:small_weights}
Let $(x,\ell)\in L_R(k)$.
\begin{itemize}
    \item[1.] If $x\precdot_{P} y$, then $f((x,\ell)(y,\ell))\leq f_P(xy)$.
    \item[2.] Let $\Lambda$ be the maximum of $N_P(z_1)N_P(z_2)-N_P(z_3)N_P(z_4)$ among all integers $z_1, z_2, z_3, z_4$ satisfying $k-t+1\leq z_1, z_2, z_3, z_4\leq k+1$ and $z_1+z_2=z_3+z_4$, and let $N_{min}$ be the minimum of $N_P(k-t+1),\dots,N_P(k+1)$. Then
    $$f((x,\ell)(x,\ell+1))\leq \frac{\Lambda}{(N_{min})^2}.$$
\end{itemize}
\end{lemma}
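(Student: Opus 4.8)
The first statement has essentially been recorded already in the construction preceding the lemma, so I would simply spell it out. If $e=\{(x,\ell),(y,\ell)\}$ is a left edge with $x\precdot_P y$, then by definition $f(e)=\frac{1}{N_P(k-\ell)}\cdot f_P(xy)\cdot g(a_\ell b_\ell)$. Since $H$ is a path and $g$ is a \emph{non-negative} flow on $(H,\sigma)$, we have $g(a_\ell b_\ell)\le\sigma(a_\ell)=N_P(k-\ell)$: for $\ell\le t-2$ this follows from the flow identity $g(a_\ell b_\ell)+g(a_\ell b_{\ell+1})=\sigma(a_\ell)$ together with $g(a_\ell b_{\ell+1})\ge0$, and for $\ell=t-1$ it is the identity $g(a_{t-1}b_{t-1})=\sigma(a_{t-1})$ recorded above. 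Hence $f(e)\le f_P(xy)$.

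For the second statement, fix $\ell\in[t-1]$ and $(x,\ell)\in L_R(k)$, and first note that $\Lambda\ge0$, since the choice $z_1=z_3,\ z_2=z_4$ yields a difference equal to $0$. By the definition of $f$ on a right edge, $f((x,\ell)(x,\ell+1))=g(a_\ell b_{\ell+1})/N_P(k-\ell)$, so it is enough to bound $g(a_\ell b_{\ell+1})$. The plan is to reuse the identity~\eqref{equ:long} derived in the proof that $g$ is non-negative: there $N_R(k+1)\,g(a_\ell b_{\ell+1})$ is written as the sum over $i\in\{k-t+2,\dots,k-\ell-1\}$ of the terms $N_P(k-\ell)N_P(i+\ell+1)-N_P(k+1)N_P(i)$, plus the sum over $i\in\{k+1-\ell,\dots,k+1\}$ of the terms $N_P(k-\ell)N_P(i+\ell-t+1)-N_P(k-t+1)N_P(i)$. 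A direct check of the summation ranges shows that each term has the shape $N_P(z_1)N_P(z_2)-N_P(z_3)N_P(z_4)$ with $z_1+z_2=z_3+z_4$ and all four indices in $[k-t+1,k+1]$, so each term is at most $\Lambda$; and the two sums together consist of $(t-\ell-2)+(\ell+1)=t-1$ terms (the first sum being empty when $\ell=t-2$). Hence $N_R(k+1)\,g(a_\ell b_{\ell+1})\le(t-1)\Lambda$.

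It then remains to handle the denominators. Since $N_R(k+1)=\sum_{i=k-t+2}^{k+1}N_P(i)$ is a sum of $t$ of the $t+1$ quantities $N_P(k-t+1),\dots,N_P(k+1)$, we get $N_R(k+1)\ge t\cdot N_{min}$; and trivially $N_P(k-\ell)\ge N_{min}$. Therefore
$$f((x,\ell)(x,\ell+1))=\frac{g(a_\ell b_{\ell+1})}{N_P(k-\ell)}\le\frac{(t-1)\Lambda}{t\,N_{min}\cdot N_{min}}\le\frac{\Lambda}{N_{min}^2},$$
which is the desired bound.

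The whole argument is essentially bookkeeping on top of the identity~\eqref{equ:long} that has already been established for a different purpose (proving non-negativity). The only point I expect to require a little care — and it is the closest thing here to an obstacle — is verifying that the indices $i+\ell+1$, $i+\ell-t+1$, etc.\ appearing in the two sums stay inside $[k-t+1,k+1]$; this is an easy consequence of $0\le\ell\le t-2$. I do not anticipate any substantive difficulty.
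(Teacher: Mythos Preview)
Your proof is correct and follows essentially the same route as the paper: for part 1 you reproduce the observation recorded just above Claim~\ref{claim: f is an SNMF}, and for part 2 you bound $N_R(k+1)\,g(a_\ell b_{\ell+1})$ via the identity~\eqref{equ:long} as a sum of $t-1$ terms each at most $\Lambda$, then combine with $N_R(k+1)\ge t\,N_{min}$ and $N_P(k-\ell)\ge N_{min}$. Your version is slightly more careful in checking the index ranges and the term count (yielding the sharper factor $t-1$ in place of the paper's $t$), but otherwise the arguments coincide.
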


\begin{proof}
We have already established 1.\ earlier (just above Claim~\ref{claim: f is an SNMF}), so consider 2. Recall that
$$f((x,\ell)(x,\ell+1))=\frac{1}{N_P(k-\ell)}\cdot g(a_{\ell},b_{\ell+1}).$$
By (\ref{equ:long}), $N_R(k+1)\cdot g(a_{\ell}b_{\ell+1})$ can be written as a sum of $t-1$ non-negative terms, each of which is at most $\Lambda$, so $g(a_{\ell}b_{\ell+1})\leq \frac{t\Lambda}{N_R(k+1)}$. But 
$$N_R(k+1)=\sum_{i=k+2-t}^{k+1}N_P(i)\geq t\cdot N_{min},$$
so
$$f((x,\ell)(x,\ell+1))=\frac{g(a_{\ell}b_{\ell})}{N_P(k-\ell)}\leq \frac{\Lambda}{(N_{\min})^2}.$$
\end{proof}

In the end, given $P$ and $t$, the SNMF $f$ we defined on $P\times [t]$ only depends on the SNMF $f_P$. Therefore, we will use the notation $f=f_P^{\times t}$ to denote this unique SNMF.

\subsection{Normalized flows in the hypergrid with small edge weights}\label{sect:normalized_flows2}

Now let us turn our attention to $P=[t]^n$. We define an SNMF of $[t]^n$ recursively using the product structure of the hypergrid and the method from the previous subsection. We let $f_1$ denote the (unique) SNMF of $[t]^1$ in which each edge receives weight $1$. Now having already defined an SNMF $f_m$ of $[t]^m$, we write $[t]^{m+1}=[t]^{m}\times [t]$, and set $f_{m+1}=f_m^{\times t}$. Write $f=f_n$.

If $xy$ is an edge of the cover graph of $[t]^{n}$ with $x\precdot y$, then there is a unique coordinate $m$ where $x$ and $y$ differ, and we can use the pair $(x;m)$ to represent the edge $xy$. Also, given a vertex $x\in [t]^n$, we write $x_{[m]}$ for the element of $[t]^{m}$ formed by the first $m$ coordinates of $x$, i.e. $x_{[m]}=(x(1),\dots,x(m))$. Furthermore, we say that a vertex $x\in [t]^m$ is \emph{normal} if 
$$\left||x|-\frac{(t-1)m}{2}\right|\leq 2tm^{3/5},$$
and say that the edge $(x;m)$ in $[t]^n$ is \emph{normal} if $x_{[m]}$ is normal in $[t]^m$.

\begin{lemma}\label{lemma:bounding_flow}
Let $(x;m)$ be a normal edge. Then $$f((x;m))=O\left(\frac{1}{m}\right).$$
\end{lemma}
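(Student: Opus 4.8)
The plan is to unwind the recursive definition $f = f_n$, where $f_{m+1} = f_m^{\times t}$, and track how the weight of the edge $(x;m)$ evolves through the product construction. Since the edge $(x;m)$ in $[t]^n$ differs from $x$ only in coordinate $m$, it is a \emph{right edge} at the stage where we pass from $[t]^{m-1}$ to $[t]^m = [t]^{m-1}\times[t]$; at all later stages $m+1, \dots, n$ it sits inside a copy of $[t]^{m}$ and is acted on as a left edge of the corresponding product. By part 1.\ of Lemma~\ref{lemma:small_weights}, each left-edge step only \emph{decreases} the weight: $f_{j+1}$ assigns to the edge a value at most what $f_j$ assigned to it. Hence it suffices to bound the weight assigned to $(x;m)$ at the moment it is created as a right edge, i.e.\ to bound $f_m((x_{[m-1]};m))$ where we view $[t]^m = [t]^{m-1}\times[t]$ and the relevant right edge is $\{(x_{[m-1]}, x(m)), (x_{[m-1]}, x(m)+1)\}$.

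Next I would apply part 2.\ of Lemma~\ref{lemma:small_weights} with $P = [t]^{m-1}$, $t$ the chain length, and $k$ the rank of $(x_{[m-1]}, x(m))$ in $[t]^m$ (so $k = |x_{[m-1]}| + x(m)$, which lies within $O(tm^{3/5})$ of $(t-1)m/2$ since $(x;m)$ is normal). This gives
$$f_m\bigl((x_{[m-1]};m)\bigr) \leq \frac{\Lambda}{N_{min}^2},$$
where $\Lambda$ is the maximum of $N_{[t]^{m-1}}(z_1)N_{[t]^{m-1}}(z_2) - N_{[t]^{m-1}}(z_3)N_{[t]^{m-1}}(z_4)$ over $z_1+z_2 = z_3+z_4$ with all $z_i \in [k-t+1, k+1]$, and $N_{min}$ is the minimum of those rank numbers. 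Now Lemma~\ref{lemma:a+b=c+d bound} applies almost verbatim to the poset $[t]^{m-1}$: one needs $k' := k - $ (shift) to satisfy $|k' - (t-1)(m-1)/2| \leq t(m-1)^{2/3}$, which follows from normality of $(x;m)$ — normality gives a deviation of $O(tm^{3/5})$, comfortably inside the $t(m-1)^{2/3}$ budget for large $m$. Lemma~\ref{lemma:a+b=c+d bound} then yields $\Lambda / N_{min}^2 = O(1/m)$, and combining with the monotonicity from the left-edge steps gives $f((x;m)) = O(1/m)$, as required.

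The main technical point to get right — and the step I expect to be the fussiest — is checking that the rank parameter relevant to the $(m-1)$-dimensional grid at the creation stage really does land in the window where Lemma~\ref{lemma:a+b=c+d bound} is valid. The definition of "normal edge" (deviation $\leq 2tm^{3/5}$ for $x_{[m]}$ in $[t]^m$) has to be shown to translate, after peeling off one coordinate and re-centering, into a deviation of at most $t(m-1)^{2/3}$ for the relevant rank in $[t]^{m-1}$; since $m^{3/5} = o((m-1)^{2/3})$ this is true for $m$ large, but one must also absorb the possible $\pm t$ slack coming from the range $[k-t+1, k+1]$ and the value of $x(m)\in[t]$ itself. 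All of this is routine once set up carefully, but it is where the exponents $3/5$ versus $2/3$ earn their keep. A secondary point is to note that the recursion indeed never turns $(x;m)$ back into a right edge at a later stage, so that part 1.\ of Lemma~\ref{lemma:small_weights} can be applied at every step from $m+1$ to $n$; this is immediate from the fact that in forming $[t]^{j}\times[t]$, right edges are exactly those that change the newly-added coordinate, which for $(x;m)$ with $m < j+1$ is never the case.
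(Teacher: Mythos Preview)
Your proposal is correct and follows essentially the same route as the paper's proof: iterate part~1.\ of Lemma~\ref{lemma:small_weights} to reduce $f((x;m))$ to $f_m((x_{[m]};m))$, then view $[t]^m=[t]^{m-1}\times[t]$ so that this becomes a right edge, apply part~2.\ of Lemma~\ref{lemma:small_weights}, and finish with Lemma~\ref{lemma:a+b=c+d bound} applied to $[t]^{m-1}$. Your explicit check that normality (deviation $\le 2tm^{3/5}$) places $k$ within the $t(m-1)^{2/3}$ window required by Lemma~\ref{lemma:a+b=c+d bound} is a detail the paper leaves implicit; one minor notational slip is that the edge in $[t]^m$ should be written $(x_{[m]};m)$ rather than $(x_{[m-1]};m)$, though your subsequent description of it as $\{(x_{[m-1]},x(m)),(x_{[m-1]},x(m)+1)\}$ is correct.
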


\begin{proof}
    By Lemma \ref{lemma:small_weights} part 1. we have $f((x;m))\leq f_m((x_{[m]};m))$. Writing $x'=x_{[m]}$, it is enough to show that $f_m((x';m))=O(1/m).$ 
    
    Let $P=[t]^{m-1}$, and write $[t]^m=P\times [t]$. Let $k=|x'|$, and let $\Lambda$ be the maximum of $N_P(z_1)N_P(z_2)-N_P(z_3)N_P(z_4)$ among all $k-t+1\leq z_1,z_2,z_3,z_4\leq k+1$ satisfying $z_1+z_2=z_3+z_4$. Also, let $N_{min}$ the minimum of $N_P(k+1-t),\dots,N(k+1)$. Since $(x';m)$ is a right edge in $P\times [t]$, we have by Lemma~\ref{lemma:small_weights} part 2.\  that $f_m((x';m))\leq \frac{\Lambda}{(N_{min})^2}$. Lemma \ref{lemma:a+b=c+d bound} then implies that $\frac{\Lambda}{(N_{min})^2} = O\left(\frac{1}{m}\right)$, so the result follows. 
\end{proof}

Unfortunately, for small $m$ Lemma~\ref{lemma:bounding_flow} only provides a weak bound on the weight $f((x;m))$ of the edge $(x;m)$. To overcome this, we define a new SNMF by exploiting the symmetries of $[t]^n$. Indeed, by taking all permutations of the coordinates, and averaging, we get an SNMF $f^{*}$ which assigns small weights to every edge. We prove this in the next lemma, for which we shall need the following simple probabilistic claim. Let $S_n$ denote the set of permutations of $\{1,\dots,n\}$.
\begin{claim}\label{claim:random_permutation}
	Let $a_1,\dots,a_n\in [0,1]$, and let $\bm{\pi}$ be a random permutation in $S_n$ chosen from the uniform distribution. Let $1\leq m\leq n$, and let $\mathbf{X}=\sum_{i=1}^{m} a_{\mathbf{\bm{\pi}}(i)}$. Then for every $z>0$,
	$$\mathbb{P}(|\mathbf{X}-\mathbb{E}(\mathbf{X})|\geq z\sqrt{m})\leq 2\cdot e^{-\Omega(z^2)}.$$
\end{claim}
\begin{proof}
By Serfling's bound~\cite[Corollary 1.1]{Serfling74} applied to the sequences $a_1, \ldots, a_n$ and $1-a_1, \ldots, 1-a_n$, we have
\begin{align*}
\mathbb{P}(|\mathbf{X}-\mathbb{E}(\mathbf{X})|\geq z\sqrt{m})\leq 2\cdot e^{-\frac{2z^2}{1-(m-1)/n}}.
\end{align*}
\end{proof}

Say that $x\in [t]^n$ is \emph{good} if
$$\left||x|-\frac{(t-1)n}{2}\right|\leq tn^{3/5}.$$

\begin{lemma}\label{lemma:small_SNMF}
There exists an SNMF $f^*$ of $[t]^n$ such that for every edge $xy$ of the cover graph, where $x\precdot y$ and $x$ is good, we have 
$$f^{*}(xy)=O\left(\frac{\log n}{n}\right).$$
\end{lemma}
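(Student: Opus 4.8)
**Proof proposal for Lemma 4.10 (the averaged SNMF $f^*$).**

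The plan is to define $f^*$ as the average of the SNMF $f=f_n$ over all coordinate permutations, and then bound the weight of each edge using the two regimes already available: Lemma~\ref{lemma:bounding_flow} for "large" coordinates and a trivial bound for "small" coordinates. Formally, for $\pi\in S_n$ let $f^\pi$ denote the SNMF of $[t]^n$ obtained by relabelling coordinates according to $\pi$ and applying the construction of Section~\ref{sect:normalized_flows2}; since relabelling coordinates is a poset automorphism of $[t]^n$ that preserves each level, $f^\pi$ is again an SNMF, and hence so is the convex combination $f^*:=\frac{1}{n!}\sum_{\pi\in S_n}f^\pi$ (the defining flow equations are linear, and the canonical weighting is permutation-invariant). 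Now fix an edge $xy$ with $x\precdot y$ differing in coordinate $j$, where $x$ is good. Under a uniform random permutation $\bm\pi$, the edge $xy$ becomes, in the relabelled grid, an edge of the form $(x';m)$ where $m=\bm\pi^{-1}(j)$ is the new position of coordinate $j$ and $x'$ records the first $m$ relabelled coordinates of $x$. So $f^*(xy)=\mathbb{E}_{\bm\pi}\big[f^{\bm\pi}(xy)\big]=\mathbb{E}_{\bm\pi}\big[f_{\mathbf m}((x'_{\mathbf m};\mathbf m))\big]$ where $\mathbf m=\bm\pi^{-1}(j)$ is uniform on $\{1,\dots,n\}$.

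Next I would split the expectation according to whether the relabelled edge is normal. Since every edge-weight of an SNMF of a level pair is at most the scaled weight of its endpoint, we always have the crude bound $f_{\mathbf m}((x'_{\mathbf m};\mathbf m))\le 1$ (indeed by Lemma~\ref{lemma:small_weights}.2 the right-edge weight is at most $\Lambda/N_{min}^2\le 1$, or one can simply use that each vertex sends total flow $1$ towards the next level). When $\mathbf m$ is small — say $\mathbf m\le (\log n)^2$ — we use this trivial bound; this contributes at most $\mathbb P(\mathbf m\le (\log n)^2)\cdot 1=O((\log n)^2/n)$. When $\mathbf m>(\log n)^2$, I claim the edge $(x'_{\mathbf m};\mathbf m)$ is normal with very high probability, so Lemma~\ref{lemma:bounding_flow} gives $f_{\mathbf m}((x'_{\mathbf m};\mathbf m))=O(1/\mathbf m)=O(1/(\log n)^2)$, and on the exceptional event we again fall back on the bound $1$. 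Summing, $f^*(xy)=O((\log n)^2/n)+O(1/(\log n)^2)\cdot\mathbb P(\text{normal})+1\cdot\mathbb P(\text{not normal})$; as long as the non-normality probability is $O(1/n)$ uniformly, this is $O((\log n)^2/n)$, which is stronger than the claimed $O((\log n)/n)$ — so there is slack, and one can relax the threshold on $\mathbf m$ to exactly recover $O((\log n)/n)$ if desired.

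The remaining point, and the main technical obstacle, is the normality claim: conditioned on $\mathbf m=m$ with $m>(\log n)^2$, the prefix $x'_{\mathbf m}$ is obtained by choosing a uniform random $m$-subset of the $n$ coordinates of $x$ (this is the correct conditional distribution because $\bm\pi$ is uniform), and I must show that with probability $1-O(1/n)$ its coordinate-sum lies within $2tm^{3/5}$ of $(t-1)m/2$. Write $|x|=\sum_i x(i)$ with each $x(i)\in[t]$, and normalise $a_i:=x(i)/(t-1)\in[0,1]$. Then $\sum_{i\in\text{subset}}x(i)=(t-1)\sum a_{\bm\pi(i)}$, and Claim~\ref{claim:random_permutation} applied with these $a_i$ bounds the deviation of $\sum_{i=1}^m a_{\bm\pi(i)}$ from its mean $\frac{m}{n}\sum_i a_i$ by $z\sqrt m$ with failure probability $2e^{-\Omega(z^2)}$. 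Choosing $z=\Theta(\sqrt{\log n})$ makes this $O(1/n)$, giving a deviation of $O(\sqrt{m\log n})$ for $\sum a_{\bm\pi(i)}$, hence $O(t\sqrt{m\log n})$ for the coordinate sum. Finally, the conditional mean is $\frac{m}{n}|x|$, and since $x$ is good we have $\big||x|-\frac{(t-1)n}{2}\big|\le tn^{3/5}$, so $\big|\frac{m}{n}|x|-\frac{(t-1)m}{2}\big|\le \frac{m}{n}\cdot tn^{3/5}\le tn^{-2/5}m\le tm^{3/5}$ (using $m\le n$). Combining, the sum is within $tm^{3/5}+O(t\sqrt{m\log n})\le 2tm^{3/5}$ once $m>(\log n)^2$ (so that $\sqrt{m\log n}=o(m^{3/5})$ fails — wait, check: $\sqrt{m\log n}\le m^{3/5}$ iff $m^{1/10}\ge\sqrt{\log n}$ iff $m\ge(\log n)^5$), so in fact I would set the threshold at $m\ge(\log n)^5$ and absorb $m<(\log n)^5$ into the trivial bound, costing $O((\log n)^5/n)$ — still far below any polynomial, and one tightens constants/thresholds at the end to land on $O((\log n)/n)$ exactly, or simply notes the stated bound is not tight. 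The one subtlety to handle carefully is that "good" is defined with exponent $3/5$ at scale $n$ while "normal" uses $3/5$ at scale $m$; the computation above shows the mean shift is comfortably controlled because $m/n\le 1$, so no difficulty arises there.
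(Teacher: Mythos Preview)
Your overall strategy---define $f^*$ as the average of $f$ over coordinate permutations, then bound each edge-weight via Lemma~\ref{lemma:bounding_flow} on the normal event and the trivial bound $1$ otherwise---is exactly the paper's approach. However, there is a genuine arithmetic error in your accounting, and your threshold argument does not recover the stated $O((\log n)/n)$.

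The error is in the line
\[
f^*(xy)=O((\log n)^2/n)+O(1/(\log n)^2)\cdot\mathbb P(\text{normal})+1\cdot\mathbb P(\text{not normal}).
\]
The middle term is $O(1/(\log n)^2)\cdot(1-o(1))=\Theta(1/(\log n)^2)$, which is \emph{much larger} than $(\log n)^2/n$, so the sum is $\Theta(1/(\log n)^2)$, not $O((\log n)^2/n)$. (Also, $O((\log n)^2/n)$ would be \emph{weaker} than $O((\log n)/n)$, not stronger.) The mistake is that you replaced the pointwise bound $O(1/\mathbf m)$ by its worst case $O(1/(\log n)^2)$ before averaging; you must instead keep $O(1/\mathbf m)$ inside the expectation. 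Even with this repair, your choice $z=\Theta(\sqrt{\log n})$ forces the threshold $m\ge(\log n)^5$, and the small-$m$ contribution then costs $O((\log n)^5/n)$, which still falls short of the lemma as stated.

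The clean fix---and what the paper does---is to abandon the threshold and instead let $z$ depend on $m$: take $z=m^{1/10}$. Then $z\sqrt m=m^{3/5}$ exactly matches the normality window, and Claim~\ref{claim:random_permutation} gives $\mathbb P(\text{not normal}\mid\mathbf m=m)\le 2e^{-\Omega(m^{1/5})}=O(1/m)$ for \emph{every} $m\ge 1$. Combined with the $O(1/m)$ bound from Lemma~\ref{lemma:bounding_flow} on the normal event, one gets $\mathbb E[f_{\bm\pi}(xy)\mid\mathbf m=m]=O(1/m)$ uniformly in $m$, and hence
\[
f^*(xy)=\frac{1}{n}\sum_{m=1}^n O\!\left(\frac{1}{m}\right)=O\!\left(\frac{\log n}{n}\right),
\]
which is precisely the harmonic sum that produces the logarithm.
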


\begin{proof}
 For $\pi\in S_n$ and $x\in [t]^n$, let $\pi(x)\in [t]^n$ be defined by $\pi(x)(i)=x(\pi(i))$ for $i\in \{1,\dots,n\}$. Also, define $f_{\pi}$ by setting $f_{\pi}(xy)=f(\pi(x)\pi(y))$  for every edge $xy$. Clearly, $f_{\pi}$ is also an SNMF. As the convex combination of SNMF's is also an SNMF, the function $f^{*}$ defined by 
$$f^{*}:=\frac{1}{n!}\sum_{\pi\in S_n} f_{\pi}=\mathbb{E}(f_{\bm{\pi}})$$
is also an SNMF, where the expectation is taken over a random permutation $\bm{\pi}$ chosen from the uniform distribution on $S_n$. Let us show that $f^{*}$ has the required properties.

Let $xy$ be an edge of $[t]^n$ with $x\precdot y$  and $x$ good, and let $m$ be the index of the coordinate that $x$ and $y$ differ in. Let $\bm{\pi}$ be a random permutation, and let $\textbf{m}=\bm{\pi}(m)$ and $\mathbf{x}=\bm{\pi}(x)_{[\textbf{m}]}$. Then by Lemma~\ref{lemma:bounding_flow} and the fact that $||f||_{\infty}\leq 1$, we have 
\begin{equation}\label{equ:f_pi}
f_{\bm{\pi}}(xy)=f((\bm{\pi}(x);\textbf{m}))\leq \begin{cases}O(1/\textbf{m}) &\mbox{if }\mathbf{x}\mbox{ is normal in }[t]^m,
\\1 &\mbox{otherwise.}\end{cases}
\end{equation}
For every $m_0\in \{1,\dots,n\}$, the goodness of $x$ implies 
$$\mu:=\mathbb{E}(|\mathbf{x}|\ \big|\ m_0=\mathbf{m})=\frac{m_0}{n}|x|\in \left[\frac{(t-1)m_0}{2}-\frac{m_0t}{n^{2/5}},\frac{(t-1)m_0}{2}+\frac{m_0t}{n^{2/5}}\right].$$
Hence,
$$\mathbb{P}(\mathbf{x}\mbox{ is not normal}\ \big|\ m_0=\mathbf{m})\leq \mathbb{P}\left(||\mathbf{x}|-\mu|\geq m_0^{3/5}t\ \big|\ m_0=\mathbf{m}\right).$$
By Claim \ref{claim:random_permutation}, the right hand side is at most $2\cdot e^{-\Omega(m_0^{1/5})}=O(1/m_0)$. Combining this with (\ref{equ:f_pi}), we get
$$\mathbb{E}(f_{\bm{\pi}}(xy)\ |\ m_0=\mathbf{m})\leq \mathbb{P}(\mathbf{x}\mbox{ is not normal}\ \big|\ m_0=\mathbf{m})+O\left(\frac{1}{m_0}\right)=O\left(\frac{1}{m_0}\right).$$
From this we can conclude that
$$f^{*}(xy)=\sum_{m_0=1}^{n}\mathbb{P}(\mathbf{m}=m_0)\cdot \mathbb{E}(f_{\bm{\pi}}(xy)\ |\ m_0=\mathbf{m})=\frac{1}{n}\cdot O\left(\sum_{m_0=1}^{n}\frac{1}{m_0}\right)=O\left(\frac{\log n}{n}\right).$$
\end{proof}
\section{Regular chain covers and supersaturation}
\subsection{Regular chain covers}\label{sect:chain_covers}

Recall that a regular chain cover of a graded poset $P$ is a probability distribution $\phi$ on the set of maximal chains $\mathcal{C}$ such that for every $x\in L(i)$, we have  
$$\phi(C\in \mathcal{C}:x\in C)=\frac{1}{N(i)}.$$
Given an SNMF $f$ on $P$, one can define a regular chain cover as follows. We may assume that $P$ has a unique minimum, denoted by $0$, and unique maximum, denoted by $1$. Indeed, if $P$ has no unique minimum, add a new element $0$ which is covered by every element of $L(0)$, and set $f(0x)=1/N(0)$ for every $x\in L(0)$. Proceed similarly if $P$ has no unique maximum, adding a new element $1$ covering every element of the maximum level, and setting $f(x1)=1$ for every $x$ in the maximum level. We say that a chain $C$ of $P$ is \emph{skipless} if  $\{i: L(i)\cap C\neq \emptyset\}$ is an interval. Given a skipless chain $C=\{x_0,\dots,x_{k}\}$, we define
$$\phi(C)=\prod_{i=0}^{k-1}f(x_ix_{i+1}),$$
noting that if $C$ contains only one element, then $\phi(C)=1$. Given $x,y\in P$ such that $x\prec y$, write $[x,y]$ for the set of all skipless chains with minimal element $x$ and maximal element $y$, and $\phi([x,y])$ for the sum of $\phi(C)$ over all chains $C\in[x,y]$.

\begin{lemma}\label{lemma:chain_weights}
   Let $x\in L(i)$. Then
    \begin{itemize}
        \item[1.] $\phi([0,x])=\frac{1}{N(i)}$, and
        \item[2.] $\phi([x,1])=1$.
    \end{itemize}
\end{lemma}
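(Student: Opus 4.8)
The two claims are both of the form "a certain sum of products of SNMF-weights along skipless chains equals a clean quantity", so the natural approach is induction on the length of the chains, using the defining flow-conservation identities of the SNMF. Recall that $f$ restricted to the bipartite graph between consecutive levels $L(i)\cup L(i+1)$ is a flow for the scaled canonical weighting, so for each $x\in L(i)$ we have $\sum_{y:\, x\precdot y} f(xy)=1$ (outgoing weight from the "$A$-side") and for each $y\in L(i+1)$ we have $\sum_{x:\, x\precdot y} f(xy)=N(i)/N(i+1)$ (incoming weight on the "$B$-side"). These are exactly the two facts that will drive the two parts.

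For part 2., I would show $\phi([x,1])=1$ by downward induction on the rank $i$ of $x$ (i.e. from the top level down to $L(0)$ and then the artificial minimum). The base case is $x=1$, where the only skipless chain is the singleton and $\phi=1$ by convention. For the inductive step, every skipless chain from $x$ to $1$ has a unique second element $y$ with $x\precdot y$, and the chains with that fixed $y$ contribute $f(xy)\cdot\phi([y,1])$; summing over $y$ and using the inductive hypothesis $\phi([y,1])=1$ gives $\sum_{y:\,x\precdot y} f(xy)=1$, which is precisely the outgoing-weight identity. Hence $\phi([x,1])=1$.

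For part 1., I would show $\phi([0,x])=1/N(i)$ by upward induction on $i$. The base case is $x=0$ (the unique minimum, added if necessary), where the singleton chain gives $\phi=1=1/N(-1)$ with the convention $N(\text{min})=1$; alternatively one can take as base case $x\in L(0)$, where the only skipless chain is $\{0,x\}$ and $\phi([0,x])=f(0x)=1/N(0)$ by our definition of $f$ on the artificial edges. For the inductive step with $x\in L(i+1)$: every skipless chain from $0$ to $x$ has a unique penultimate element $w$ with $w\precdot x$, and the chains ending $\dots,w,x$ contribute $\phi([0,w])\cdot f(wx)$; summing over all such $w$ and applying the inductive hypothesis $\phi([0,w])=1/N(i)$ gives $\tfrac{1}{N(i)}\sum_{w:\,w\precdot x} f(wx)$. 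By the incoming-weight identity this inner sum is $N(i)/N(i+1)$, so $\phi([0,x])=\tfrac{1}{N(i)}\cdot\tfrac{N(i)}{N(i+1)}=\tfrac{1}{N(i+1)}$, completing the induction.

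The only genuinely delicate point — and the one I would write out carefully — is the bookkeeping that every skipless chain from $0$ to $x$ (resp. from $x$ to $1$) decomposes uniquely as (shorter skipless chain) concatenated with one cover edge: this uses that $P$ is graded so that a skipless chain hits exactly the levels in an interval, one element per level, and that $f$ is supported on cover-graph edges only. Once that decomposition is granted, both parts are immediate consequences of the two flow-conservation identities, and there are no estimates to make. I would also note for later use that combining parts 1. and 2. shows $\phi$ extends to a probability distribution on maximal chains with $\mathbb{P}(x\in\mathbf{C}) = \phi([0,x])\cdot\phi([x,1]) = 1/N(i)$ for $x\in L(i)$, i.e.\ a regular chain cover, though that corollary presumably comes just after this lemma.
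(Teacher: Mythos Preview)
Your proposal is correct and matches the paper's proof essentially line for line: both parts are proved by induction on the rank (upward for part 1., downward for part 2.), decomposing skipless chains at the first/last cover edge and invoking the two SNMF flow-conservation identities $\sum_{y:\,x\precdot y}f(xy)=1$ and $\sum_{w:\,w\precdot x}f(wx)=N(i)/N(i+1)$. Your additional remark about the unique decomposition of skipless chains and the corollary on regular chain covers is exactly what the paper does immediately after the lemma.
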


\begin{proof}
    1. We prove this by induction on $i$. If $i=0$, the statement is trivial, so assume that $i\geq 1$. Letting $x_1,\dots,x_r$ denote the elements of $L(i-1)$ such that $x_j\precdot x$, we have
    $$\phi([0,x])=\sum_{j=1}^{r}\phi([0,x_j])f(x_jx)=\frac{1}{N(i-1)}\sum_{j=1}^{r}f(x_jx)=\frac{1}{N(i)}.$$
    Here, the last equality holds as $f$ is an SNMF.

    2. Let $L(n-1)$ be the highest level of $P$. We prove the statement by induction on $i$. If $i=n-1$, the statement is trivial, so assume that $i<n-1$. Letting $x_1,\dots,x_r$ denote the elements of $L(i+1)$ such that $x\precdot x_j$, we have $$\phi([x,1])=\sum_{j=1}^{r}\phi([x_j,1])f(x_jx)=\sum_{j=1}^{r}f(x_jx)=1.$$
    Here again, the last equality holds as $f$ is an SNMF.
\end{proof}

By the previous lemma, we have $\phi(\mathcal{C})=\phi([0,1])=1$, so $\phi$ gives rise to a probability distribution on $\mathcal{C}$. Also, for every $x\in L(i)$, we have
$$\phi(C\in\mathcal{C}:x\in C)=\phi([0,x])\cdot \phi([x,1])=\frac{1}{N(i)},$$
so $\phi$ is a regular cover. We refer to $\phi$ as \emph{the regular cover induced by $f$}.

\subsection{Supersaturation}\label{sect:supersaturation}

Now let us turn our attention to $P=[t]^{n}$. Given a subset $A\subseteq [t]^{n}$, let $\Delta(A)$ denote the maximum degree of the comparability subgraph of $[t]^{n}$ induced by $A$.  First, we prove that if $A$ is much larger than $\alpha(t,n)$, then $\Delta(A)$ is large. To this end, we begin with the case $n=2$.

\begin{lemma}\label{lemma:rectangle_saturation}
    Let $A\subseteq [t]^2$  be such that $|A|> t$. Then $\Delta(A)= \Omega((|A|/t)^2)$.
\end{lemma}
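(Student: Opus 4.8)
The plan is to prove the statement by a direct counting argument in the two-dimensional grid $[t]^2$, exploiting the product structure. Write $a = |A|$ and suppose for the moment that $a > t$. Since $A$ lives in $[t]^2$, by pigeonhole there is some column (a set of the form $\{x : x(1) = i\}$) that contains at least $a/t$ elements of $A$; call these $y_1, \ldots, y_r$ with $r \geq a/t$, ordered so that $y_1 \prec y_2 \prec \cdots \prec y_r$ (they form a chain since they share a first coordinate). Similarly there is a \emph{row} containing at least $a/t$ elements of $A$. The idea is that such a long chain inside $A$, combined with the mass of $A$ spread across the grid, forces some element to be comparable to many others.

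First I would make the following observation: if $C = \{y_1 \prec \cdots \prec y_r\} \subseteq A$ is a chain contained in a single column, then for any $z \in A$, the element $z$ is comparable to $y_j$ whenever $z(1) \leq y_j(1)$ and $z(2) \leq y_j(2)$, or $z(1) \geq y_j(1)$ and $z(2) \geq y_j(2)$; since all the $y_j$ share the same first coordinate $i$, for a \emph{fixed} $z$ the set of $j$ for which $y_j$ is comparable to $z$ is an up-set or down-set of the chain, hence an interval containing one of the two ends. So each $z \in A$ is comparable to at least half of $\{y_1, \ldots, y_r\}$ unless $z(2)$ happens to fall strictly between consecutive $y_j$-second-coordinates and $z(1) \neq i$ --- but in fact, counting more carefully: for each $z$, $z$ is comparable to $y_j$ for all $j$ with $y_j$ below-or-equal $z$ in the second coordinate if $z(1) \geq i$, and for all $j$ with $y_j$ above-or-equal $z$ if $z(1) \leq i$. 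Summing over all $z \in A$ the number of comparable pairs $(z, y_j)$, and using that the second coordinates of $A$ are also spread out, one gets that $\sum_{j=1}^r |\{z \in A : z \sim y_j\}| = \Omega(a^2/t)$, because the total count of incidences between $A$ and the chain $C$ is at least a constant fraction of $r \cdot a \cdot (\text{something})$... Let me instead use both a heavy column \emph{and} a heavy row: pick a heavy column giving a chain $C$ of length $\geq a/t$ and note $|C| \cdot a$ pairs $(y_j, z)$; by the interval observation each $z$ is comparable to $\geq |C|/2$ of the $y_j$ whenever $z$'s second coordinate is $\leq$ the median of $C$ or $\geq$ it, which is always one of the two cases. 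Hence there are $\geq |C| \cdot a / 2 = \Omega(a^2/t)$ comparable pairs with one endpoint in $C$, so some $y_j$ has $\Omega(a/t)$ comparable neighbours in $A$ --- wait, that only gives $\Omega(a/t)$, not $\Omega((a/t)^2)$.

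To get the square, I would combine the heavy column with a heavy row. Take the column $i^*$ containing $r \geq a/t$ elements of $A$, forming a chain $y_1 \prec \cdots \prec y_r$, and the row $j^*$ containing $s \geq a/t$ elements, forming a chain $w_1 \prec \cdots \prec w_s$. The point $(i^*, j^*)$ may or may not be in $A$, but every $y_k$ with second coordinate $\geq j^*$ is $\succeq$ every $w_\ell$ with first coordinate $\leq i^*$; and every $y_k$ with second coordinate $\leq j^*$ is $\preceq$ every $w_\ell$ with first coordinate $\geq i^*$. In the first case, the number of such pairs is (number of $y_k$ above the row) times (number of $w_\ell$ left of the column); in the second, (number below) times (number right). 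One of these products is at least $(r/2)(s/2) = \Omega((a/t)^2)$, giving that many comparable pairs between $C$ and the row-chain. Since the row-chain has only $s \leq t$ elements... no: these $\Omega((a/t)^2)$ comparable pairs are all incident to the at most $r \leq t$ elements $y_k$ \emph{and} at most $s \leq t$ elements $w_\ell$, so by averaging some single vertex (a $y_k$ or a $w_\ell$) is comparable to $\Omega((a/t)^2 / t) = \Omega(a^2/t^3)$ others --- still not enough directly. The resolution is cleaner: among the $\Omega((a/t)^2)$ comparable pairs, each $y_k$ participates in at most $s$ of them, so there are $\geq \Omega((a/t)^2 / s) \geq \Omega((a/t)^2 / t) \cdot$ ... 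I think the correct final move is to fix the single $y_k$ lying at the \emph{median} height of $C$: it is comparable (say, $\succeq$) to all $w_\ell$ with $w_\ell(1) \leq i^*$ and $w_\ell(2) \leq y_k(2)$, and $y_k$ is also comparable to at least $r/2$ of the $y$'s, and all $w_\ell$ to its lower-left; choosing the heavy row to be one passing through $y_k$'s neighbourhood forces $\Omega(a/t)$ in each direction and the product $\Omega((a/t)^2)$ neighbours of that one vertex. The main obstacle is exactly this bookkeeping: identifying a single vertex that sees $\Omega((a/t)^2)$ comparable elements rather than merely spreading $\Omega((a/t)^2)$ comparabilities over $\Theta(t)$ vertices, and I would resolve it by choosing the vertex to be the intersection point $(i^*, j^*)$ of a heavy column and a heavy row if it lies in $A$ (whence its up-set and down-set within $A$ each have size $\Omega(a/t)$, one of them giving $\Omega((a/t)^2)$ after a second pigeonhole inside the strip), and handling the case $(i^*,j^*) \notin A$ by a perturbation argument picking a nearby element of $A$.
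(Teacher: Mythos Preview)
Your proposal never arrives at a complete argument, and the two concrete attempts it makes both have genuine gaps. The heavy-column/heavy-row idea breaks at the sentence ``One of these products is at least $(r/2)(s/2)$'': writing $r_1,r_2$ for the number of column elements with second coordinate $\le j^*$, $\ge j^*$ respectively, and $s_1,s_2$ analogously for the row, the two products you form are $r_2 s_1$ and $r_1 s_2$. Nothing prevents $r_2$ and $s_2$ from both being essentially $0$ (take the heavy column to be the rightmost one and the heavy row to be the top one, so that every column element lies weakly below the row and every row element lies weakly left of the column); then both products are $O(r+s)$, not $\Omega(rs)$. Your fallback to the intersection point $(i^*,j^*)$ does not help either: even if $(i^*,j^*)\in A$, the row and column together only contribute $r+s-1=O(|A|/t)$ comparable elements to it, and the vague ``second pigeonhole inside the strip'' does not produce the missing factor of $|A|/t$. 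In short, heavy rows and columns detect many comparable \emph{pairs}, but they need not concentrate on a single vertex.

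The paper's proof uses a genuinely different idea that you are missing: instead of rows and columns, partition $[t]^2$ into $k\times k$ blocks with $k=\Theta(|A|/t)$ and group the blocks into the $2\lceil t/k\rceil-1$ diagonal classes (two blocks are equivalent if their index difference is a multiple of $(1,1)$). Any two distinct blocks in the same diagonal class are \emph{entirely} comparable, so once pigeonhole puts $\Omega(|A|^2/t^2)>k^2$ elements of $A$ into a single diagonal class, any element $x$ in that class is comparable to all the elements of the class lying in blocks other than its own --- and there are $\Omega(|A|^2/t^2)$ of those. This diagonal-block structure is exactly what converts ``many comparabilities'' into ``one vertex of high degree'', and is the idea your approach lacks.
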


\begin{proof}
    If $|A|>t=\alpha(t,2)$, then $\Delta(A)\geq 1$, so the statement is true for $|A|\leq 16t$ by choosing the constant hidden by the $\Omega(.)$ notation sufficiently small. In what follows, assume that $|A|\geq 16t$. Let $k=\lfloor |A|/(16t)\rfloor\geq |A|/(32t)$ and $s=\lceil t/k\rceil\leq 2t/k\leq 64t^2/|A|$. For $(i,j)\in [s]\times [s]$, let $$T_{i,j}=\{(a,b)\in [t]^2: ki\leq a<(k+1)i, kj\leq b <(k+1)j\}.$$
    Then the sets $T_{i,j}$ form a partition of $[t]\times [t]$, and $|T_{i,j}|\leq k^2$. For $u,v\in [s]\times [s]$, write $u\sim v$ if $v-u$ is an integer multiple of $(1,1)$. Then $\sim$ is an equivalence relation with $2s-1$ equivalence classes. See Figure \ref{fig:grid} for an illustration. By the pigeonhole principle, there exists a subset $B\subset A$ of size at least $$\frac{|A|}{2s}\geq \frac{|A|^2}{128t^2}>k^2$$ and an equivalence class $E$ such that each element of $B$ belongs to some $T_u$, $u\in E$.  Note that as $|B|>k^2$, not every element of $B$ is contained in the same $T_u$. This means that there is some $x\in B$ with $x\in T_v$ and $|B\setminus T_v|\geq k^2/2$. Since every element of $B\setminus T_v$ is comparable to $x$, this implies $x$ has degree at least $k^2/2=\Omega((|A|/t)^2)$. 
\end{proof}

\begin{figure}
    \centering
    \begin{tikzpicture}
        \fill[black!10!white] (-0.25,-0.25) rectangle (1.25,1.25);
        \fill[black!10!white] (1.25,1.25) rectangle (2.75,2.75);
        \fill[black!10!white] (2.75,2.75) rectangle (4.25,4.25);
        
        \def\cellsize{0.5cm}
        \draw[step=\cellsize, dotted] (0, 0) grid (8*\cellsize, 8*\cellsize);
        \foreach \x in {0,0.5,...,4} {
           \foreach \y in {0,0.5,...,4} {
             \fill (\x, \y) circle (1pt);
           }
        }
        \node at (-0.3,-0.3) {$(0,0)$};
        \node at (4.3,4.3) {$(t-1,t-1)$};

        \draw[color=red] (-0.25,1.25) -- (4.25,1.25);
        \draw[color=red] (-0.25,2.75) -- (4.25,2.75);
        \draw[color=red] (1.25,-0.25) -- (1.25,4.25);
        \draw[color=red] (2.75,-0.25) -- (2.75,4.25);

    \end{tikzpicture}
    \caption{An illustration for the proof of Lemma \ref{lemma:rectangle_saturation}. The sets $T_{i,j}$ are separated by the red lines, and the gray cells denote an equivalence class of $\sim$.}
    \label{fig:grid}
\end{figure}
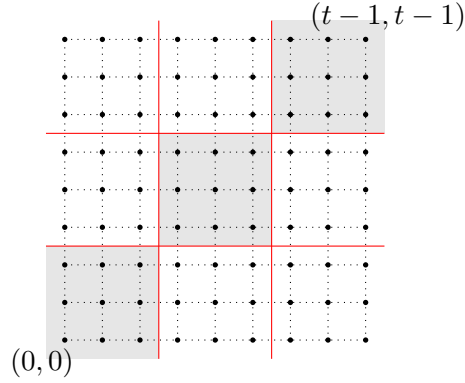
\noindent We now turn to the case of general $n$.
\begin{lemma}\label{lemma:weak_saturation}
For every $t,n\geq 2$, if $A\subseteq [t]^n$ is such that $|A|>\alpha(t,n)$, then 
$$\Delta(A)\geq \Omega\left(\left(\frac{|A|}{\alpha(t,n)}\right)^2\right).$$
\end{lemma}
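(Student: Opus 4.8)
The plan is to reduce the general case to the two-dimensional case via a good rectangular partition, exactly as in the proofs of Theorems~\ref{thm:weak} and~\ref{thm:small_antichains}, but now tracking degrees rather than counts. Let $R_1,\dots,R_s$ be a good rectangular partition of $[t]^n$ (Lemma~\ref{lemma:rectangle_partition}), so $s=\Theta(t^{n-2}/n)$ and each $R_j\cong [c_j]\times[c_j']$ with $c_j,c_j'=\Theta(t\sqrt n)$. Set $u=\Theta(t\sqrt n)$ to be a common upper bound on all the $c_j,c_j'$. Given $A\subseteq[t]^n$ with $|A|>\alpha(t,n)=\Theta(t^{n-1}/\sqrt n)=\Theta(su)$, write $A_j=A\cap R_j$ and $a_j=|A_j|$, so $\sum_j a_j=|A|$.

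The key point is that within each rectangle $R_j$ the comparability relation of $[t]^n$ restricts to the comparability relation of $[c_j]\times[c_j']$, so Lemma~\ref{lemma:rectangle_saturation} applies \emph{inside each block}: if $a_j>u\ge c_j,c_j'$ then $A_j$ contains a point of comparability-degree $\Omega((a_j/u)^2)$ within $R_j$, hence of degree $\Omega((a_j/u)^2)$ in the whole comparability graph on $A$. So $\Delta(A)\ge \Omega\bigl(\max_j (a_j/u)^2\bigr)$, provided some block has $a_j>u$. Since $|A|>\alpha(t,n)\ge K s u$ for a suitable constant $K$ (using $\alpha(t,n)=\Theta(su)$, and enlarging the constant $c$ in Theorem~\ref{thm:main} to absorb the bounded range of $n$ where this fails), an averaging argument gives a block with $a_j\ge |A|/s$; if $|A|/s\ge |A|/\alpha(t,n)\cdot u/K'$ is large enough this is automatically $>u$ once $|A|/\alpha(t,n)$ exceeds an absolute constant, and for $|A|/\alpha(t,n)$ within a bounded factor of $1$ the bound $\Delta(A)=\Omega(1)$ suffices (as $|A|>\alpha(t,n)$ forces at least one comparable pair). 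Concretely, by pigeonhole there is a $j$ with $a_j\ge |A|/s$, and then
\[
\Delta(A)\ \ge\ \Omega\!\left(\Bigl(\frac{a_j}{u}\Bigr)^{2}\right)\ \ge\ \Omega\!\left(\Bigl(\frac{|A|}{su}\Bigr)^{2}\right)\ =\ \Omega\!\left(\Bigl(\frac{|A|}{\alpha(t,n)}\Bigr)^{2}\right),
\]
using $su=\Theta(\alpha(t,n))$ in the last step; the case distinction above handles the possibility that the relevant $a_j$ fails to exceed $u$, where we fall back on $\Delta(A)\ge1$ together with $|A|/\alpha(t,n)=O(1)$.

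The main obstacle is the bookkeeping at the boundary between the two regimes: one must make sure that when the ``heavy'' block satisfies $a_j>u$ (so Lemma~\ref{lemma:rectangle_saturation} is genuinely applicable with a quadratic gain), the resulting bound $\Omega((|A|/(su))^2)$ really is $\Omega((|A|/\alpha(t,n))^2)$, which requires $su=\Theta(\alpha(t,n))$ — this is exactly the estimate $\alpha(t,n)=\Theta(t^{n-1}/\sqrt n)$ combined with Lemma~\ref{lemma:rectangle_partition}, already used in Section~\ref{sect:small_antichains}. And when $a_j\le u$ for the heaviest block, i.e.\ $|A|\le su=O(\alpha(t,n))$, one only needs that $|A|>\alpha(t,n)$ forces a comparable pair, hence $\Delta(A)\ge1=\Omega((|A|/\alpha(t,n))^2)$ since $|A|/\alpha(t,n)=O(1)$. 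No supersaturation machinery or flows are needed for this lemma; it is a purely combinatorial consequence of the rectangular partition plus the $2$-dimensional case.
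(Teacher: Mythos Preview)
Your approach is correct and essentially identical to the paper's: split into the regime $|A|\le C\alpha(t,n)$ (where $\Delta(A)\ge 1$ suffices) and the regime $|A|>C\alpha(t,n)$ (where pigeonhole into a good rectangular partition produces a block with $a_j>u$, and Lemma~\ref{lemma:rectangle_saturation} applied to that block gives the quadratic bound via $su=\Theta(\alpha(t,n))$). One cosmetic slip: the remark about ``enlarging the constant $c$ in Theorem~\ref{thm:main}'' is misplaced---what you actually adjust is the implicit constant in the $\Omega(\cdot)$ of the present lemma, not the constant in the main theorem.
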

\begin{proof}
Let $C>0$ be a constant specified later. As $\Delta(A)\geq 1$ by the condition $|A|>\alpha(t,n)$, the statement trivially holds for $|A|\leq C\alpha(t,n)$, assuming the constant hidden by the $\Omega(.)$ notation is sufficiently small. So we may assume that $|A|\geq C\alpha(t,n)$. By Lemma~\ref{lemma:rectangle_partition}, there exists an absolute constant $C_1>0$ and a partition of $[t]^n$ into rectangles $R_1,\dots,R_s$ with $R_i$ isomorphic to $[c_i]\times [c_i']$ for some $c_i,c_i'\leq C_1t\sqrt{n}$ and $s\leq C_1 t^{n-2}/n$. Write $u=C_1t\sqrt{n}$. By the pigeonhole principle, there exists some $i$ such that $|A\cap R_i|\geq |A|/s\geq |A|n/(C_1t^{n-2})$. If $|A\cap R_i|>u$, we can apply Lemma~\ref{lemma:rectangle_saturation} to conclude that 
 $$\Delta(A)\geq \Delta(A\cap R_i)=\Omega\left(\left(\frac{|A\cap R_i|}{u}\right)^2\right)=\Omega\left(\left(\frac{|A|}{t^{n-1}}\sqrt{n}\right)^2\right)=\Omega\left(\left(\frac{|A|}{\alpha(t,n)}\right)^2\right),$$
 where we used Lemma \ref{lemma:middle_level} in the last equality. It remains to show that $|A\cap R_i|>u$ holds. Note that if $|A|\geq C\alpha(t,n)$, then, again by Lemma~\ref{lemma:middle_level},
 $$|A\cap R_i|\geq \frac{|A|n}{C_1t^{n-2}}\geq \frac{C\alpha(t,n)n}{C_1t^{n-2}}\geq\frac{Ct\sqrt{n}}{C_1}.$$
 Hence, if $C>C_1^2$, we indeed have $|A\cap R_i|>u$.
\end{proof}

Next, we present stronger bounds for $\Delta(A)$ for those sets $A\subseteq [t]^n$ whose size is close to $\alpha(t,n)$. Let $f^{*}$ be the SNMF of $[t]^{n}$ we constructed in Lemma~\ref{lemma:small_SNMF}, and let \begin{align}\label{eq:def_of_T}
T:=\bigcup_{|i-(t-1)n/2|\leq tn^{3/5}} L(i)\end{align} be the collection of all good vertices of $[t]^n$. Then by Lemma~\ref{lemma:small_weights}, the maximum $W$ of $f^{*}$ over all edges $xy$ with $x\precdot y$ and $x\in T$ satisfies $W=O(\frac{\log n}{n})$. Let $\phi$ be the regular cover induced by $f^{*}$, and let $\mathbf{C}$ be a random element of $\mathcal{C}$ chosen according to the probability distribution given by $\phi$. 
\begin{lemma}\label{lemma:xy_in_C}
 Let $x\in L(i)$ for some $i$: $\vert i-(t-1)n/2\vert \leq tn^{3/5}$, and let $y\in T$ such that $x\prec y$. If $k$ is a positive integer such that $|y-x|\geq k$, then
$$\mathbb{P}(x,y\in \mathbf{C})\leq \frac{k!\cdot W^{k}}{N(i)}.$$
\end{lemma}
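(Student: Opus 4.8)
The plan is to reduce the claim to an upper bound on $\phi([x,y])$ and then prove that bound by splitting each skipless chain from $x$ to $y$ at the level $k$ steps below $y$, using the smallness of $f^*$ on the top portion and flow conservation on the bottom portion.

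First I would note that every maximal chain of $[t]^n$ is skipless and contains the minimum $\mathbf 0=(0,\dots,0)$ and the maximum $\mathbf 1=(t-1,\dots,t-1)$; hence a maximal chain containing both $x$ and $y$ decomposes uniquely into skipless chains lying in $[\mathbf 0,x]$, $[x,y]$ and $[y,\mathbf 1]$, and its $\phi$-weight is the product of the weights of the three parts. Summing over all such chains and applying Lemma~\ref{lemma:chain_weights} gives
$$\mathbb P(x,y\in \mathbf C)=\phi([\mathbf 0,x])\cdot\phi([x,y])\cdot\phi([y,\mathbf 1])=\frac{1}{N(i)}\,\phi([x,y]),$$
so it suffices to prove $\phi([x,y])\le k!\,W^k$. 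Since $|y-x|\ge k$ we have $i\le |y|-k<|y|$, so I may split each skipless chain from $x$ to $y$ at its unique vertex $w$ on level $|y|-k$, obtaining $\phi([x,y])=\sum_{w}\phi([x,w])\,\phi([w,y])$, where $w$ ranges over the elements of $L(|y|-k)$ with $x\preceq w\preceq y$.

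Next I would bound each factor $\phi([w,y])$. Every maximal chain of the interval $[w,y]$ has exactly $k$ edges, whose lower endpoints lie on levels $|y|-k,\dots,|y|-1$; since $x$ and $y$ are good, every level strictly between $|x|$ and $|y|$ is within $tn^{3/5}$ of $(t-1)n/2$, so these endpoints are good vertices and, by the choice of $W$, each such edge has $f^*$-weight at most $W$. Hence every maximal chain of $[w,y]$ has $\phi$-weight at most $W^k$; and since $[w,y]$ is a product of chains with $\sum_c (y(c)-w(c))=k$, it has at most $\binom{k}{\,y(1)-w(1),\dots,y(n)-w(n)\,}\le k!$ maximal chains. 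Therefore $\phi([w,y])\le k!\,W^k$ for every $w$ in the sum, and it remains to check $\sum_{w}\phi([x,w])\le 1$. This follows by splitting the maximal chains counted by $\phi([x,\mathbf 1])$ at level $|y|-k$: using $\phi([x,\mathbf 1])=1$ and $\phi([w,\mathbf 1])=1$ from Lemma~\ref{lemma:chain_weights}, we get $\sum_{w\succeq x,\;|w|=|y|-k}\phi([x,w])=1$, and discarding the $w$ that fail $w\preceq y$ only decreases the sum. Combining the three bounds yields $\phi([x,y])\le k!\,W^k$, as required.

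The step needing the most care — more a design choice than a genuine obstacle — is that one must peel the \emph{top} $k$ edges off the chains, not the bottom $k$: peeling from the top is what lets the final estimate invoke $\phi([w,\mathbf 1])=1$, an identity valid on every level, whereas peeling from the bottom would replace the harmless factor $1$ by the level ratio $N(|x|+k)/N(|y|)$, which need not be at most $1$ once $|y|$ is past the middle level. The other point to flag is that one should not try to bound $f^*$ on every edge of every chain in $[x,y]$: that introduces a factor equal to the number of maximal chains of the whole interval $[x,y]$, which is hopelessly lossy when $t$ is large; bounding only $k$ edges per chain and controlling the remaining prefix mass through the normalized-matching identity $\sum_w\phi([x,w])=1$ is exactly what makes the argument work uniformly in $t$.
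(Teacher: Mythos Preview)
Your argument is correct and is essentially the paper's proof in unrolled form: the paper also reduces to $\phi([x,y])\le k!\,W^k$ and proves this by induction on $|y-x|$, peeling a single bottom edge at each step via $\sum_j f^*(xx_j)\le 1$ and handling the base case $|y-x|=k$ by the same $k!$-chains-times-$W^k$ estimate. Your direct split at level $|y|-k$ is exactly what the induction produces once fully unwound, so the two arguments coincide.
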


\begin{proof}
We have 
$$\mathbb{P}(x,y\in \mathbf{C})=\phi([0,x])\cdot \phi([x,y])\cdot \phi([y,1])=\frac{\phi([x,y])}{N(i)}$$
by Lemma \ref{lemma:chain_weights}. Therefore, it is enough to prove that $\phi([x,y])\leq k!\cdot W^k$. 

We prove this by induction on $|y-x|$. First, consider the base case $|y-x|=k$. The number of skipless chains with endpoints $x$ and $y$ is at most $k!$, and for each such chain $D$, we have $\phi(D)\leq W^{k}$. Therefore, the statement follows. Now assume that $|y-x|>k$. Let $x_1,\dots,x_r\in L(i+1)$ be all the elements such that $x\precdot x_j\prec y$. Then
$$\phi([x,y])=\sum_{j=1}^{r}f^*(xx_j)\cdot \phi([x_j,y])\leq k!\cdot W^k\cdot \sum_{j=1}^rf^{*}(xx_j)\leq k!\cdot W^k.$$
Here, the first inequality holds by our induction hypothesis, while the last inequality holds as $f^{*}$ is an SNMF. This finishes the proof.
\end{proof}

\noindent Define the \emph{weight} of a set $A\subseteq [t]^n$ as 
$$w(A)=\sum_{i=0}^{(t-1)n}\frac{|A\cap L(i)|}{N(i)}.$$
We point out that $w(A)$ equals to the expectation of $|\mathbf{C}\cap A|$.

\begin{lemma}\label{lemma:strong_saturation}
Let $k$ be a positive integer, let $\delta>0$, and let $A\subseteq T$ such that $w(A)\geq k+\delta$. Then
$$\Delta(A)\geq \frac{\delta W^{-k}}{k!(k+\delta)}.$$
\end{lemma}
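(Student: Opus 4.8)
The plan is to run a short double counting argument on a random maximal chain. Let $\phi$ be the regular cover of $[t]^n$ induced by $f^*$ and let $\mathbf{C}$ be a $\phi$-random maximal chain, so that (as in Section~\ref{sect:chain_covers}) $\mathbf{C}$ meets every level of $[t]^n$ in exactly one element, $\mathbb{P}(x\in\mathbf{C})=1/N(r(x))$ for every $x$, and $\mathbb{E}\,|\mathbf{C}\cap A|=w(A)$. Put $Z:=|\mathbf{C}\cap A|$. Since $A\subseteq T$ and, for a comparable pair $x\prec y$ in $[t]^n$, we have $|y-x|=r(y)-r(x)$, I will study the random variable
$$Y:=\#\big\{(x,y)\,:\, x,y\in\mathbf{C}\cap A,\ x\prec y,\ r(y)-r(x)\ge k\big\}$$
and estimate $\mathbb{E}[Y]$ from both sides.

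For the lower bound, if $\mathbf{C}\cap A=\{z_0\prec\cdots\prec z_{Z-1}\}$ then the ranks $r(z_0)<\cdots<r(z_{Z-1})$ are distinct integers, so $r(z_{a+k})-r(z_a)\ge k$ for each admissible $a$; hence all $(Z-k)^+$ pairs $(z_a,z_{a+k})$ with $0\le a\le Z-1-k$ are counted by $Y$, giving $Y\ge (Z-k)^+\ge Z-k$ and therefore $\mathbb{E}[Y]\ge w(A)-k$. The key step is then the elementary inequality $w(A)-k\ge \tfrac{\delta}{k+\delta}\,w(A)$, which after clearing denominators is equivalent to $k\big(w(A)-k-\delta\big)\ge 0$, and so holds by the hypothesis $w(A)\ge k+\delta$. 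Thus $\mathbb{E}[Y]\ge \tfrac{\delta}{k+\delta}\,w(A)$.

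For the upper bound, I would expand $\mathbb{E}[Y]=\sum \mathbb{P}(x,y\in\mathbf{C})$, the sum running over pairs $x\prec y$ in $A$ with $r(y)-r(x)\ge k$. For each such pair, $x$ lies in a level within $tn^{3/5}$ of the middle (as $x\in T$), $y\in T$, and $|y-x|=r(y)-r(x)\ge k$, so Lemma~\ref{lemma:xy_in_C} applies and gives $\mathbb{P}(x,y\in\mathbf{C})\le k!\,W^k/N(r(x))$. Bounding the number of eligible $y$ above each $x$ by $\Delta(A)$ yields
$$\mathbb{E}[Y]\le k!\,W^k\,\Delta(A)\sum_{x\in A}\frac{1}{N(r(x))}=k!\,W^k\,\Delta(A)\,w(A).$$
Combining the two estimates and dividing by the positive quantity $k!\,W^k\,w(A)$ gives $\Delta(A)\ge \tfrac{\delta}{(k+\delta)\,k!\,W^k}=\tfrac{\delta W^{-k}}{k!\,(k+\delta)}$, as claimed.

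The argument is essentially a one-liner once set up, and the only two points that need care are: choosing to count comparable pairs of rank-gap at least $k$ (rather than all comparable pairs), which is exactly what makes Lemma~\ref{lemma:xy_in_C} applicable with the uniform factor $k!W^k$; and the cancellation of the common factor $w(A)$ on the two sides — it is this cancellation, together with the linear bound $\mathbb{E}[(Z-k)^+]\ge w(A)-k$ and the trivial reformulation $k(w(A)-k-\delta)\ge 0$, that produces the slightly unusual $\delta/(k+\delta)$ shape of the final estimate. No supersaturation input beyond Lemma~\ref{lemma:xy_in_C} and the identity $w(A)=\mathbb{E}\,|\mathbf{C}\cap A|$ is needed.
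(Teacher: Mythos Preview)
Your proof is correct and follows essentially the same approach as the paper: define the random count $Y$ of comparable pairs in $\mathbf{C}\cap A$ with rank gap at least $k$, lower-bound $\mathbb{E}[Y]$ by $w(A)-k\ge\frac{\delta}{k+\delta}w(A)$, upper-bound it via Lemma~\ref{lemma:xy_in_C} by $k!\,W^k\,\Delta(A)\,w(A)$, and cancel $w(A)$. The only cosmetic differences are that the paper exhibits the $Z-k$ pairs as $(z_0,z_k),\ldots,(z_0,z_{Z-1})$ rather than your $(z_a,z_{a+k})$, and it introduces an auxiliary graph $G$ on $A$ (edges being comparable pairs of gap $\ge k$) before bounding its maximum degree by $\Delta(A)$, whereas you bound by $\Delta(A)$ directly.
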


\begin{proof}
Let $G$ be the graph on the vertex set $A$ in which $x$ and $y$ are joined by an edge if $x\prec y$ and $|y-x|\geq k$. Let $\Delta$ be the maximum degree of $G$. Clearly, $G$ is a subgraph of the comparability graph induced by $A$, so we have $\Delta(A)\geq \Delta$.

Let $\mathbf{X}$ be the number of pairs $(x,y)$ such that $x,y\in \mathbf{C}\cap A$, $x\prec y$, and $|y-x|\geq k$. On one hand, we have $\mathbf{X}\geq |\mathbf{C}\cap A|-k$. Indeed, if the elements of $\mathbf{C}$ are $x_0\prec\dots \prec x_{r-1}$ with $r\geq k$, then $(x_0,x_k),\dots,(x_0,x_{r-1})$ are all such pairs. Therefore,
$$\mathbb{E}(\mathbf{X})\geq \mathbb{E}(|\mathbf{C}\cap A|-k)=w(A)-k\geq \frac{\delta}{k+\delta}w(A),$$
where the last inequality holds by our assumption $w(A)\geq k+\delta$. On the other hand, we have
$$\mathbb{E}(\mathbf{X})=\sum_{xy\in E(G)}\mathbb{P}(x,y\in \mathbf{C}).$$
If $xy\in E(G)$ with $x\prec y$ and $x\in L(i)\subseteq T$, we can use Lemma \ref{lemma:xy_in_C} to write
$$\mathbb{P}(x,y\in \mathbf{C})\leq \frac{k!W^k}{N(i)}.$$ As each vertex of $A$ sends out at most $\Delta$ edges, we get
$$\sum_{xy\in E(G)}\mathbb{P}(x,y\in \mathbf{C})\leq \sum_{i=0}^{(t-1)n}\Delta\cdot |L(i)\cap A|\cdot \frac{k! W^{k}}{N(i)}=\Delta\cdot w(A)\cdot k!\cdot W^{k}.$$
In conclusion,
$$\Delta\cdot w(A)\cdot k!\cdot W^{k}\geq \mathbb{E}(\mathbf{X})\geq \frac{\delta}{k+\delta}w(A).$$
Comparing the left- and right-hand sides, we get $\Delta\geq \frac{\delta W^{-k}}{k!(k+\delta)}.$
\end{proof}

\section{Containers}\label{sect:containers}

In this section, we put everything together to prove our main theorem.  Let $G$ be the comparability graph of $[t]^{n}$. In order to count independent sets in $G$, we employ the celebrated graph container method originating in the work of Kleitman and Winston\cite{KW82} and Sapozhenko~\cite{saphozenko05}. Let $T \subseteq [t]^n$ be the union of good levels of $[t]^n$ defined in~\eqref{eq:def_of_T}.
\begin{lemma}\label{lemma:container}
There exists $\mathcal{S}\subset 2^{T}$ and an assignment $\psi: \mathcal{S}\rightarrow 2^{T}$ such that 
\begin{itemize}
    \item[1.] $|S|\leq O(\alpha(t,n)\cdot \frac{(\log n)^2}{n})$ for every $S\in \mathcal{S}$,
    \item[2.] $|\psi(S)|\leq (1+\frac{1}{n})\cdot \alpha(t,n)$ for every $S\in \mathcal{S}$,
    \item[3.] for every independent set $I$ in $G[T]$, there exists $S\in \mathcal{S}$ such that $S\subseteq I\subseteq S\cup \psi(S)$.
\end{itemize}
\end{lemma}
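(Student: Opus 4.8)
This is a standard application of the Kleitman–Winston container algorithm, but tailored to the comparability graph $G[T]$ of the good levels. The key quantitative inputs are the two supersaturation estimates: Lemma~\ref{lemma:weak_saturation} (a $\Delta(A)=\Omega((|A|/\alpha)^2)$ bound when $|A|>\alpha(t,n)$) and Lemma~\ref{lemma:strong_saturation} (the sharper bound $\Delta(A)\geq \delta W^{-k}/(k!(k+\delta))$ in terms of the weight $w(A)$ when $A\subseteq T$ and $w(A)\geq k+\delta$), where $W=O(\tfrac{\log n}{n})$ is the maximum $f^*$-weight across good edges. First I would fix an ordering of the vertices of $[t]^n$ that is a linear extension refining the rank function, and run the container algorithm: given an independent set $I$, repeatedly pick the available vertex of maximum degree (in the graph induced on the current set of ``available'' vertices) that lies in $I$, add it to the fingerprint $S$, and delete it together with its non-neighbours-in-$A$… — more precisely the usual step: having chosen $v$, remove $v$ and all vertices that come before $v$ in the max-degree order, then remove all neighbours of $v$. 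The set $S$ of picked vertices is the fingerprint, and $\psi(S)$ is the final set of available vertices (which contains $I\setminus S$). Property 3 is automatic from the construction; the work is in bounding $|S|$ and $|\psi(S)|$.

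**Bounding the fingerprint size.** As long as the current available set $A$ has weight $w(A)$ bounded away from, say, $(1+\tfrac{1}{2n})\alpha(t,n)$ — actually it is cleaner to track $|A|$ via $|A|\leq (\text{const})\cdot w(A)\cdot \max_i N(i)$, or directly track $w(A)$ — Lemma~\ref{lemma:strong_saturation} with a suitable choice of $k\asymp \log n/\log\log n$ (so that $W^{-k}=n^{\Theta(k)}$ dominates $k!$) guarantees the chosen max-degree vertex has degree $\Delta\geq n^{\Omega(\log n/\log\log n)}$, hence removing it and its neighbours deletes a super-polynomial number of vertices; more usefully, each step removes a $\Delta$-sized chunk while the total is $O(t^n)$, and one converts this to a bound on the number of steps. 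The standard computation gives that the number of steps before $w(A)$ drops to $(1+\tfrac1n)\alpha(t,n)$ is $O\big(\tfrac{\alpha(t,n)\log(1/W)}{\,k\cdot \text{(gain per step)}}\big)$; plugging in $W=O(\log n/n)$ and optimising $k$ yields $|S|=O\!\big(\alpha(t,n)\cdot \tfrac{(\log n)^2}{n}\big)$, which is property~1. (One has to be a little careful that we always work inside $T$ so that the $W$-bound applies; vertices outside $T$ never arise since $G[T]$ only involves good levels.)

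**Bounding the container size.** Once $w(A)<(1+\tfrac{1}{n})\alpha(t,n)/\max_i N(i)$… — no: the point is that we stop the algorithm as soon as $w(A)\le 1+\tfrac1n$ is false to… Let me restate: we run the algorithm until the available set $A$ satisfies $w(A)\le 1+\tfrac{1}{n}$; then set $\psi(S)=A$. Since $|A|=\sum_i |A\cap L(i)|\le \max_i N(i)\cdot w(A)=\alpha(t,n)\cdot w(A)\le (1+\tfrac1n)\alpha(t,n)$ (using $\alpha(t,n)=\max_i N(i)$ from Lemmas~\ref{lemma:normalized_matching} and~\ref{lemma:LYM}), we get property~2. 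It remains to check that the stopping condition $w(A)\le 1+\tfrac1n$ is reached with $|S|$ still within the bound of property~1 — this is exactly what the iteration count above delivers, provided the per-step weight-decrease is quantified: removing a max-degree vertex $v\in L(i)$ together with its $\Delta$ neighbours decreases $w(A)$ by at least $\Delta/\max_i N(i) = \Delta/\alpha(t,n)$ in the worst case, but one actually needs the cleaner bookkeeping that when $w(A)=k+\delta$, a single step removes $\gtrsim \delta W^{-k}/(k!(k+\delta))$ vertices, i.e.\ decreases $w(A)$ by $\gtrsim \delta W^{-k}/(k!(k+\delta)\alpha(t,n))$, and one integrates this differential inequality over the dyadic ranges $w(A)\in[1+2^{-j-1},1+2^{-j}]$ for $j=1,\dots,O(\log n)$, choosing $k=k(j)$ growing with $j$.

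**Main obstacle.** The delicate point is the optimisation giving the exponent $2$ on $\log n$ in property~1: one must balance the choice of the ``jump'' parameter $k$ (which controls how far ahead in a chain we look, and enters as a factor $k!\,W^k$) against the target $1+\tfrac1n$ — with $W=\Theta(\log n/n)$, we need $k$ large enough that $k!\,W^k \ll 1$ but small enough that the $\tfrac{1}{k}$-type loss in the number of steps doesn't blow up — and the bookkeeping of $w(A)$ versus $|A|$ across many stages, each with its own $k$, must be done carefully so the losses telescope to only $O((\log n)^2/n)$ rather than a worse power. The rest (the existence of the algorithm, properties 2 and 3) is routine container-method machinery; I would also remark that $\mathcal{S}\subseteq 2^T$ and $\psi(S)\subseteq T$ hold by construction since the algorithm never leaves $T$.
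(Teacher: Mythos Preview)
Your overall framework---run the Kleitman--Winston algorithm on $G[T]$, use the two supersaturation lemmas to lower-bound the maximum degree at each step, and stop once the available set is small enough---is exactly the paper's approach, and Properties~2 and~3 do follow routinely from the construction. The problem is in your parameter analysis for Property~1, which as written would not go through.

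The main error is your handling of the integer $k$ in Lemma~\ref{lemma:strong_saturation}. That lemma requires $w(A)\ge k+\delta$, so in your dyadic ranges $w(A)\in[1+2^{-j-1},1+2^{-j}]$ (where $w(A)<2$) you are \emph{forced} to take $k=1$; having ``$k=k(j)$ growing with $j$'' is impossible there. With $k=1$ and $\delta=2^{-j-1}$ one gets $\Delta\ge\Omega(2^{-j}W^{-1})=\Omega(2^{-j}n/\log n)$; each subphase spans $\Theta(2^{-j}\alpha(t,n))$ vertices, hence contributes $O(\alpha(t,n)\log n/n)$ increment steps, and with $O(\log n)$ subphases this recovers the factor $(\log n)^2/n$. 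So the exponent~$2$ on $\log n$ does \emph{not} come from optimising $k$ against $k!W^k$; it is simply one $\log n$ from $W^{-1}=\Theta(n/\log n)$ times one $\log n$ from the number of dyadic subphases. (Minor related slip: you wrote ``$w(A)$ bounded away from $(1+\tfrac{1}{2n})\alpha(t,n)$'', but $w(A)$ is dimensionless.)

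For the earlier regime, taking $k\asymp\log n/\log\log n$ is over-engineered and your analysis of it is too vague to be a proof. The paper's argument is cleaner and uses only $k\in\{1,2\}$: it tracks $|A|$ rather than $w(A)$ and splits into three phases. When $|A|\ge n\cdot\alpha(t,n)$ it invokes the crude Lemma~\ref{lemma:weak_saturation} (no chain parameter at all) in dyadic subphases, contributing $O(\alpha(t,n)/n)$ steps. When $3\alpha(t,n)\le|A|<n\alpha(t,n)$ it applies Lemma~\ref{lemma:strong_saturation} with $k=2$, $\delta=1$ (legitimate since $w(A)\ge|A|/\alpha(t,n)\ge3$), giving $\Delta=\Omega(W^{-2})=\Omega((n/\log n)^2)$ and hence $O(\alpha(t,n)(\log n)^2/n)$ steps over the whole phase. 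The final phase $(1+\tfrac1n)\alpha(t,n)\le|A|<3\alpha(t,n)$ is the $k=1$ dyadic argument described above. Your sketch can be repaired along these lines, but as written the choice of $k$ is wrong in the critical regime and the source of the $(\log n)^2$ factor is misidentified.
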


\begin{proof}
We describe an algorithm, whose input is an independent set $I$, and whose output is a set $S$ and an assignment $\psi(S)$ such that properties 1.--3. are satisfied. Let $<$ be an arbitrary total ordering on the elements of $T$, and for $v\in T$, let $N(v)$ denote the set of neighbours of $v$ in $G[T]$.

\medskip
\hrule
\medskip
\noindent
\emph{The algorithm.}

Let $I$ be an independent set of $G[T]$, and let $A_0:=T$ and $S_0:=\emptyset$. For $i=1,2,\dots,$ the $i$-th step of the algorithm proceeds as follows. Consider the maximum degree vertices of $G[A_{i-1}]$, and let $v$ be the smallest among them with respect to the ordering $<$. Separate two cases.
\begin{description}
     \item[Case 1.]  $v\in I$. 
    
    Set $S_{i}:=S_{i-1}\cup \{v\}$ and $A_{i}:=A_{i-1}\setminus (N(v)\cup \{v\})$. In this case, call $i$ an \emph{increment} step.
    \item[Case 2.] $v\not\in I$.
    
    Set $S_{i}:=S_{i-1}$ and $A_{i}:=A_{i-1}\setminus \{v\}$.
\end{description}

If $|A_i|\leq (1+1/n)\cdot \alpha(t,n)$, then STOP the algorithm and OUTPUT $S:=S_i$ and $\psi(S):=A_i$. Otherwise, move to step $i+1$.
\medskip
\hrule
\medskip

Let $\mathcal{S}$ be the family of all the sets $S$ the algorithm might produce for some independent set $I$. It is a standard argument to show that $\psi(S)$ is well-defined, that is, if two inputs of the algorithm produce the same $S$, then they also produce the same $\psi(S)$.  
It is easy to see that the family $\mathcal{S}$ and the assignment $\psi:\mathcal{S}\rightarrow 2^{T}$ thus defined satisfy properties 2.\ and 3., so all that remains is to prove 1.\ holds as well.

In order to bound the size of $S\in \mathcal{S}$, we count the number of \emph{increment steps}. To do this, we divide the steps of the algorithm into \emph{three phases} according to the size of $A_i$ and the supersaturation result we apply.

\begin{description}
    \item[Phase 1.] $|A_i|\geq n\cdot \alpha(t,n)$.

    Divide this phase into further subphases. For an integer $\ell$ such that $\lfloor \log_2 n\rfloor<\ell<\lfloor\log_2 (|T|/\alpha(t,n))\rfloor$, we let subphase $\ell$ consist of those steps $i$ for which $\alpha(t,n)\cdot 2^{\ell}\leq |A_i|\leq \alpha(t,n)\cdot 2^{\ell+1}$. By Lemma \ref{lemma:weak_saturation}, we have that for such $i$ the maximum degree of $A_i$ satsifies
    $$\Delta(A_i)\geq \Omega\left(\left(\frac{|A_i|}{\alpha(t,n)}\right)^2\right)=\Omega\left(2^{2\ell}\right).$$
    Hence, if step $i+1$ is an increment step in subphase $\ell$, then  $|A_{i+1}|\leq |A_{i}|-\Omega(2^{2\ell})$. This shows that subphase $\ell$ contains at most $O(\alpha(t,n)/2^{\ell})$ increment steps.

    We conclude that  phase 1. includes at most 
    $$O(\alpha(t,n))\cdot \sum_{\ell\geq \lfloor \log_2 n\rfloor}2^{-\ell}=O\left(\frac{\alpha(t,n)}{n}\right)$$ increment steps in total.

    \item[Phase 2.] $n\cdot \alpha(t,n)>|A_i|\geq 3\cdot\alpha(t,n)$

    Apply Lemma \ref{lemma:strong_saturation} with $k=2$ and $\delta=1$. Note that if $|A_i|\geq 3\alpha(t,n)$, then $w(A)\geq 3$, so we get that $\Delta(A_i)\geq W^{-2}/6=\Omega((n/\log n)^2)$. Therefore, if step $i+1$ is an increment step, then  $|A_{i+1}|\leq |A_{i}|-\Omega((n/\log n)^2)$. Hence, there are at most $O(\alpha(t,n)(\log n)^2/n)$ increment steps in phase 2.

    \item[Phase 3.] $3\cdot \alpha(t,n)>|A_i|\geq (1+\frac{1}{n})\cdot \alpha(t,n)$

Again, we divide this phase into further subphases. For $\ell=0,\dots,\lceil \log_2 (2n)\rceil$, let subphase $\ell$ consist of those steps $i$ for which 
$$\alpha(t,n)\cdot\left(1+\frac{2^{\ell}}{n}\right)\leq |A_i|\leq \alpha(t,n)\cdot \left(1+\frac{2^{\ell+1}}{n}\right).$$
Suppose we are in subphase $\ell$. Applying Lemma \ref{lemma:strong_saturation} with $k=1$ and $\delta=2^{\ell}/n$, we have $w(A_i)\geq 1+\delta$ and hence 
$$\Delta(A_i)\geq \frac{\delta}{1+\delta}\cdot W^{-1}\geq \Omega\left(\frac{2^{\ell}}{\log n}\right).$$
Therefore, if step $i+1$ is an increment step, then  $|A_{i+1}|\leq |A_{i}|-\Omega(2^{\ell}/\log n)$. Hence, there are at most $O(\alpha(t,n)(\log n)/n)$ increment steps in subphase $\ell$.

In conclusion, as there are $O(\log n)$ subphases, phase 3. includes at most $O(\alpha(t,n)(\log n)^2/n)$ increment steps in total.
\end{description}

Summarizing the three phases, for every input $I$ our algorithm goes through at most $O(\alpha(t,n)(\log n)^2/n)$ increment steps, giving us the requisite uniform upper bound on the size of the output set $S$. This concludes the proof.
\end{proof}

\begin{proof}[Proof of Theorem \ref{thm:main}]
    First, let us count the number of antichains contained inside the collection of good levels $T$, and let us denote their number by $A(T)$. Let $\mathcal{S}\subset 2^{T}$ and $\psi:\mathcal{S}\rightarrow 2^{T}$ be given by Lemma \ref{lemma:container}. Each element of $\mathcal{S}$ is an antichain of size $O(\alpha(t,n)\cdot \frac{(\log n)^2}{n})$, so we can apply Theorem \ref{thm:small_antichains} with $k=\Theta(n/(\log n)^2)$ to get 
    $$|\mathcal{S}|=\exp_2\left[O\left(\frac{(\log n)^3}{n}\cdot \alpha(t,n)\right)\right].$$
    Every antichain $A$ of $T$ is contained in $S\cup \psi(S)$ for some $S\in  \mathcal{S}$, and 
    $$|S\cup \psi(S)|\leq \left(1+O\left(\frac{(\log n)^2}{n}\right)\right)\cdot \alpha(t,n).$$ Thus, the number of antichains in $T$ is at most
    \begin{align*}
        A(T)&\leq \sum_{S\in \mathcal{S}}\exp_2(|S\cup \psi(S)|)\leq |\mathcal{S}|\cdot\exp_2\left[\left(1+O\left(\frac{(\log n)^2}{n}\right)\right)\cdot\alpha(t,n)\right]\\
        &=\exp_2\left[\left(1+O\left(\frac{(\log n)^3}{n}\right)\right)\cdot\alpha(t,n)\right]
    \end{align*}

    Now let us count the number of antichains in $U=[t]^{n}\setminus T$, and let us denote this number by $A(U)$. Writing $\ell=\left\lceil \frac{(t-1)n}{2}-n^{3/5}t-1\right\rceil$, we have
    $$U=\bigcup_{i=0}^{\ell}L(i)\cup \bigcup_{i=(t-1)n-\ell}^{(t-1)n}L(i).$$
    Thus, by the LYM-property (which holds in $[t]^n$ by Lemma~\ref{lemma:LYM}), the width of $U$ is $N(\ell)$. We can use Lemma \ref{lemma:level_upperbound} to bound $N(\ell)$ as 
    $$N(\ell)\leq t^{n-1}\cdot e^{-\Omega(n^{1/5})}=O\left(\frac{\alpha(t,n)}{n}\right).$$
    Now we can get an upper bound on $A(U)$ by counting all antichains in $[t]^n$ of size at most $O(\alpha(t,n)/n)$, for which we can use  Theorem \ref{thm:small_antichains} again with $k=\Theta(n)$. Thus, we get
    $$A(U)\leq \exp_2\left[O\left(\frac{\log n}{n}\cdot \alpha(t,n)\right)\right].$$

    Finally, observe that every antichain $A\subseteq [t]^n$ can be written as $A=A_T\cup A_U$, where $A_T$ is an antichain in $T$, and $A_U$ is an antichain in $U$. Therefore,
    $$A(t,n)\leq A(T)\cdot A(U)\leq \exp_2\left[\left(1+O\left(\frac{(\log n)^3}{n}\right)\right)\cdot \alpha(t,n)\right].$$
\end{proof}

\section{Concluding remarks}

We finish our paper by proving a lower bound on $A(t,n)$.
\begin{claim}
For every  $n$ sufficiently large and $t\geq 5$,
$$\log_2 A(t,n)\geq (1+2^{-2n-2})\cdot \alpha(t,n).$$
\end{claim}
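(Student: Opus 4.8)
The plan is to beat the trivial bound $\log_2 A(t,n)\geq \alpha(t,n)=N(m)$, where $m=\lfloor (t-1)n/2\rfloor$, by producing many antichains that are not subsets of the middle level. I would look at antichains contained in $L(m)\cup L(m+1)$. Since distinct elements of a common level are incomparable, and an element of $L(m)$ is comparable to one of $L(m+1)$ precisely when the former is covered by the latter, these antichains are exactly the independent sets of the bipartite cover graph $H$ between $L(m)$ and $L(m+1)$.

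First I would lower bound the number of independent sets of $H$. For each $Z\subseteq L(m+1)$, every set of the form $Z\cup W$ with $W\subseteq L(m)\setminus N_H(Z)$ is independent, and distinct pairs $(Z,W)$ give distinct sets (one recovers $Z$ as the part of the set lying in $L(m+1)$), so $H$ has at least $\sum_{Z\subseteq L(m+1)}2^{N(m)-|N_H(Z)|}$ independent sets. Every vertex of $L(m+1)$ has at most $n$ down-neighbours (decrease one positive coordinate), so $|N_H(Z)|\leq n|Z|$, whence
$$A(t,n)\;\geq\;\sum_{Z\subseteq L(m+1)}2^{N(m)-n|Z|}\;=\;2^{N(m)}\,(1+2^{-n})^{N(m+1)}.$$
Taking logarithms and using $\log_2(1+x)\geq x$ for $x\in[0,1]$ gives $\log_2 A(t,n)\geq N(m)+N(m+1)\,2^{-n}$.

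Next I would show $N(m+1)\geq N(m)/2$ by double-counting the edges of $H$. Each $y\in L(m)$ has up-degree equal to the number of its coordinates that are strictly less than $t-1$; since at most $\lfloor m/(t-1)\rfloor\leq n/2$ of its coordinates can equal $t-1$, this up-degree is at least $n/2$, so $e(H)\geq (n/2)\,N(m)$. On the other hand each vertex of $L(m+1)$ has down-degree at most $n$, so $e(H)\leq n\,N(m+1)$; comparing the two yields $N(m+1)\geq N(m)/2$. Plugging this in, $\log_2 A(t,n)\geq N(m)\bigl(1+2^{-n-1}\bigr)\geq \alpha(t,n)\bigl(1+2^{-2n-2}\bigr)$, which is the claim. (In fact this argument uses neither $t\geq 5$ nor $n$ large and gives the slightly stronger factor $1+2^{-n-1}$; the hypothesis $t\geq 5$ is only there so that the additive gain $2^{-2n-2}\alpha(t,n)$ tends to infinity.)

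I do not anticipate a real obstacle. The two points needing mild care are the distinctness of the antichains $Z\cup W$ (immediate) and the estimate $N(m+1)\geq N(m)/2$; for the latter the coordinate-counting bound above is clean and self-contained, though one could alternatively combine log-concavity of the rank sequence (Lemma~\ref{lemma: log-concavity of hypergrid rank sequence}) with the symmetry $N(i)=N((t-1)n-i)$.
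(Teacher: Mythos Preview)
Your proof is correct and uses the same overall idea as the paper: count independent sets in the bipartite cover graph between $L(m)$ and $L(m+1)$ by first choosing a subset of $L(m+1)$ and then a subset of $L(m)$ avoiding its neighbourhood. The executions differ, however, and yours is cleaner. The paper fixes $|A|=k:=\lfloor N(m+1)\cdot 2^{-2n}\rfloor$ and bounds $\binom{N(m+1)}{k}\cdot 2^{N(m)-kn}$; this requires $k\geq 1$, which is precisely where the hypotheses $t\geq 5$ and $n$ large enter (via Lemma~\ref{lemma:middle_level}). You instead sum over all $Z\subseteq L(m+1)$ and collapse the sum to the closed form $2^{N(m)}(1+2^{-n})^{N(m+1)}$, avoiding any lower bound on level sizes. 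You also prove the sharper ratio $N(m+1)\geq N(m)/2$ by the up-degree count, whereas the paper uses only $N(m+1)\geq N(m)/n$. The upshot is that your argument gives the stronger factor $1+2^{-n-1}$ and is valid for all $t,n\geq 2$, so the extra hypotheses in the statement become genuinely unnecessary rather than merely cosmetic.
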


\begin{proof}
Let $m=\lfloor (t-1)n/2\rfloor$, and consider the bipartite subgraph $G$ of the comparability graph induced by the levels $L(m)$ and $L(m+1)$. Using that $G$ has  maximum degree $n$, we can show that $G$ contains at least $\exp_2((1+2^{-2n-2})\cdot \alpha(t,n))$ independent sets.

Indeed, let $A\subseteq L(m+1)$ be arbitrary, and let $N(A)\subseteq L(m)$ denote the neighborhood of $A$ in $G$. Then $|N(A)|\leq n|A|$ (and in particular $\vert L(m)\vert \leq n \vert L(m+1)\vert$). Let $k=\lfloor N(m+1)\cdot 2^{-2n}\rfloor$, and note that by Lemma~\ref{lemma:middle_level} we have $k\geq 1$ if $t\geq 5$ and $n$ is sufficiently large. Consider sets of the form $A\cup B$, where $A\subseteq L(m+1)$, $|A|=k$, and $B \subseteq L(m)\setminus N(A)$. Every such set $A\cup B$ is an independent set, and the number of such sets is at least
\begin{align*}
    \binom{N(m+1)}{k}\cdot 2^{N(m)-kn}&\geq \exp_2\left(N(m)-kn+k\log_2 \frac{N(m+1)}{k}\right)\\
     &\geq \exp_2(N(m)+2^{-2n-1}n\cdot N(m+1))\\
     &\geq \exp_2(N(m)\cdot (1+2^{-2n-1})).
\end{align*}
\end{proof}
A similar bound  for $t\in \{2,3,4\}$ can be established as well by using sharper estimates on the binomial coefficients, however, we omit the proof. It might be reasonable to conjecture that this lower bound is closer to optimal than our upper bound in Theorem \ref{thm:main}, that is, $\log_2 A(t,n)=(1+2^{-\Theta(n)})\cdot \alpha(t,n)$. This is known to be true if $t=2$ by the result of Korshunov \cite{korshunov80}. However, such a result, if true, is well beyond the methods of our paper.

\subsection*{Acknowledgements}
Research on this paper was partially supported by Swedish Research Council grant VR 2021-03687 and postdoctoral grant 213-0204 from the Olle Engkvist Foundation.

\section*{Appendix: proof of Lemma~\ref{lemma:derivatives}}

Here, we present the proof of Lemma~\ref{lemma:derivatives}. Recall from Section~\ref{sect:levels} that we are interested in the large $n$ behaviour of the function $f$ defined in~\eqref{eq: f-def}, and write $o_n(1)$ for any function tending to $0$ as $n$ tends to infinity.  Let $ q\in\left[-tn^{2/3}, tn^{2/3}\right]$, $k=\frac{(t-1)n}{2}-q$, and let $p\geq 0$ be such that 
$$\mu:=\frac{1}{\alpha}\cdot \sum_{j=0}^{t-1}j\cdot p^j=\frac{k}{n},$$
where $\alpha:=\sum_{j=0}^{t-1}p^j$. Recall that $p=1+O(\frac{q}{t^2 n})=1+o_n(1)\cdot \frac{1}{t}$ by Lemma~\ref{lemma:p}, and thus $\alpha=(1+o_n(1))\cdot t$. Write $Y_1:=X_1-k/n$, where $X_1$ is the random variable defined in~\eqref{eq:def_of_X_i}; we thus have
$$\mathbb{P}(Y_1=j-\mu)=\alpha^{-1}\cdot p^j,\ \ \  \mbox{ for } j\in [t],$$ and $\mathbb{E}(Y_1)=0$. As noted in Section~\ref{sect:levels} the characteristic function of $Y_1$ is  
$$\varphi\left(y\right)=\mathbb{E}e^{iyY_1}= \frac{1}{\alpha}\sum_{j=0}^{t-1} p^je^{iy\left(j-\mu\right)},$$
and we defined a Hermitian function $f$ by
\[
f\left(x\right):=\frac{1}{2\pi}\int_{-\pi}^{\pi}e^{-ixy}\varphi\left(y\right)^{n}dy.
\]
With these definitions, the statement of Lemma~\ref{lemma:derivatives} is as follows, where $\Vert\cdot\Vert_{\infty}$ denotes the standard supremum norm.
\begin{lemma}\label{lemma:appendix}
We have  
\begin{enumerate}
\item $f\left(x\right)=\Theta\left(\frac{1}{t\sqrt{n}}\right)$ for $x\in\left[-t,t\right]$,
\item $\left\Vert f'\right\Vert _{\infty}=O\left(\frac{1}{t^{2}n}\right)$,
\item $\left\Vert f''\right\Vert _{\infty}=O\left(\frac{1}{t^{3}n^{3/2}}\right)$. 
\end{enumerate}
\end{lemma}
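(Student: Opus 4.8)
The plan is to estimate the three quantities by analyzing the characteristic function $\varphi(y)^n$ and applying a local central limit theorem strategy: decompose the integral $\int_{-\pi}^{\pi}$ into a ``central'' regime $|y|\le \delta$ for a suitable $\delta$ (a natural choice being $\delta = n^{-1/2+\varepsilon}$ or of order $1/(t\sqrt n)$ rescaled), where $\varphi(y)^n$ is well-approximated by a Gaussian, and a ``tail'' regime $\delta < |y| \le \pi$, where $|\varphi(y)|$ is bounded away from $1$ so that $|\varphi(y)|^n$ is exponentially small. For the tail, the key input is that $Y_1$ takes at least two distinct values with probabilities bounded below (since $p = 1+o_n(1)\cdot t^{-1}$ and $\alpha = (1+o_n(1))t$, each atom $\alpha^{-1}p^j$ is $\Theta(1/t)$), so a standard computation gives $|\varphi(y)| \le 1 - \Omega(\sigma^2 y^2)$ for $|y|$ not too large and $|\varphi(y)| \le 1 - \Omega(1/t^2)$ uniformly on $[\delta,\pi]$ after accounting for the spacing of the support; here $\sigma^2 = \operatorname{Var}(Y_1) = \operatorname{Var}(X_1) = \Theta(t^2)$, which I would compute explicitly from the geometric-type distribution (for $p$ near $1$ it is $(t^2-1)/12 + o(t^2)$). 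Thus the tail contributes $O(e^{-\Omega(n/t^2 \cdot \delta^2 t^2)}) = o(1/(t\sqrt n))$ with room to spare, and similarly for the $f'$ and $f''$ tail integrals after pulling out the polynomial factors $y$ and $y^2$.

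For the central regime, I would Taylor-expand $\log\varphi(y) = -\tfrac12\sigma^2 y^2 + O(t^3|y|^3)$ using $\mathbb{E}(Y_1)=0$ and bounds on the third moment $\mathbb{E}|Y_1|^3 = O(t^3)$, so that $\varphi(y)^n = e^{-n\sigma^2 y^2/2}(1 + O(n t^3 |y|^3))$ on $|y| \le \delta$. Substituting $y = u/\sqrt{n\sigma^2}$ (so that $\sqrt{n}\sigma = \Theta(t\sqrt n)$), the Gaussian integral $\frac{1}{2\pi}\int e^{-ixy}e^{-n\sigma^2 y^2/2}\,dy = \frac{1}{\sqrt{2\pi n\sigma^2}}e^{-x^2/(2n\sigma^2)}$ gives the main term of size $\Theta(1/(t\sqrt n))$; for $x \in [-t,t]$ we have $x^2/(n\sigma^2) = O(1/n) = o(1)$, so $e^{-x^2/(2n\sigma^2)} = \Theta(1)$ and part 1 follows once the error terms are shown to be lower order. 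The error from the cubic term, after rescaling, is $O\big(nt^3 \cdot (n\sigma^2)^{-3/2}\big) \cdot \frac{1}{\sqrt{n\sigma^2}} = O(1/(t\sqrt n)) \cdot O(1/\sqrt n)$, hence negligible. For parts 2 and 3, the extra factors of $-iy$ and $-y^2$ in the integrands contribute, after the same rescaling, additional factors of $(n\sigma^2)^{-1/2} = \Theta(1/(t\sqrt n))$ and $(n\sigma^2)^{-1} = \Theta(1/(t^2 n))$ respectively; combined with the baseline $\Theta(1/(t\sqrt n))$ this yields $\|f'\|_\infty = O(1/(t^2 n))$ and $\|f''\|_\infty = O(1/(t^3 n^{3/2}))$. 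Crucially for the supremum norms, since we bound $|e^{-ixy}| \le 1$ uniformly, the estimates hold for all $x\in\mathbb{R}$, not just $x\in[-t,t]$.

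The main obstacle I anticipate is making the tail estimate genuinely uniform in $t$ while keeping the constants independent of $t$ and $n$: one must show that on $[\delta, \pi]$ the modulus $|\varphi(y)|$ is bounded away from $1$ by a quantity that, raised to the $n$-th power, still beats $1/(t\sqrt n)$. Because the support of $X_1$ has $t$ points spaced by $1$ inside an interval of length $t-1$, for $|y|$ comparable to $\pi$ the sum $\sum_j p^j e^{iy(j-\mu)}$ can exhibit near-cancellation of a different flavor than at small $y$, and one has to be careful that $|\varphi(y)|$ does not come back close to $1$ for $y$ near $\pm\pi$ (there is no lattice-span issue here since $\gcd$ of the support differences is $1$, so $|\varphi(y)| < 1$ for all $0 < |y| \le \pi$, but the quantitative bound needs care). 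I would handle this by writing $|\varphi(y)|^2 = \alpha^{-2}\sum_{j,\ell}p^{j+\ell}\cos(y(j-\ell))$ and isolating the $|j-\ell|=1$ terms to get $|\varphi(y)|^2 \le 1 - c(1-\cos y)/t$ for an absolute $c>0$, which suffices since $1 - \cos y \ge c'\delta^2$ on $[\delta,\pi]$; then $|\varphi(y)|^n \le (1 - \Omega(\delta^2/t))^n$, and choosing $\delta$ so that $n\delta^2/t \gg \log(t\sqrt n)$ — e.g. $\delta$ a small constant, or $\delta = t\, n^{-1/2}\log n$ — makes this superpolynomially small. The rest is bookkeeping: substituting, expanding the logarithm, and collecting error terms, all of which is routine once the regimes are set up correctly.
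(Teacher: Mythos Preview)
Your overall plan---split $\int_{-\pi}^{\pi}$ into a central window where $\varphi(y)^n$ is approximately Gaussian and a tail where $|\varphi(y)|^n$ is negligible---is the right one, and is essentially what the paper does. The central-regime analysis you sketch is fine. The problem is the tail bound.

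You propose to control the tail via $|\varphi(y)|^2 \le 1 - c(1-\cos y)/t$, obtained by keeping only the $|j-\ell|=1$ terms in $\alpha^{-2}\sum_{j,\ell}p^{j+\ell}\cos(y(j-\ell))$. This bound is correct but far too weak for the regime the paper cares about, namely $t$ arbitrarily large compared to $n$. It gives $|\varphi(y)|^n \le \exp\bigl(-\Omega(n(1-\cos y)/t)\bigr)$, which is close to $1$ whenever $t\gg n$. Your two suggested cutoffs are incompatible: the Taylor expansion $\log\varphi(y)=-\tfrac12\sigma^2y^2+O(t^3|y|^3)$ is only usable for $|y|=O(1/t)$, while your tail condition $n\delta^2/t\gg\log(t\sqrt n)$ forces $\delta\gg\sqrt{t/n}$. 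These two ranges meet only when $t^3=O(n)$, so as written the argument fails for $t\ge n^{1/3}$.

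The missing idea is that $|\varphi(y)|$ is not merely bounded away from $1$ on the tail, it is actually \emph{small}: summing the geometric series gives
\[
\varphi(y)=\frac{1}{\alpha}\cdot\frac{1-p^te^{ity}}{1-pe^{iy}}\cdot e^{-i\mu y},
\]
so $|\varphi(y)|\le \frac{2}{\alpha\,|1-pe^{iy}|}=O\bigl(1/(t|y|)\bigr)$ for $|y|\le\pi/2$. At the boundary $|y|=C/t$ of the central window this is already a constant strictly less than $1$ (the paper arranges $2.25/2.26$), so $|\varphi(y)|^n$ decays exponentially in $n$ \emph{uniformly in $t$}, and the integrals $\int |y|^j|\varphi(y)|^n\,dy$ over the tail are easily seen to be $O(t^{-(j+1)}n^{-(j+1)/2})$. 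Equivalently, in your $|\varphi|^2$ expansion you must keep \emph{all} the terms, not just $|j-\ell|=1$: then $1-|\varphi(y)|^2 \ge 1 - O(1/(t^2y^2))$ and you recover the same bound. Once the tail is handled this way, your central-regime calculation goes through and yields parts 2 and 3; for part 1 the paper takes a slightly different route (Berry--Esseen to locate one integer $x_0$ with $f(x_0)=\Theta(1/(t\sqrt n))$, then transfer to all of $[-t,t]$ via the bound on $\|f'\|_\infty$), but your direct Gaussian approach would also work after the tail fix.
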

\begin{proof}
We lay the ground for a proof by providing a number of estimates on the value of $\phi(y)$ at various points in the interval $[0, \pi]$. 

\begin{claim} \label{claim:First bound} There exists $\delta > 0$ such that the following holds for every sufficiently large $n$. If $y \in \left[-\frac{2.26}{t},\frac{2.26}{t}\right]$, then $$|\varphi(y)| < 1 - \delta y^2t^2\leq e^{-\delta y^2 t^2}.$$
\end{claim}
\begin{proof}
The function $\varphi(y)$ can be written as 
$$\varphi(y) =\mathbb{E}e^{iyY_1}= \mathbb{E}\left[\sum_{m = 0}^{\infty} \frac{\left(iyY_1\right)^m}{m!}\right]= \sum_{m = 0}^{\infty} \frac{\mathbb{E}\left((Y_1)^m\right)}{m!} (iy)^m.$$
\noindent Let us write $a_{i} = \frac{i^m\mathbb{E}\left((Y_1)^m\right)}{ m!}$ for the coefficient of $y^m$ in the power series expression for $\varphi(y)$ thus obtained.  It is easy to check that  $a_0 = 1$ and $a_1 = 0$. Let us estimate $a_2$.

Using that $p=1+o_n(1)\cdot\frac{1}{t}$ and $\mu= \frac{t-1}{2}(1+o_n(1))$, we have
$$\mathbb{E}\left((Y_1)^2\right) = \alpha^{-1}\cdot \sum_{j = 0}^{t-1} (j-\mu)^2p^j=\frac{t^2-1}{12}(1+o_n(1)).$$
Thus $a_2=-\frac{t^2-1}{24}(1+o_n(1))$. By a similar argument, we get $a_3=\mathbb{E}((Y_1)^3) = o_n(1)\cdot t^3$. Hence we have $|a_3y^3| = o_n(1)\cdot |ty|^2$ provided that $|y| \leq \frac{3}{t}$.

Now let us focus on the the rest of the terms. By the choice of $k$, we have $\mu \in [0.999,1.001]\cdot \frac{t-1}{2}$ for $n$ sufficiently large. Thus, $Y_1\in [-1.001\cdot \frac{(t-1)}{2},1.001\cdot \frac{(t-1)}{2}]$. Writing $\beta_1 = \frac{1.001}{2}$, we thus have the trivial upper bound $\mathbb{E}\left(|(Y_1)^m|\right) \leq \left(1.001\cdot \frac{(t-1)}{2}\right)^m$. Thus for $m \geq 4$ we have 
$$\sum_{m = 4}^{\infty} |a_m y^m| \leq \frac{(\beta_1 y(t-1))^4}{24} \sum_{m = 0}^{\infty} \frac{|\beta_1 yt|^m}{(m+4)!/24}.$$ Since $(m+4)! \geq 24m!$ for every $m \geq 0$, it follows that $\sum_{m = 0}^{\infty} \frac{|\beta_1 yt|^m}{(m+4)!/24} \leq e^{|\beta_1 yt|}$, and thus we have 
$$\sum_{m = 4}^{\infty} |a_my^m| \leq \frac{(y(t-1))^2}{24} \cdot (yt)^2\beta_1^4 \cdot e^{|\beta_1 yt|}$$
 Since $|yt| \leq 2.26$, it follows that $(yt)^2 \beta_1^4 \cdot e^{|\beta_1 yt|} \leq 0.99$. Let $\delta_1 = \frac{1}{1000}$, then $$\sum_{m=4}^{\infty} \left|a_m y^m\right| < \frac{0.99}{24} y^2(t-1)^2 < \frac{1}{24}(1 - 3\delta_1) y^2(t^2-1).$$
For $n$ sufficiently large, we also have $1 - a_{2}y^2 \leq 1 - \frac{t^2-1}{24}(1 - \delta_1)y^2$ for every $y \in \left[-\frac{3}{t},\frac{3}{t}\right]$.
As we also have $|a_{3}y^3| \leq \frac{\delta_1}{24} y^2(t^2-1)$ if $n$ is sufficiently large, we can combine these bounds to get 
$$|\varphi(y)| \leq |1-a_2y^2|+|a_3 y^3|+\sum_{m=4}^{\infty}|a_m y^m|\leq 1 - \frac{\delta_1}{24}y^2(t^2-1)\leq 1 - \frac{\delta_1}{48}y^2t^2.$$ Thus we can set  $\delta = \frac{1}{48000}$, which completes the proof. 
\end {proof}

\begin{claim}\label{claim:pi^2 bound}
Let $p > 0$ be a constant chosen so that $\frac{8(1+p^2)}{\pi^2} < 2p$. Then for all $y \in \left[-\frac{\pi}{2},\frac{\pi}{2}\right]$ we have $1 + p^2 - 2p\cos(y) \geq \frac{4(1+p^2)}{\pi^2}y^2$
\end{claim}
\begin{proof} 
Consider the function $a(y) = 1+p^2 - 2p\cos(y) - \frac{4(1+p^2)}{\pi^2}y^2$. Note that $a$ is an even function whose first and second derivatives are given by $a'(y) = 2p\sin(y) - \frac{8(1+p^2)y}{\pi^2}$ and $a''(y) = 2p\cos(y) - \frac{8(1+p^2)}{\pi^2}$ respectively.
Thus there exists $\alpha \in \left[0,\frac{\pi}{2}\right]$ so that $a'(y) \geq 0$ for $y \in \left[0,\alpha\right]$ while $a'(y) \leq  0$ for $y \in \left[\alpha,\frac{\pi}{2}\right]$. Thus $a$ attains its minimum at $y = 0$ or $y = \frac{\pi}{2}$. Since $a(0) = (1-p)^2 \geq 0$ and $a\left(\frac{\pi}{2}\right) = 0$, it follows that $1 + p^2 - 2p\cos(y) \geq \frac{4(1+p^2)}{\pi^2}y^2$.
\end{proof}
 
\begin{claim} \label{claim:second bound}
For $|y| \in \left(0,\frac{\pi}{2}\right]$, we have $|\varphi(y)| \leq \frac{2.25}{|y|t}$, while for $|y|\in \left[\frac{\pi}{2},\pi\right]$, $|\varphi(y)|\leq \frac{1.5}{t}$ holds.
\end{claim}
\begin{proof}
Write $\varphi(y)$ as 
$$\varphi(y) = \frac{1}{\alpha} \cdot \frac{1-p^te^{ity}}{1-pe^{iy}} \cdot e^{-i\mu y}.$$
Here, we have $|\frac{1}{\alpha}|\leq \frac{1}{0.99 t}$ for $n$ sufficiently large. To bound the size of the second term, observe that for $n$ sufficiently large $4(1+p^2)=4(1+o_n(1))<\pi^2$, and thus that $|1-pe^{iy}| = \sqrt{1+p^2 - 2p\cos(y)} \geq \frac{2\sqrt{1+p^2}\cdot |y|}{\pi}$ by Claim \ref{claim:pi^2 bound}. Hence, 
$$\left|\frac{1-p^te^{ity}}{1-pe^{iy}}\right|\leq \frac{2}{|1-pe^{iy}|} \leq \frac{\pi}{\sqrt{(1+p^2)}\cdot|y|}.$$ Since for $n$ sufficiently large we have $0.999 < p < 1.001$, it follows that 
$$|\varphi(y)| \leq \frac{\pi}{\sqrt{1 + 0.999^2}\cdot |y| \cdot 0.99t} < \frac{2.25}{|y|t}.$$
For the second part, note that if $|y|\in \left[\frac{\pi}{2},\pi\right]$, then $|1-pe^{iy}|\geq |1-e^{iy}|-0.01\geq 1.4$. Hence,
$$|\phi(y)|\leq \frac{1}{0.99t}\cdot \frac{2}{1.4}\leq \frac{1.5}{t}.$$
\end{proof}

\begin{proof} [Proof of Lemma~\ref{lemma:appendix} part 2] 
 We have 
\[
f'\left(x\right)=\frac{1}{2\pi}\int_{-\pi}^{\pi}-iye^{-ixy}\varphi\left(y\right)^{n}dy,
\]
so 
$$||f'||_{\infty}\leq \int_{-\pi}^{\pi}|y|\cdot |\varphi\left(y\right)|^{n}dy.$$
Assume that $n$ is sufficiently large. Since $\vert y \phi(y)\vert =\vert (-y)\phi(-y)\vert$, it is enough to bound the contribution to this integral from the interval $y\in [0,\pi]$. Divide $[0,\pi]$ into three intervals, $I_1=[0,\frac{2.26}{t}]$, $I_2=[\frac{2.26}{t},\frac{\pi}{2}]$, and $I_3=[\frac{\pi}{2},\pi]$. By Claim \ref{claim:First bound}, there exists some $\delta>0$ such that
$$\int_{I_1}|y|\cdot |\varphi\left(y\right)|^{n}dy\leq \int_{I_1} y\cdot e^{-\delta ny^2t^2} dy = \frac{1}{2\delta nt^2}\left(1 - e^{-n \delta (2.26)^2}\right) = O\left(\frac{1}{nt^2}\right).$$
Further, by the first part of  Claim \ref{claim:second bound} we have
\begin{align*}
    \int_{I_2}|y|\cdot |\varphi\left(y\right)|^{n}dy&\leq \int_{I_2} y \cdot \left(\frac{2.25}{yt}\right)^n dy\\
    &=\left(\frac{2.25}{t}\right)^n\cdot \int_{I_2} y^{-(n-1)}dy\\
    &=\frac{2.25^2}{(n-2)t^2} \left(\left(\frac{2.25}{2.26}\right)^{n-2} - \left(\frac{2\cdot 2.25}{t\pi}\right)^{n-2}\right)\\
    &=O\left(\frac{1}{nt^2}\right).
\end{align*}
Finally, using the second part of Claim \ref{claim:second bound}, we get
$$\int_{I_3}|y|\cdot |\varphi\left(y\right)|^{n}dy\leq \int_{I_3} y\cdot \left(\frac{1.5}{t}\right)^n dy = O\left(\frac{1}{nt^2}\right).$$
This concludes the proof.
\end{proof}

\begin{proof}[Proof of Lemma~\ref{lemma:appendix} part 3]
The proof of this is almost the same as the proof of part 2. However, as there are some small nuances, let us present the complete proof. We have
\[
f''\left(x\right)=\frac{1}{2\pi}\int_{-\pi}^{\pi}y^2 e^{-ixy}\varphi\left(y\right)^{n}dy,
\]
so 
$$||f''||_{\infty}\leq \int_{-\pi}^{\pi}|y|^2\cdot |\varphi\left(y\right)|^{n}dy.$$
Again, we only consider the interval $[0,\pi]$, and we divide it into three parts $I_1,I_2,I_3$, in exactly the same way as before. 
By subdividing $I_1$ into $\lceil 2.25\sqrt{n}\rceil $ intervals of length at most $\frac{1}{t\sqrt{n}}$ and applying Claim~\ref{claim:First bound}, we get the estimate  
\begin{align*}
    \int_{I_1}|y|^2\cdot |\varphi\left(y\right)|^{n}dy&\leq \int_{I_1} y^2\cdot e^{-\delta ny^2t^2} dy \\
    &\leq \sum_{j=1}^{\lceil 2.25\sqrt{n}\rceil} \frac{1}{t\sqrt{n}} \left(\frac{j}{t\sqrt{n}}\right)^2 e^{-\delta  n \left((j-1)\cdot/t\sqrt{n}\right)^2 t^2}\\
    &=\frac{1}{t^3n^{3/2}}\sum_{j=1}^{\lceil 2.25\sqrt{n}\rceil} j^2e^{-\delta(j-1)^2}=O\left(\frac{1}{t^3n^{3/2}}\right).
\end{align*}
Next, using the first part of  Claim \ref{claim:second bound}, we have
\begin{align*}
    \int_{I_2}|y|^2\cdot |\varphi\left(y\right)|^{n}dy&\leq \int_{I_2} y^2 \cdot \left(\frac{2.25}{yt}\right)^n dy\\
    &=\left(\frac{2.25}{t}\right)^n\cdot \int_{I_2} y^{-(n-2)}dy\\
    &=\frac{2.25^2}{(n-3)t^3} \left(\left(\frac{2.25}{2.26}\right)^{n-3} -\cdot \left(\frac{2\cdot 2.25}{t\pi}\right)^{n-3}\right)\\
    &=O\left(\frac{1}{t^3n^{3/2}}\right).
\end{align*}
Finally, using the second part of Claim \ref{claim:second bound}, we get
$$\int_{I_3}|y|^2\cdot |\varphi\left(y\right)|^{n}dy\leq \int_{I_3} y^2\cdot \left(\frac{1.5}{t}\right)^n dy = O\left(\frac{1}{t^3 n^{3/2}}\right).$$
\end{proof}

\begin{proof}[Proof of Lemma~\ref{lemma:appendix} part 1]
Finally, let us prove 1. Let $Y_1,\dots,Y_n$ be i.i.d. copies of $Y_1$, and let $X=Y_1+\dots+Y_n$. Recall from Section~\ref{sect:levels} that for every integer $j$, $f(j)=\mathbb{P}(X=j)$. We have $\mathbb{E}(Y_1)=0$, $\sigma^2=\mathbb{E}((Y_1)^2)=\Theta(t^2)$ and $\rho=\mathbb{E}((Y_1)^3)\leq t^3$. Therefore, by the Berry--Esseen theorem, for every $x$,
$$\left|\mathbb{P}\left(X\leq x\sqrt{n}\sigma\right)-\Phi(x)\right|\leq O\left(\frac{\rho}{\sigma^3 \sqrt{n}}\right)\leq \frac{C}{\sqrt{n}}$$
for some absolute constant $C>0$, where $$\Phi(x)=\frac{1}{\sqrt{2\pi}}\int_{-\infty}^{x}e^{-u^2/2}du$$ is the cumulative distribution function of the standard normal distribution. Choose a constant $c>10$ such that if $x_1=-\frac{c}{\sqrt{n}}$ and $x_2=\frac{c}{\sqrt{n}}$, then $\Phi(x_2)-\Phi(x_1)\geq \frac{3C}{\sqrt{n}}$. Then
\begin{align*}
    \mathbb{P}\left(x_1\sqrt{n}\sigma \leq X\leq x_2\sqrt{n}\sigma\right)\geq \Phi(x_2)-\Phi(x_1)
    -\sum_{i=1}^2\left|\mathbb{P}(X\leq x_i\sqrt{n}\sigma)-\Phi(x_i)\right| \geq \frac{C}{\sqrt{n}}.
\end{align*}
Similarly, we can prove that 
$$\mathbb{P}(x_1\sqrt{n}\sigma \leq X\leq x_2\sqrt{n}\sigma)=O\left(\frac{1}{\sqrt{n}}\right).$$
As $X$ takes values in $\mathbb{Z}$, the lower bound implies that there exists some $x_0\in [x_1\sqrt{n}\sigma,x_2\sqrt{n}\sigma]=[-c\sigma,c\sigma]$ such that 
$$f(x_0)=\mathbb{P}(X=x_0)\geq \frac{C}{1+2c\sigma\sqrt{n}}=\Omega\left(\frac{1}{t\sqrt{n}}\right).$$
Now let $x\in [-t,t]\subseteq [-c\sigma,c\sigma]$. By Taylor's theorem, there exists some $\xi\in [-t,t]$ such that
$$\left|f(x) - f(x_0)\right| = \left| (x - x_0)f'(\xi)\right| = O\left(\frac{1}{tn}\right) = O\left(\frac{f(x_0)}{\sqrt{n}}\right),$$
where the second equality holds since $||f'||_{\infty}=O\left(\frac{1}{nt^2}\right)$, as proved in part 2. In conclusion, for every $x \in [-t,t]$ we have $f(x) = \Theta(f(x_0))=\Theta(\frac{1}{t\sqrt{n}})$, as desired.
\end{proof}

\end{proof}

\end{document}